%
%
%

\documentclass[12pt]{amsart}
\usepackage{}
\usepackage{amsmath,amssymb,txfonts}
\usepackage{amssymb}
\usepackage{amsxtra}
\usepackage{amsthm, color}
\usepackage{txfonts}
\usepackage{graphicx}
\usepackage{times}



\headsep 6mm
\footskip 11mm
\baselineskip 4.5mm

\textwidth 158truemm
\textheight 237truemm
\oddsidemargin 5.3mm
\evensidemargin 5.3mm
\topmargin -9mm
\headsep 6mm
\footskip 11mm
\baselineskip 4.5mm

\def\rr{{\mathbb R}}
\def\rn{{{\rr}^n}}

\def\nn{{\mathbb N}}
\def\zz{{\mathbb Z}}

\def\cd{{\mathcal D}}

\def\ch{{\mathcal H}}

\def\cs{{\mathcal S}}

\def\fz{\infty}
\def\az{\alpha}

\def\supp{{\mathop\mathrm{\,supp\,}}}
\def\dist{{\mathop\mathrm{\,dist\,}}}

\def\loc{{\mathop\mathrm{\,loc\,}}}

\def\BMO{{\mathop\mathrm{\,BMO\,}}}

\def\lz{\lambda}

\def\ez{\epsilon}

\def\bz{\beta}

\def\kz{{\kappa}}
\def\tz{\theta}

\def\wz{\widetilde}

\def\ls{\lesssim}

\def\laz{\langle}
\def\raz{\rangle}

\def\r{\right}
\def\lf{\left}

\def\XXint#1#2#3{{\setbox0=\text{$#1{#2#3}{\int}$ }
\vcenter{\text{$#2#3$ }}\kern-.6\wd0}}

      \newtheorem{theorem}{Theorem}[section]
      \newtheorem{definition}[theorem]{Definition}

      \newtheorem{proposition}[theorem]{Proposition}
      \newtheorem{lemma}[theorem]{Lemma}

      \newcommand{\lt}[1]{[ {#1}] \lower.3ex\text{$_{t}$}}




\begin{document}
\subjclass[2010]{35R11, 47G40, 49Q15}

\title[Intrinsic nature of the Stein-Weiss $H^1$-inequality]
{Intrinsic nature of the Stein-Weiss $H^1$-inequality}

\author{Liguang Liu}
\address{School of Mathematics,
Renmin University of China,
Beijing 100872, China}
\email{liuliguang@ruc.edu.cn}

\author{Jie Xiao}
\address{Department of Mathematics and Statistics,
Memorial University, St. John's, NL A1C 5S7, Canada}
\email{jxiao@math.mun.ca}


\thanks{
LL was supported by the National Natural Science Foundation of China
(\# 11771446);
JX was supported by NSERC of Canada (\# 202979463102000).
}

\subjclass[2010]{{31B15, 42B30, 42B37, 46E35}}

\date{\today}

\keywords{}

\begin{abstract}
This paper explores the intrinsic nature of the celebrated Stein-Weiss $H^1$-inequality
	$$
	\|I_s u\|_{L^\frac{n}{n-s}}\lesssim \|u\|_{L^1}+\|\vec{R}u\|_{L^{1}}=\|u\|_{H^1}
	$$
through	the tracing and duality laws based on Riesz's singular integral operator $I_s$. We discover that $f\in I_s\big([\mathring{H}^{s,1}_{-}]^\ast\big)$
if and only if $\exists\ \vec{g}=(g_1,...,g_n)\in \big(L^\infty\big)^n$
such that $f=\vec{R}\cdot\vec{g}=\sum_{j=1}^n R_jg_j$ in $\mathrm{BMO}$ (the John-Nirenberg space introduced in their 1961 {\it Comm. Pure Appl. Math.} paper \cite{JN})
where $\vec{R}=(R_1,...,R_n)$ is the vector-valued Riesz transform - this characterizes the Riesz transform part $\vec{R}\cdot\big(L^\infty\big)^n$ of Fefferman-Stein's decomposition  (established in their 1972 {\it Acta Math} paper \cite{FS}) for $\mathrm{BMO}=L^\infty+\vec{R}\cdot\big(L^\infty\big)^n$ and yet indicates that $I_s\big([\mathring{H}^{s,1}_-]^\ast\big)$
is indeed a solution to Bourgain-Brezis' problem under $n\ge 2$:
``What are the function spaces $X, W^{1,n}\subset X\subset \mathrm{BMO}$,
such that every $F\in X$ has a decomposition $F=\sum_{j=1}^n R_j Y_j$ where $Y_j\in L^\infty$?" (posed in their 2003 {\it J. Amer. Math. Soc.} paper \cite{BB}).
\end{abstract}

\maketitle

\tableofcontents

\arraycolsep=1pt
\numberwithin{equation}{section}

\section{Introduction}\label{s1}

\subsection{The Stein-Weiss $H^p$-inequalities}\label{s11}
For $(n,p)\in\mathbb N\times[1,\infty)$, denote by $H^p$ the real Hardy space on the Euclidean space $\rn$, consisting  of all functions $f$ in the Lebesgue space $L^p$ with
$$\|u\|_{H^p}=\|u\|_{L^p}+\|\vec{R}u\|_{L^p}<\infty,$$
where $$\vec{R}=(R_1,\dots, R_n)$$  is the vector-valued Riesz transform on $\rn$, with
$$
\vec{R}u=(R_1u,...,R_nu)\ \ \&\ \
R_ju(x)=\left(\frac{\Gamma(\frac{n+1}{2})}{\pi^{\frac{n+1}{2}}}\right)\;\text{p.v.}\int_{\mathbb R^n}\frac{x_j-y_j}{|x-y|^{n+1}}u(y)\,dy
\ \ \textup{a.\,e.}\ x\in\rn$$
and $\Gamma$ being the Gamma function. Also, for a vector-valued function
$$\vec{f}=(f_1,\dots, f_n)$$
let
$$
\|{\vec{f}}\|_{L^p}=\sum_{j=1}^n \|f_j\|_{L^p}.
$$
Note that
$H^p$ coincides with the classical Lebesgue space $L^p$ whenever $p\in(1,\infty)$ and the $(0,1)\ni s$-th order Riesz singular integral operator $I_s$ acting on a function
$$u\in \bigcup_{{p\in[1,\frac ns)}}L^p$$ is defined by
$$
I_su(x)=\lf(\frac{\Gamma(\frac{n-s}{2})}{\pi^\frac{n}{2}2^s\Gamma(\frac{s}{2})}\r)\int_{\mathbb R^n}|x-y|^{s-n} u(y)\,dy
\qquad\textup{a.\,e.}\ x\in\rn.
$$
We refer the reader to Stein's seminal texts \cite{St, St2} for more about these basic notions.
The well-known Stein-Weiss $H^p$-inequality (cf. \cite{SW}) states that under
$$
0<s<1\ \ \&\ \
1\le p< \frac ns,
$$
the Riesz-Hardy potential space $I_s(H^p)$ can be continuously embedded into
$L^\frac{pn}{n-sp}$, that is,
\begin{equation}
\label{e1}
\|I_s u\|_{L^\frac{pn}{n-sp}}\lesssim \|u\|_{L^p}+\|\vec{R}u\|_{L^p}\approx\|u\|_{H^p} \qquad \ \forall\ \ u\in  H^{p}.
\end{equation}

Let $C_c^\infty$ be the collection of all infinitely differentiable functions compactly supported in $\rn$.
Note that $C_c^\infty\cap H^p$ is dense in $H^p$ for any $p\in[1,\infty)$.
For any $u\in C_c^\infty$ let
$$
(-\Delta)^\frac{s}{2}u(x)=\begin{cases}I_{-s}u(x)=c_{n,s}\int_{\mathbb R^n}\frac{u(x+y)}{|y|^{n+s}}\,dy\ \ &\text{as}\ \ s\in (-1,0)\\
u(x)  \ \ &\text{as}\ \ s=0\\
c_{n,s,+}\int_{\mathbb R^n}\frac{u(x+y)-u(x)}{|y|^{n+s}}\,dy\ \ &\text{as}\ \ s\in (0,1)
\end{cases}
$$
and
$$
\nabla^su(x)=\Bigg(\frac{\partial^s u}{\partial x_j^s}\Bigg)_{j=1}^n=\vec{R}(-\Delta)^\frac{s}{2}u(x)=c_{n,s,-}\int_{\mathbb R^n}\frac{y\big(u(x)-u(x-y)\big)}{|y|^{n+1+s}}\,dy,
$$
where (cf. \cite[Definition~1.1, Lemma~1.4]{Bucur} for $c_{n,s,+}$ and \S \ref{s2} below for $c_{n,s,-}$)
$$
\begin{cases}
c_{n,s}=\frac{\Gamma(\frac{n-s}{2})}{\pi^\frac{n}{2}2^s\Gamma(\frac{s}{2})}\\
c_{n,s,+}=\frac{s2^{s-1}\Gamma\big(\frac{n+s}{2}\big)}{\pi^{\frac{n}{2}}\Gamma\big(1-\frac{s}{2}\big)}\\
c_{n,s,-}=\frac{2^{s}\Gamma\big(\frac{n+s+1}{2}\big)}{\pi^{\frac n2}\Gamma\big(\frac{1-s}{2}\big)}.
\end{cases}
$$
In particular, if $0<s<n=1$ then there are two $s$-dependent constants $c_{\pm}$ to make the following Liouville fractional derivative formulas (cf. \cite{SS2}):
$$
\begin{cases}
(-\Delta)^\frac{s}{2}u(x)=c_+\bigg(\frac{d^s}{dx^s_+}+\frac{d^s}{dx^s_-}\bigg)u(x)\\
\nabla^su(x)=c_-\bigg(\frac{d^s}{dx^s_+}-\frac{d^s}{dx^s_-}\bigg)u(x)\\
\frac{d^s}{dx^s_\pm}u(x)=\frac{s}{\Gamma(1-s)}\int_{\pm\infty}^0\frac{t(u(x+t)-u(x))}{|t|^{2+s}}\,dt.
\end{cases}
$$
Hence it is natural and reasonable to adopt the notations
$$
\nabla^s_+u=(-\Delta)^\frac{s}{2}u\qquad \&\qquad \nabla^s_-u=\nabla^su=\vec{R}(-\Delta)^\frac{s}{2}u.
$$
The operators $\nabla^s_+$ and  $\nabla^s_-$ can be viewed as the fractional extensions of the gradient operator
$$\nabla=(\partial_{x_1},\dots,\partial_{x_n}).$$
Accordingly, for any $s\in(0,1)$,
the  Stein-Weiss inequality \eqref{e1} (cf. \cite{SSS17}) amounts to
\begin{equation}
\label{e3}
\|u\|_{L^\frac{pn}{n-sp}}\lesssim \|\nabla^s_+u\|_{L^p}+\|\nabla^s_{-}u\|_{L^p} \qquad\forall\ \ u\in I_s(C_c^\infty\cap H^p).
\end{equation}
Of course, it is appropriate to mention the following basic facts:
\begin{itemize}

\item[$\rhd$] If $0<s<1<p<n/s$, then the right-hand-side  of \eqref{e3} can be replaced by $\|\nabla^s_\pm u\|_{L^p}$.
More precisely, on the one hand, the boundedness of $\vec R$ on $L^{p>1}$ and \eqref{e3} give (cf. \cite[Lemma 2.4]{SS1})
\begin{equation*}
\|u\|_{L^\frac{pn}{n-sp}}\lesssim \|\nabla^s_{+}u\|_{L^{p}}\qquad\forall\ \  u\in I_s(C_c^\infty\cap H^p).
\end{equation*}
One the other hand, \cite[Theorem 1.8]{SS1} derives
\begin{equation*}
\|u\|_{L^\frac{pn}{n-sp}}\lesssim \|\nabla^s_{-}u\|_{L^{p}}\qquad\forall\ \  u\in I_s(C_c^\infty\cap H^p).
\end{equation*}

\item[$\rhd$] If $0<s<p=1<n$, then the right-hand-side of \eqref{e3} can be replaced by $\|\nabla^s_{-}u\|_{L^{1}}$ {(cf. \cite[Theorem $A'$]{SSS17})}  but cannot be replaced by $\|\nabla^s_+u\|_{L^1}$ (cf. \cite[p.119]{St}).

\item[$\rhd$] If $0<s<p=1=n$, then the right-hand-side of \eqref{e3} cannot be replaced by either $\|\nabla^s_+ u\|_{L^{1}}$ or
$\|\nabla^s_- u\|_{L^{1}}$. A counterexample
is given in \cite[Section~3.3]{SSS17}.

\item[$\rhd$] If  $0<s<p=1\le n$, then instead of the strong-type estimates, one has the following weak-type inequality:
\begin{equation*}
\|u\|_{L^{\frac{n}{n-s},\infty}}=\sup_{t>0}t\big|\big\{x\in\rn:\ |u(x)|>t\big\}\big|^\frac{n-s}{n}\lesssim \|\nabla^s_{\pm}u\|_{L^{1}}
\qquad\forall\ \  u\in I_s(C_c^\infty\cap H^1),
\end{equation*}
while the case for $\|\nabla^s_{+}u\|_{L^{1}}$ is due to the boundedness of $I_s$ from $L^1$ to $L^{\frac{n}{n-s},\infty}$ (cf. \cite{Aduke} or \cite[p.\,119]{St}) and for $\|\nabla^s_{-}u\|_{L^{1}}$ follows further from  \cite[(1.5)]{O} showing
\begin{equation*}
\text{id}=-\sum_{j=1}^n R_j^2\ \ \&\ \
\|R_ju\|_{L^{\frac{n}{n-s},\infty}}\lesssim \|u\|_{L^{\frac{n}{n-s},\infty}}\qquad \forall\ \ (j,u)\in\{1,2,\dots,n\}\times L^{\frac{n}{n-s},\infty}.
\end{equation*}
\end{itemize}

\subsection{Overview of the principal results}\label{s12}
The above analysis has driven us to take a fractional-geometrical-functional look at the most important case $p=1$ of the Stein-Weiss inequality \eqref{e1}.

\subsubsection*{Dense subspaces of $H^{s,1}\ \&\ H^{s,1}_\pm$}\label{s1.2}

Denote by $\cs$ the Schwartz class on $\rn$ consisting of functions $f\in C^\infty$ such that
$$
\rho_{N,\az}(f)=\sup_{x\in\rn}(1+|x|^N)|D^\az f(x)|<\infty\ \ \text{holds for}\ \
\begin{cases}
N\in{\zz_+}=\nn\cup\{0\}\\
\az=(\az_1,\dots,\az_n){\in\zz_+^n}\\
 D^\az=\partial_{x_1}^{\az_1}\cdots\partial_{x_n}^{\az_n}.
 \end{cases}
 $$
Also, write $\cs'$ for the Schwartz {tempered} distribution space - the dual of $\cs$ endowed with the weak-$\ast$ topology.

As detailed in \S \ref{s2}, given $s\in(0,1)$, if
we let
$$\cs_s=\lf\{f\in C^\infty:\  \rho_{n+s,\az}(\phi)=\sup_{x\in\rn}(1+|x|^{n+s})|D^\az f(x)|<\infty\;\ \forall\ \alpha\in\zz_+^n\r\},$$
then for any
$$u\in\cs_s'\subset\cs'$$ we can
define $\nabla^s_\pm u$ as a distribution in $\cs'$.
This definition and the case $p=1$ of \eqref{e3} motivate us to consider the
fractional Hardy-Sobolev space
$$
H^{s,1}=\lf\{u\in\cs_s':\ [u]_{H^{s,1}}=\|(-\Delta)^\frac s2 u\|_{H^1}<\infty\r\}.
$$
Note that
$$u_1-u_2=\text{constant}\Leftrightarrow [u_1]_{H^{s,1}}=[u_2]_{H^{s,1}}.
$$
So, $[\cdot]_{H^{s,1}}$ is properly a norm on quotient space of $H^{s,1}$ modulo the space of all real constants,
and consequently this quotient space is {a Banach space.} 

Upon introducing
$$
H^{s,1}_\pm=\lf\{u\in\cs_s':\ [u]_{H^{s,1}_\pm}=\|\nabla^s_\pm u\|_{L^1}<\infty\r\},
$$
we immediately find
$$
H^{s,1}=H^{s,1}_+\cap H^{s,1}_{-}.
$$
Also, since $\cs$ is dense in $H^{s,1}$ {but} it is hard to see the density of $\cs$ in $H^{s,1}_\pm$, we are induced to introduce
\begin{align*}
\mathring{H}^{s,1}_\pm
&=\text{closure of}\ \cs\ \text{in}\ H^{s,1}_\pm\ \text{under}\  [\cdot]_{H^{s,1}_\pm},
\end{align*}
and yet still have
$$
H^{s,1}=\mathring H^{s,1}_+\cap \mathring H^{s,1}_-
$$
whose $\mathring{H}^{s,1}_\pm$ is a Banach space modulo the space of all real constants.

Correspondingly, for $s\in(0,1)$ let
$W^{s,1}$ be the
collection of all locally integrable functions $u$ on $\rn$ obeying
$$
[u]_{W^{s,1}}= \int_{\mathbb R^n}\int_{\mathbb R^n}\frac{|u(x)-u(y)|}{|x-y|^{n+s}}\,dy\,dx<\infty.
$$
Then the quotient space of $W^{s,1}$ modulo the space of all real constants is equal to the homogeneous Besov space $\dot\Lambda^s_{1,1}$ (cf. \cite{Xadv}) and is also called Sobolev-Slobodeckij space (cf. \cite[p.\,36]{Tr83}) or fractional Sobolev space (cf. \cite{PS}), and hence
$$\cs_\infty=\lf\{f\in\cs:\ D^\alpha\hat f(0)=0\ \forall\ {\az\in\zz_+^n}\r\}$$
is dense in $W^{s,1}$.
In accordance with \cite[Appendix]{CS}, any function
$$f\in L^1\cap W^{s,1}$$ can also be approximated by
functions in $C_c^\infty$. Since (cf. \cite{SS1, SS2})
\begin{equation}
\label{e2}
|\nabla^s_\pm u(x)|\lesssim\int_{\mathbb R^n}\frac{|u(x)-u(y)|}{|x-y|^{n+s}}\,dy\qquad\forall\ \ (u,x)\in {\cs}\times\rn,
\end{equation}
it follows that
\begin{align}\label{e2a}
[u]_{H^{s,1}}=\|\nabla^s_+u\|_{L^1}+\|\nabla^s_{-}u\|_{L^1}\ls [u]_{{W}^{s,1}}\ \ \forall\ \ u\in{\cs}.
\end{align}
Thus,  both $H^{s,1}$ and $\mathring{H}^{s,1}_\pm$ contain  $W^{s,1}$. More information on $\{\nabla^s_\pm, H^{s,1}, H^{s,1}_\pm\}$ is demonstrated in Propositions \ref{prop1}-\ref{prop-dense1}-\ref{prop-dense3}-\ref{prop-dense2}.

\subsubsection*{Tracing laws for $H^{s,1}\ \&\ H^{s,1}_\pm$}\label{s1.3}
The previous discussions derive that
\begin{align}\label{e4x}
\|u\|_{L^\frac{n}{n-s}}\lesssim
\begin{cases}
[u]_{H^{s,1}}\qquad \textup{under}\ \ 0<s<1\le n\\
 [u]_{H^{s,1}_-}\qquad \textup{under}\ \ 0<s<1< n
\end{cases}
\qquad \ \forall\ \  u\in\cs
\end{align}
and
\begin{equation}\label{e4xx}
\|u\|_{L^{\frac{n}{n-s},\infty}}\lesssim
\begin{cases}
  [u]_{H^{s,1}_-}\qquad \textup{under}\ \ 0<s<1=n\\
 [u]_{H^{s,1}_+}\qquad \textup{under}\ \ 0<s<1\le n
\end{cases}
\qquad \ \forall\ \  u\in\cs
\end{equation}
are valid, but
\begin{equation}\label{e4}
\|u\|_{L^\frac{n}{n-s}}\lesssim
\begin{cases}
[u]_{H^{s,1}_-}\qquad \textup{under}\ \ 0<s<1=n\\
[u]_{H^{s,1}_+}\qquad \textup{under}\ \ 0<s<1\le n
\end{cases}
\qquad \ \forall\ \  u\in\cs
\end{equation}
is not true.
In order to understand an essential reason for the truth of \eqref{e4x} or \eqref{e4xx} and the fault of \eqref{e4},
we investigate under what condition of a given nonnegative Radon measure $\mu$ (restricting/tracing a function to a lower dimensional manifold) in $\mathbb R^n$ one has
\begin{equation}
\label{e5}
[u]_X\gtrsim\begin{cases}\|u\|_{L^\frac{n}{n-s}(\mu)}\ \ &\text{as}\ \ X=H^{s,1}\\
\|u\|_{L^{\frac{n}{n-s},\infty}(\mu)}\ \ &\text{as}\ \ X=H^{s,1}_\pm
\end{cases}
\qquad \ \forall\ \ u\in \mathcal{S}\ ?
\end{equation}
Accordingly, we discover such a tracing law that \eqref{e5} is valid if and only if the isocapacitary inequality
\begin{equation}
\label{e6}
\big(\mu(K)\big)^\frac{n-s}{n}\lesssim \text{Cap}_{X}(K)\qquad\forall\ \text{compact}\ \ K\subset\mathbb R^n
\end{equation}
holds, where the right quantity of \eqref{e6} is called $\big\{H^{s,1},H^{s,1}_\pm\big\}\ni X$-capacity of $K$ and defined by
\begin{align*}
\inf\Big\{[f]_{X}:\ 1\le f\ \text{on}\ K\ \&\ \ f\in \mathcal{S}\Big\}.
\end{align*}
In \S \ref{s3}, we utilize the fractional Sobolev capacity $\text{Cap}_{W^{s,1}}$ and the Hausdorff capacity $\Lambda^{n-s}_{(\infty)}$ to handle $\text{Cap}_{X\in\{H^{s,1}, H^{s,1}_\pm\}}$ and its strong or weak capacitary inequality through Theorems \ref{thm3.1} \& \ref{thm3.2}-\ref{t31}. Then, we verify \eqref{e5}$\Leftrightarrow$\eqref{e6} in Theorem \ref{thm1}.

\subsubsection*{Duality laws for $H^{s,1}\ \&\ \mathring H^{s,1}_\pm$}\label{s1.4}
As a by-product of \eqref{e2}-\eqref{e2a} and the capacity analysis developed within \S \ref{s3}, Theorem \ref{thm2} shows that
the dual space $[H^{s,1}]^\ast$ can be characterized by the bounded solutions
$$(U_0,U_1,\dots,U_n)\in\big( L^\infty\big)^{1+n}$$ of the fractional differential equation
$$[\nabla^s_+]^\ast U_0+[\nabla^s_-]^\ast(U_1,\dots,U_n)=T,$$
where
$$[\nabla^s_+]^\ast=(-\Delta)^\frac s2\ \ \ \&\ \ \ [\nabla^s_-]^\ast=-(-\Delta)^\frac s2 \vec{R}.
$$
Also, a similar characterization for
$$
[\mathring H^{s,1}_+]^\ast\ \ \text{or}\ \ [\mathring H^{s,1}_-]^\ast
$$
is presented in Theorem \ref{thm2} in terms of the bounded solutions to the fractional differential equation
$$[\nabla^s_+]^\ast U_0=T\ \ \ \text{or} \ \ \  [\nabla^s_-]^\ast(U_1,\dots,U_n)=T.
$$

Furthermore, suppose that $\BMO$ is the well-known John-Nirenberg class of all locally integrable functions $f$ on $\rn$ with bounded mean oscillation (cf. \cite{JN})
$$
\|f\|_\BMO=\sup_{B\subset\rn} \frac1{|B|}\int_B |f(x)-f_B|\, dx<\infty
$$
where
$$
f_B=\frac1{|B|}\int_B f(x)\,dx
$$
and the supremum is taken over all Euclidean balls $B\subset\rn$ with volume $|B|$. Surprisingly and yet naturally, the argument for Theorem \ref{thm2}, plus the intrinsic structure of
$$
[H^{s,1}]^\ast\ \ \&\ \
[\mathring H^{s,1}_-]^\ast,
$$
reveals (cf. Theorem \ref{thm3}) the inclusions
\begin{equation}
\label{e110}
\BMO\supset I_s\big([\mathring H^{s,1}_-]^\ast\big)=\vec{R}\cdot\big(L^\infty\big)^{n}\supset I_s(L^{\frac ns,\infty})\supset I_s(L^{\frac ns})\ \supset W^{1,n}\ \text{under}\ n\ge 2
\end{equation}
in the sense of $\textup{in}\ \cs'/\mathcal P$ or $\BMO$,
where $L^{\frac ns,\infty}$ denotes the weak Lebesgue $\frac{n}{s}$-space consisting of all Lebesgue measurable functions 
$f$ on $\rn$ such that 
$$\|f\|_{L^{\frac ns,\infty}}=\sup_{t\in(0,\fz)}t|\{x\in\rn: \ |f(x)|>t\}|^{\frac sn}<\infty$$
and ${W}^{1,n}$ is 
the 
space of all locally integrable functions $f$ with respect to  $$
\|f\|_{W^{1,n}}=\left(\int_{\rn}|\nabla f(x)|^n\,dx\right)^\frac1n<\infty,
$$
and one has the following  decomposition of the canonical $\BMO$-function (cf. \cite{Sp, JN})
$$
\begin{cases}
\ln|x|=\sum_{j=1}^n R_j\left(\Bigg(\frac{\Gamma(\frac{1}{2})\pi^{\frac{1-n}{2}}2^{1-n}}{(n-1)\Gamma(\frac{n-1}{2})}\Bigg)\bigg(\frac{x_j}{|x|}\bigg)\right)\ &\text{under}\ n\ge 2;\\
\ln\big|\frac{x+1}{x-1}\big|=\pi H(1_{[-1,1]})(x)=\hbox{p.v.}\int_{\mathbb R}\frac{1_{[-1,1]}(y)}{x-y}\,dy &\text{under}\ n=1,
\end{cases}
$$
with $1_{[-1,1]}$ being the characteristic function of the interval $[-1,1]$. Let us  take the space $I_s(L^\frac ns)$ for example to explain why we require the validity of  \eqref{e110} in the sense of $\cs'/\mathcal P$. Indeed, if $f\in L^\frac ns$, then $I_sf$ may not be pointwisely well defined, but it is a well defined distribution in $\cs'/\mathcal P$.

Nevertheless, the importance of \eqref{e110} can be also seen below.

\begin{itemize}

\item[$\rhd$] Via a different approach, \eqref{e110} reveals the essential structure of the Riesz transform part $\vec{R}\cdot\big(L^\infty\big)^{n}$ of the Fefferman-Stein decomposition (cf. \cite[Theorems 2\&3]{FS}, \cite{U} for a constructive proof, and \cite{Janson, Frazier} for some related discussions):
$$
\BMO=L^\infty+\vec{R}\cdot\big(L^\infty\big)^{n}=L^\infty+I_s\big([\mathring H^{s,1}_-]^\ast\big).
$$
And yet, it is uncertain that $I_s\big([\mathring H^{s,1}_-]^\ast\big)$ is strictly contained in $\BMO$ under $n\ge 2$.

\item[$\rhd$] \eqref{e110} may be treated as a solution to the Bourgain-Brezis question (cf. \cite[p.396]{BB}) - \emph{What are the function spaces $X, W^{1,n}\subset X\subset \BMO$, such that every $F\in X$ has a decomposition $F=\sum_{j=1}^n R_j Y_j$ where $Y_j\in L^\infty$?}. This question is motivated by the fact that ${W}^{1,n}$ obeys the following decomposition (\cite[p.305]{BB})
$$
{W}^{1,n}=\vec{R}\cdot\big(L^\infty\cap{W}^{1,n}\big)^n\ \ \text{under}\ \ n\ge 2.
$$

\item[$\rhd$] As proved in \cite[Theorem 3.5]{PT} (solving the open problem in \cite[Remark 3.12]{AFP}), if $\mathrm{BV}$ is the space of all $L^1$-functions with bounded variation on $\mathbb R^n$, then its dual space $[\mathrm BV]^\ast$ comprises all tempered distributions
$$
f=\nabla\cdot(U_1,\dots,U_n)=\sum_{j=1}^n\partial_{x_j}U_j
\ \ \textup{for some}\ \ (U_1,\dots, U_n)\in \big(L^\infty\big)^n,
$$
and hence
$$
\begin{cases}
[\mathrm BV]^\ast=\nabla\cdot(L^\infty)^n;\\
I_1[\mathrm BV]^\ast=I_1\nabla \cdot (L^\infty)^n=\vec R\cdot (L^\infty)^n\ \text{in}\ \cs'/\mathcal P.
\end{cases}
$$
This indicates that \eqref{e110} has a limiting case $s\uparrow 1$:
$$
W^{1,n}\subset I_1[\mathrm BV]^\ast\subset \BMO\ \ \text{under}\ \ n\ge 2.
$$

\item[$\rhd$] \eqref{e110} improves \cite[Corollary~1.5]{Sp18} which proves that $$I_s(L^{\frac ns,\infty})\subset \vec{R}\cdot\big(L^\infty\big)^{n}.$$

\item[$\rhd$] \eqref{e110} derives that (cf. Theorem \ref{thm3}) for $$(Y_0,n-1)\in I_s\big([\mathring H^{s,1}_-]^\ast\big)\times\mathbb N$$
one can get a vector-valued function
$$
(Y_1,\dots,Y_n)\in \big(L^\infty\big)^n\ \ \text{solving}\ \
\text{div}\big((-\Delta)^{-\frac12}Y_1,\dots, (-\Delta)^{-\frac12}Y_n\big)=Y_0\ \ \text{in}\ \ \cs'/\mathcal P.
$$
Consequently, this divergence-equation-result is valid for
$$
(Y_0,n-1)\in W^{1,n}\times\mathbb N.
$$
Although $W^{1,n=1}$ is a proper subspace of $L^\infty$ on $\mathbb R$, the foregoing consequence cannot be extended to
$$W^{1,\infty}=\Big\{f:\ f\in L^\infty\ \ \&\ \ \nabla f\in (L^\infty)^n\Big\}$$
in the following sense that (cf. \cite[p.394]{BB} or \cite{Mc})
$$
\exists\ \ F_0\in L^\infty\ \ \text{such that}\ \
\text{div}\vec F=F_0\ \ \text{has no solution}\ \  \vec F=(F_1,\dots,F_n)\in \big(W^{1,\infty}\big)^n\ \ \text{under}\ \ n\ge 2.
$$
\end{itemize}

\subsubsection*{Notation}\label{s16} In the foregoing and forthcoming discussions,
$U\lesssim V$ (resp.\, $U\gtrsim V$) means $U\le cV$ (resp. $U\ge cV$) for a positive constant $c$ and $U\approx V$ amounts to $U\gtrsim V\gtrsim U$. Moreover, $1_E$ stands for the characteristic function of a set $E\subset\rn$, and
$$\begin{cases}\nn=\{1,2,\dots\}\\
\zz_+=\{0,1,2,\dots\}\\
\zz=\{0,\pm1,\pm2,\dots\}.
\end{cases}
$$


\section{Dense subspaces of $H^{s,1}\, \&\, H^{s,1}_\pm$}\label{s2}
\setcounter{equation}{0}

\subsection{Initial definitions of $\nabla^s_\pm$}\label{s21}
Note that any $f\in\cs$ has its Fourier transform
$$
\hat f(\xi)=\int_\rn f(x)e^{-2\pi i x\cdot\xi}\,dx\qquad\forall\ \ \xi\in\rn.
$$
So the Fourier transform can be naturally extended to $\cs'$ by the dual paring
$$\laz \hat f,\varphi\raz=\laz f,\hat\varphi\raz \qquad \ \forall\ \ f\in\cs'\;\&\; \varphi\in\cs.$$

\begin{definition}\label{defn0}
For $(s,\phi)\in(-n,1]\times\cs$ let $(-\Delta)^{\frac s2} \phi$ be determined by the Fourier transform
$$\lf((-\Delta)^{\frac s2}\phi\r)^\wedge(\xi)= (2\pi|\xi|)^s\,\hat \phi(\xi)\qquad \forall\ \ \xi\in\rn.
$$
Then we have the following comments.
\begin{enumerate}
\item[\rm (i)] Since $|\xi|^s$ has singularity at the origin, it is not true that
$(-\Delta)^{\frac \beta2} \phi\in\cs$ for general $\phi\in\cs$. However,
if
$$\cs_s=\lf\{f\in C^\infty:\  \rho_{n+s,\az}(\phi)=\sup_{x\in\rn}(1+|x|^{n+s})|D^\az f(x)|<\infty\;\forall\; \alpha\in\zz_+^n\r\},$$
then (cf. \cite[Section~2]{Sil} or \cite{Bucur})
$$(-\Delta)^{\frac s2} \phi\in\cs_s\qquad \forall\ \ \phi\in\cs.$$

\item[\rm (ii)]  Recall that
$$
\cs_\infty=\lf\{f\in\cs:\, D^\alpha\hat f(0)=0\ \ \forall\ \ \az\in\mathbb Z^n_+\r\}.
$$
Then
$$(-\Delta)^{\frac s2} \phi\in\cs_\infty\qquad \forall\ \ \phi\in\cs_\infty.$$
\item[\rm (iii)] As the dual  space of $\cs_\infty$ let $\cs'/\mathcal P$ be  the space $\mathcal S'$ modulo
the space $\mathcal P$ of all real-valued polynomials.
Then, for any $f\in\cs'/\mathcal P$ we can define $(-\Delta)^{\frac s2} f$ as a distribution in $\cs'/\mathcal P$:
$$\laz (-\Delta)^{\frac s2} f,\phi\raz=\laz f, (-\Delta)^{\frac s2} \phi\raz\quad\forall\ \ \phi\in\cs_\infty.$$
Evidently, $(-\Delta)^{\frac s2}$ maps $\cs'/\mathcal P$ onto $\cs'/\mathcal P$ (cf. \cite[pp. 241-242]{Tr83}).
\end{enumerate}
\end{definition}

The $(0,n)\ni \az$-th order Riesz potential $I_\az$ is defined by
$$
I_\az=(-\Delta)^{-\frac \az 2}.
$$
If $f\in\cs$, then $I_\az f$ has the integral expression (cf. \cite[p.\,117]{St})
$$
I_\az f(x)=c_{n,\az}\int_\rn |x-y|^{\az-n}f(y)\, dy\ \ \text{with}\ \
c_{n,\az}=\frac{\Gamma(\frac{n-\az}{2})}{\pi^{\frac n2}2^\az\Gamma(\frac \az2)}.
$$
Based on Definition \ref{defn0}(iii), the definition of $I_\alpha f$ is extendable to $f\in\cs'/\mathcal P$ and so
$I_\az$ maps $\cs'/\mathcal P$ onto $\cs'/\mathcal P$.

\subsubsection*{About $\nabla^s_+$}\label{s211} Upon following \cite[Section~2.1]{Sil}, we can extend the definition of $\nabla^s_+$ to more general distributions.

\begin{definition}\label{defn1}
For $(s,f)\in(0,1)\times\cs$ set
$$\nabla^s_+ \phi=(-\Delta)^{\frac s2}\phi.$$
Then we have the following comments.
\begin{itemize}
\item[\rm (i)]
If $f\in\cs_s'$, then $\nabla^s_+ f$ is defined as a distribution in $\cs'$:
 $$\laz \nabla^s_+f,\,\phi\raz=\laz f,\,\nabla^s_+\phi \raz \qquad \forall\ \ \phi\in\cs.
 $$

\item[\rm (ii)] According to \cite[Proposition~2.4]{Sil}, if $f$ belongs to the weighted-$L^1$ space
$$\mathbb L_s=L_\loc^1\cap\cs_s'=\lf\{f:\, \rn\to  \rr\;\textup{obeys}\; \|f\|_{\mathbb L_s}=\int_\rn \frac{|f(x)|}{1+|x|^{n+s}}\,dx<\infty\r\}$$
and the H\"older space $\mathcal C^{s+\ez_x}$ in a neighborhood of $x\in\rn$ for some $\ez_x\in(0, 1-s]$, then
$\nabla^s_+ f$ is continuous at $x$ and it has the integral expression (cf. \cite[Definition~1.1 \& Lemma~1.4]{Bucur})
\begin{align*}
\nabla^s_+ f(x)= c_{n,s,+} \int_{\mathbb R^n}\frac{f(x)-f(y)}{|x-y|^{n+s}}\,dy\ \ \text{with}\ \
 c_{n,s,+}=\frac{s2^{s-1}\Gamma\big(\frac{n+s}{2}\big)}{\pi^{\frac{n}{2}}\Gamma\big(1-\frac{s}{2}\big)}.
 \end{align*}
 Evidently, {this integral expression}  holds for any $f\in{\cs}$.
 \end{itemize}
\end{definition}

The next lemma shows that $I_s$ is the inverse of $\nabla^s_+$ on $\cs$, and vice versa.

\begin{lemma}\label{lem-x1}
If $(s,\phi,x)\in(0,1)\times\cs\times\rn$, then $$
I_s(-\Delta)^\frac s2 \phi(x)=\phi(x)=(-\Delta)^\frac s2 I_s\phi(x).
$$
\end{lemma}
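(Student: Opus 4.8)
\emph{Strategy.} Since $I_s=(-\Delta)^{-\frac s2}$ and $(-\Delta)^{\frac s2}$ are, by Definition \ref{defn0}, Fourier multipliers with reciprocal symbols $(2\pi|\xi|)^{-s}$ and $(2\pi|\xi|)^{s}$, the natural plan is to prove both identities first as equalities in the quotient space $\cs'/\mathcal P$ — where they are essentially automatic — and then to upgrade them to genuine pointwise identities for every $x\in\rn$ by showing that the a priori polynomial discrepancy in fact vanishes. The point to keep in mind is that $(-\Delta)^{\frac s2}$ does \emph{not} preserve $\cs$: by Definition \ref{defn0}(i) one only has $(-\Delta)^{\frac s2}\phi\in\cs_s$ for $\phi\in\cs$, while $I_s\phi$ lies in $\mathbb L_s\cap C^\infty$ (its derivatives satisfy $\partial^\az I_s\phi=I_s\partial^\az\phi$ and decay like $(1+|x|)^{s-n}$), so one must track in which space each intermediate object lives.

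\emph{Execution.} First, by Definition \ref{defn0}(iii) the operators $(-\Delta)^{\frac s2}$ and $I_s$ are mutually inverse bijections of $\cs'/\mathcal P$ (on $\cs_\infty$–test functions the product of symbols $(2\pi|\xi|)^{-s}(2\pi|\xi|)^{s}$ acts as the identity, there being no difficulty at the origin), so
$$
I_s(-\Delta)^{\frac s2}\phi-\phi\in\mathcal P\qquad\text{and}\qquad (-\Delta)^{\frac s2}I_s\phi-\phi\in\mathcal P .
$$
Next I would verify that the two left–hand objects are genuine continuous functions that are bounded and tend to $0$ at infinity. For the first, $I_s\big[(-\Delta)^{\frac s2}\phi\big](x)=c_{n,s}\int_{\rn}|x-y|^{s-n}(-\Delta)^{\frac s2}\phi(y)\,dy$ converges absolutely, since $|x-y|^{s-n}\in L^1_{\loc}$ (because $s-n>-n$) while $(-\Delta)^{\frac s2}\phi\in\cs_s\subset L^1$; splitting $\rn$ into the three regions $|x-y|<|x|/2$, $|x|/2\le|x-y|<2|x|$, and $|x-y|\ge2|x|$ and using the decay $|(-\Delta)^{\frac s2}\phi(y)|\lesssim(1+|y|)^{-n-s}$ yields a bound $O(|x|^{-n})+O(|x|^{s-n})+O(|x|^{s-n})\to0$ as $|x|\to\infty$, and continuity follows by dominated convergence. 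For the second, $I_s\phi\in\mathbb L_s\cap C^\infty$, so Definition \ref{defn1}(ii) applies and gives $(-\Delta)^{\frac s2}I_s\phi(x)=c_{n,s,+}\int_{\rn}\frac{I_s\phi(x)-I_s\phi(y)}{|x-y|^{n+s}}\,dy$ as a continuous function; splitting this integral into $|x-y|<1$ (where $|I_s\phi(x)-I_s\phi(y)|\le\big(\sup_{|z-x|<1}|\nabla I_s\phi(z)|\big)|x-y|\lesssim(1+|x|)^{s-n}|x-y|$, and $\int_{|w|<1}|w|^{1-n-s}\,dw<\infty$ since $s<1$) and $|x-y|\ge1$ (where the kernel is integrable and $|I_s\phi(x)|+|I_s\phi(y)|\lesssim(1+|x|)^{s-n}+(1+|y|)^{s-n}$, the latter giving a uniformly small tail), shows this is bounded and tends to $0$ at infinity as well. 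Finally, a polynomial that differs from the rapidly decaying function $\phi$ by a bounded quantity must be constant, and since both sides tend to $0$ at infinity this constant is $0$; hence $I_s(-\Delta)^{\frac s2}\phi=\phi=(-\Delta)^{\frac s2}I_s\phi$ at every point $x\in\rn$.

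\emph{Main obstacle.} The genuinely delicate step is the passage from ``equal in $\cs'/\mathcal P$'' to ``equal pointwise everywhere'': one must produce the uniform size and decay estimates above so as to exclude a nonzero additive polynomial — in the end, a nonzero constant. Everything else (the reciprocal–multiplier bookkeeping on $\cs'/\mathcal P$, the absolute convergence of the defining integrals for $\phi\in\cs$, and the fact that the integral formula for $I_s$ represents the same element of $\cs'/\mathcal P$ as the Fourier definition) is routine within the framework of \S\ref{s2}.
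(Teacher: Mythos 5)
Your argument is correct, but it follows a genuinely different route from the paper's. The paper never passes through $\cs'/\mathcal P$: for the first identity it writes $I_s(-\Delta)^{\frac s2}\phi(x)$ as an absolutely convergent integral, applies the multiplication formula of \cite[p.\,117, Lemma~1(a)]{St} to turn it into $\int_\rn(2\pi|y|)^{-s}e^{2\pi i x\cdot y}\big((-\Delta)^{\frac s2}\phi\big)^\wedge(y)\,dy$, and concludes by Fourier inversion -- a pointwise computation with no polynomial ambiguity ever arising. For the second identity the paper shows, as you do, that $D^\az I_s\phi\in L^\infty$, so $I_s\phi\in\mathbb L_s$ is locally Lipschitz and Definition \ref{defn1}(ii) makes $(-\Delta)^{\frac s2}I_s\phi$ continuous; but instead of arguing modulo polynomials it uses $I_s\phi\in\cs_s'$ to test against \emph{every} $\psi\in\cs$ (not merely $\cs_\infty$), namely $\laz(-\Delta)^{\frac s2}I_s\phi,\psi\raz=\laz I_s\phi,(-\Delta)^{\frac s2}\psi\raz=\laz\phi,I_s(-\Delta)^{\frac s2}\psi\raz=\laz\phi,\psi\raz$ via Fubini and the already-proved first identity, so equality holds in $\cs'$ outright and then pointwise by continuity -- the ``main obstacle'' you identify simply does not appear. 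Your route instead proves both identities in $\cs'/\mathcal P$ by the reciprocal-multiplier calculus and then excludes the residual polynomial through decay-at-infinity estimates; these estimates (the three-region splitting for $I_s\big[(-\Delta)^{\frac s2}\phi\big]$, and the $|x-y|<1$ versus $|x-y|\ge 1$ splitting using $|\nabla I_s\phi(z)|\ls(1+|z|)^{s-n}$) are correct, as is the compatibility of the integral formulas with the multiplier definitions that you flag as routine. What the paper's approach buys is economy: no quantitative decay analysis is needed, and the second identity inherits everything from the first by duality over all of $\cs$. What your approach buys is a self-contained use of the inverse-operator calculus on $\cs'/\mathcal P$ that the paper invokes anyway later (e.g.\ in Proposition \ref{prop1}(iii)), at the cost of having to verify boundedness and vanishing at infinity of both composite functions.
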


\begin{proof}
On the one hand, \cite[p.\,117,\, Lemma~1(a)]{St} and Definition \ref{defn1} derive
\begin{align*}
I_s(-\Delta)^\frac s2 \phi(x)
&=c_{n,s}\int_\rn |y|^{s-n} (-\Delta)^\frac s2 \phi(x-y)\, dy\\
&=\int_\rn (2\pi|y|)^{-s} \lf((-\Delta)^\frac s2 \phi(x-\cdot)\r)^\wedge(y)\, dy\\
&=\int_\rn (2\pi|y|)^{-s} e^{2\pi i x\cdot y}\lf((-\Delta)^\frac s2 \phi\r)^\wedge(y)\, dy\\
&=\int_\rn  e^{2\pi i x\cdot y}\hat \phi(y)\, dy\\
&=\phi(x).
\end{align*}

On the other hand, for any $\az\in\mathbb Z^n_+$  we use
$$D^\az I_s\phi(x)= I_sD^\az\phi(x)=c_{n,s}\int_\rn |y|^{s-n} D^\az\phi(x-y)\, dy$$
to get
\begin{align*}
|D^\az I_s\phi(x)|
&\ls \int_\rn |y|^{s-n} (1+|x-y|)^{-(n+1)}\, dy\\
&\ls \int_{|y|<1}  |y|^{s-n} \, dy+\int_{|y|\ge1}   (1+|x-y|)^{-(n+1)}\, dy\\
&\ls 1,
\end{align*}
which implies
$$D^\az I_s\phi\in L^\infty.$$
Accordingly, $I_s\phi\in \mathbb L_s$  and $I_s\phi$ locally satisfies the Lipschitz condition. Now, an application of Definition \ref{defn1}(ii) gives that $(-\Delta)^\frac s2 I_s\phi$ is continuous  on $\rn$.
Furthermore, since
$$I_s\phi\in L^\infty\Rightarrow I_s\phi\in\cs_s',
$$
we have
\begin{align*}
\laz (-\Delta)^\frac s2 I_s\phi, \psi\raz
=\laz  I_s\phi, (-\Delta)^\frac s2\psi\raz
=\laz  \phi, I_s(-\Delta)^\frac s2\psi\raz
=\laz \phi,\psi\raz\ \ \forall\ \ \psi\in\cs,
\end{align*}
where the second identity is from the Fubini theorem and the last identity is due to {the already-proved identification}
$$
I_s(-\Delta)^\frac s2 \psi=\psi\qquad \ \forall\ \ \psi\in\cs.
$$
Accordingly,
$$
(-\Delta)^\frac s2 I_s\phi=\phi \qquad \ \text{in}\ \ \cs'.
$$
But nevertheless,
$$(-\Delta)^\frac s2 I_s\phi\ \ \&\ \ \phi$$ are continuous on $\rn$, so we arrive at
$$(-\Delta)^\frac s2 I_s\phi(x)=\phi(x)\qquad \ \forall\ \ x\in\rn.
$$
\end{proof}

\subsubsection*{About $\nabla^s_- u$}\label{s212} We begin with the following

\begin{definition}\label{defn2}
For $(s,j,\phi)\in(0,1)\times\{1,2,\dots,n\}\times\cs$ let
$$\nabla^s_- \phi=\big(\nabla^s_1\phi,\nabla^s_2\phi,\dots,\nabla^s_n\phi\big),$$
where each $\nabla^s_j \phi$ is defined via the Fourier transform:
$$\lf(\nabla^s_j \phi\r)^\wedge(\xi)=(-2\pi i \xi_j)(2\pi|\xi|)^{s-1}\,\hat \phi(\xi)\qquad \forall\ \ \xi\in\rn.$$
\end{definition}

\begin{lemma}\label{lem-integral-1}
If $(s,\phi,x)\in(0,1)\times\cs\times\rn$, then
\begin{align}\label{nablas}
\nabla^s_- \phi(x){=I_{1-s}\nabla\phi(x)}=c_{n,s,-} \int_{\mathbb R^n}\left(\frac{x-y}{|x-y|}\right)\left(\frac{\phi(x)-\phi(y)}{|x-y|^{n+s}}\right)\,dy
\ \ \text{with}\ \
c_{n,s,-}=\frac{2^{s}\Gamma\big(\frac{n+s+1}{2}\big)}{\pi^{\frac n2}\Gamma\big(\frac{1-s}{2}\big)}.
\end{align}
\end{lemma}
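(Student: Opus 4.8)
The plan is to prove the two identities in the statement separately: the first, $\nabla^s_-\phi=I_{1-s}\nabla\phi$, by comparing Fourier multipliers, and the second, the singular‑integral formula, by writing $I_{1-s}$ through its Riesz‑potential integral and integrating by parts. For the first identity, note that if $\phi\in\cs$ then every $\partial_{x_j}\phi$ again lies in $\cs$, so $I_{1-s}\partial_{x_j}\phi$ is the Riesz potential of a Schwartz function and, exactly as in the proof of Lemma~\ref{lem-x1}, is a bounded continuous element of $\mathbb L_s$; likewise $\nabla^s_j\phi$ is continuous and bounded because its Fourier transform $(-2\pi i\xi_j)(2\pi|\xi|)^{s-1}\hat\phi(\xi)$ is in $L^1$. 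Thus both sides are honest functions, and it suffices to observe from Definition~\ref{defn2} that the Fourier multiplier of $\nabla^s_j$ factors as the multiplier of $(-\Delta)^{-(1-s)/2}=I_{1-s}$ times that of $\partial_{x_j}$, giving $\nabla^s_j\phi=I_{1-s}\partial_{x_j}\phi$ pointwise.

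Next I would derive the singular‑integral formula from the integral representation recorded just after Definition~\ref{defn0},
\begin{equation*}
I_{1-s}\partial_{x_j}\phi(x)=c_{n,1-s}\int_{\mathbb R^n}|x-y|^{(1-s)-n}\,\partial_{y_j}\phi(y)\,dy,\qquad c_{n,1-s}=\frac{\Gamma\big(\tfrac{n+s-1}{2}\big)}{\pi^{n/2}2^{1-s}\Gamma\big(\tfrac{1-s}{2}\big)},
\end{equation*}
the integral converging absolutely since $|x-y|^{(1-s)-n}$ is locally integrable and $\partial_{y_j}\phi\in\cs$. I would then integrate by parts in $y$ over $\{|y-x|>\ez\}$ and let $\ez\downarrow0$. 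Using $\partial_{y_j}|x-y|^{(1-s)-n}=(1-s-n)(y_j-x_j)|x-y|^{-n-1-s}$, the interior term converges to $(n+s-1)c_{n,1-s}$ times the principal value of $\int (y_j-x_j)|x-y|^{-n-1-s}\phi(y)\,dy$, while the boundary integral over $\{|y-x|=\ez\}$ equals $\ez^{-n-s}\int_{|y-x|=\ez}(x_j-y_j)\phi(y)\,dS(y)$; subtracting the vanishing quantity $\ez^{-n-s}\phi(x)\int_{|y-x|=\ez}(x_j-y_j)\,dS(y)=0$ and estimating crudely shows this boundary term is $O(\ez^{1-s})$, hence negligible.

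Finally I would symmetrize the surviving principal value. Since $(x_j-y_j)|x-y|^{-n-1-s}$ is odd about $y=x$, its principal value integrates to zero, so subtracting $\phi(x)$ times it turns the expression into the absolutely convergent $\int\big(\tfrac{x_j-y_j}{|x-y|}\big)\big(\tfrac{\phi(x)-\phi(y)}{|x-y|^{n+s}}\big)\,dy$ — absolute convergence following from $|x_j-y_j|\le|x-y|$ together with $|\phi(x)-\phi(y)|\lesssim|x-y|$ near $x$ and the boundedness of $\phi$ away from $x$. The surviving constant is $(n+s-1)c_{n,1-s}$, and the functional equation $(n+s-1)\Gamma\big(\tfrac{n+s-1}{2}\big)=2\Gamma\big(\tfrac{n+s+1}{2}\big)$ converts it into exactly $c_{n,s,-}=\dfrac{2^{s}\Gamma\big(\tfrac{n+s+1}{2}\big)}{\pi^{n/2}\Gamma\big(\tfrac{1-s}{2}\big)}$, finishing the proof.

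The step I expect to be the real obstacle is the integration by parts: the differentiated kernel $\partial_{y_j}|x-y|^{(1-s)-n}$ is of size $|x-y|^{-n-s}$ and so is not locally integrable, so the identity can only be read as a principal value, and the whole limiting procedure hinges on the spherical boundary term being $O(\ez^{1-s})\to0$ — which is precisely where $s<1$ enters. Everything else is routine bookkeeping: the multiplier factorization, the $\phi(x)$‑subtraction that restores absolute convergence, and the Gamma‑function identity. If one prefers to avoid the integration by parts altogether, one may instead compute the Fourier transform of the right‑hand singular integral directly: a one‑dimensional slice reduces the constant to $\int_0^\infty u^{-1-s}\sin u\,du=s^{-1}\Gamma(1-s)\sin\tfrac{\pi s}{2}$, which after Legendre's duplication formula matches the multiplier of Definition~\ref{defn2}; this route is correct but somewhat longer.
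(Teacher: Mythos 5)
Your proof is correct and follows essentially the same route as the paper: one first identifies $\nabla^s_-\phi=I_{1-s}\nabla\phi$ on the Fourier side, then integrates the Riesz-potential representation by parts, showing the spherical boundary term near the singularity is $O(\ez^{1-s})$ (after subtracting $\phi(x)$ against the odd kernel) and converting the surviving constant into $c_{n,s,-}$ via the Gamma functional equation, with the $\phi(x)$-subtraction restoring absolute convergence exactly as in the paper's final dominated-convergence step. The only cosmetic differences are that the paper truncates at both an inner radius $\ez$ and an outer radius $N$ (estimating the outer sphere term by $N^{-s}\|\phi\|_{L^\infty}$) and works in convolution form, while you truncate only at the singularity and use the rapid decay of $\phi$; your constant bookkeeping $(n+s-1)c_{n,1-s}=c_{n,s,-}$ is the precise one.
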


\begin{proof}
With $$c_{n,1-s}=\frac{2^{s-1}\Gamma(\frac{n+s-1}{2})}{\pi^{\frac n2}\Gamma(\frac {1-s}2)},$$
it follows from Definition \ref{defn2} that any $\phi \in\cs$ satisfies
\begin{align*}
\nabla^s_-\phi (x)
&=\lf((2\pi|\xi|)^{s-1}\,(\nabla \phi )^\wedge\r)^\vee(x)\\
&=c_{n,1-s}
\int_\rn \frac{\nabla \phi (x-y)}{|y|^{n-(1-s)}}\, dy\\
&=c_{n,1-s}\lim_{\ez\to 0,\;N\to\infty} \int_{\ez<|y|<N}\frac{\nabla \phi (x-y)}{|y|^{n-(1-s)}}\, dy.
\end{align*}
Note that the second equality in the above formula also implies
$$\nabla^s_- \phi(x)=I_{1-s}\nabla\phi(x)\qquad\forall \ \ x\in\rn.$$
Moreover, the integration by parts formula gives
\begin{align*}
\int_{\ez<|y|<N}\frac{\nabla \phi (x-y)}{|y|^{n-(1-s)}}\, dy&=
\int_{|y|\in\{\ez,N\}}\left(\frac{\phi (x-y)}{|y|^{n-(1-s)}}\right) \vec \nu(y)\, d\ch^{n-1}(y)\\
&\quad
+(s+n)\int_{\ez<|y|<N} \left(\frac {y}{|y|}\right) \left(\frac{\phi (x-y)}{|y|^{n+s}}\right)\, dy,
\end{align*}
where $\vec\nu$ is the outward unit vector on the surface of  the ring $$\{y\in\rn:\, \ez<|y|<N\}$$
and $\ch^{n-1}$ is the $(n-1)$-dimensional Hausdorff measure. An application of
$$
\begin{cases}\vec\nu(y)=-\frac y{|y|}\ \ &\text{when}\ \ |y|=\ez\\
\vec\nu(y)=\frac y{|y|}\ \ &\text{when}\ \  |y|=N
\end{cases}
$$
derives
\begin{align*}
\lf|\int_{|y|=\ez}\left(\frac{\phi (x-y)}{|y|^{n-(1-s)}}\right) \vec \nu(y)\, d\ch^{n-1}(y)\r|
&=\lf|\int_{|y|=\ez}\left(\frac{\phi (x-y)-\phi (x)}{|y|^{n-(1-s)}}\right)\left( \frac y{|y|}\right)\, d\ch^{n-1}(y)\r|
\ls {\ez^{1-s} \big\||\nabla \phi|\big \|_{L^\infty}}
\end{align*}
and
\begin{align*}
\lf|\int_{|y|=N}\left(\frac{\phi (x-y)}{|y|^{n-(1-s)}}\right) \vec \nu(y)\, d\ch^{n-1}(y)\r|{\ls N^{-s} \|\phi \|_{L^\infty}}.
\end{align*}
Consequently, the Lebesgue dominated convergence theorem, along with  $$
\int_{\mathbb R^n}\frac{|\phi (x)-\phi (y)|}{|x-y|^{n+s}}\,dy<\infty,
$$
yields the desired {integral expression in} \eqref{nablas}:
\begin{align*}
\nabla^s_- \phi (x)&=(s+n)c_{n,1-s}\lim_{\ez\to 0,\;N\to\infty} \int_{\ez<|y|<N} \left(\frac {y}{|y|}\right) \left(\frac{\phi (x-y)}{|y|^{n+s}}\right)\, dy\\
&=c_{n,s,-}\lim_{\ez\to 0,\;N\to\infty} \int_{\ez<|x-y|<N}\left( \frac {x-y}{|x-y|}\right)\left(\frac{\phi (x)-\phi (y)}{|x-y|^{n+s}}\right)\, dy\\
&=c_{n,s,-}\int_\rn \left(\frac {x-y}{|x-y|}\right)\left( \frac{\phi (x)-\phi (y)}{|x-y|^{n+s}}\right)\, dy.
\end{align*}
\end{proof}

\begin{lemma}\label{lem-add1}
If $(s,j)\in(0,1)\times\{1,2,\dots,n\}$, then $\nabla^s_j$ maps $\cs$ into $\cs_s$.
\end{lemma}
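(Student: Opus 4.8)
The plan is to reduce everything to the integral representation from Lemma~\ref{lem-integral-1}, namely $\nabla^s_j\phi=I_{1-s}(\partial_{x_j}\phi)$, and to exploit that $\partial_{x_j}\phi$ --- like every derivative of a Schwartz function --- integrates to zero over $\rn$. Fix $\phi\in\cs$ and put $\psi=\partial_{x_j}\phi\in\cs$, so that $\nabla^s_j\phi=I_{1-s}\psi=c_{n,1-s}\,k\ast\psi$ with $k(y)=|y|^{(1-s)-n}$; here $0<1-s<n$ makes $k$ locally integrable and bounded away from the origin.

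First I would settle the smoothness. Since $k\in L^1(\{|y|<1\})$ is bounded on $\{|y|\ge1\}$ and $\psi\in\cs$, for every multi-index $\beta$ and every $x_0$ the function $y\mapsto|k(y)|\sup_{|x-x_0|\le1}|D^\beta\psi(x-y)|$ admits an integrable majorant independent of $x$ near $x_0$; differentiating under the integral sign then gives $\nabla^s_j\phi\in C^\infty$ together with the identity $D^\beta\nabla^s_j\phi=I_{1-s}(D^\beta\psi)$ for all $\beta$. Note that $D^\beta\psi=D^{\beta+e_j}\phi\in\cs$ still has $\widehat{D^\beta\psi}(0)=0$, i.e.\ vanishing integral over $\rn$.

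Next comes the decay, which is the heart of the matter. I would prove the claim: \emph{if $g\in\cs$ and $\int_\rn g=0$, then $|I_{1-s}g(x)|\lesssim(1+|x|)^{-(n+s)}$.} For $|x|\le2$ this is immediate because $I_{1-s}g$ is continuous. For $|x|>2$, adding the vanishing quantity $|x|^{(1-s)-n}\int_\rn g$ one writes $I_{1-s}g(x)=c_{n,1-s}\int_\rn\big(|x-y|^{(1-s)-n}-|x|^{(1-s)-n}\big)g(y)\,dy$ (the integral being absolutely convergent), and splits it at $|y|=|x|/2$. On $|y|<|x|/2$ every point of the segment $[x-y,x]$ has modulus $\ge|x|/2$, so the mean value theorem gives $\big||x-y|^{(1-s)-n}-|x|^{(1-s)-n}\big|\lesssim|y|\,|x|^{(1-s)-n-1}=|y|\,|x|^{-(n+s)}$, whence this part is $\lesssim|x|^{-(n+s)}\int_\rn|y|\,|g(y)|\,dy$. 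On $|y|\ge|x|/2$ the rapid decay of $g$ bounds $|x|^{(1-s)-n}\int_{|y|\ge|x|/2}|g|$ by $O(|x|^{-N})$ for every $N$, and, writing $|g(y)|\lesssim(1+|y|)^{-n-1}(1+|x|)^{-N}$ there, bounds $\int_{|y|\ge|x|/2}|x-y|^{(1-s)-n}|g(y)|\,dy$ by $O(|x|^{-N})$ as well. Adding these contributions proves the claim.

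Finally, applying the claim to $g=D^\beta\psi=D^{\beta+e_j}\phi$ for each multi-index $\beta$ and combining with the smoothness step yields $\sup_{x\in\rn}(1+|x|^{n+s})|D^\beta\nabla^s_j\phi(x)|<\infty$ for all $\beta$, that is, $\nabla^s_j\phi\in\cs_s$. The one genuine obstacle is the decay estimate: its whole point is that the cancellation $\int g=0$ upgrades the generic Riesz-potential decay $|x|^{(1-s)-n}$ to $|x|^{(1-s)-n-1}=|x|^{-(n+s)}$, and without this cancellation the conclusion would fail. Beyond that, only routine care is needed to check that the subtraction inside the integral is legitimate and that the splitting estimates on $\{|x|>2\}$ are uniform in $x$.
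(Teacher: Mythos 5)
Your proof is correct, but it runs along a different track than the paper's. You invoke the identity $\nabla^s_j\phi=I_{1-s}(\partial_{x_j}\phi)$ from Lemma \ref{lem-integral-1} and reduce the whole lemma to a clean, reusable claim: if $g\in\cs$ has $\int_\rn g=0$, then $|I_{1-s}g(x)|\lesssim(1+|x|)^{-(n+s)}$, the cancellation being harvested by subtracting $|x|^{(1-s)-n}\int_\rn g=0$ and applying the mean value theorem to the kernel $|y|^{(1-s)-n}$ on $\{|y|<|x|/2\}$; the needed vanishing moment is automatic because $D^\beta\partial_{x_j}\phi=D^{\beta+e_j}\phi$ is a genuine derivative. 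The paper instead stays with the difference-quotient form of Lemma \ref{lem-integral-1}, commutes $D^\alpha$ with $\nabla^s_j$ via the Fourier transform, splits at $|x-y|=(1+|x|)/2$, kills the $D^\alpha\phi(x)$ contribution on the far region by the odd symmetry of the kernel $(x_j-y_j)|x-y|^{-(n+s+1)}$, and applies the mean value theorem to $D^\alpha\phi$ (not to the kernel) on the near region, using Schwartz decay of the next-order derivatives. So the same one-power gain of decay is produced by two different cancellation mechanisms: a vanishing moment of the derivative in your argument, kernel antisymmetry in theirs. Your route has the advantage of isolating a quotable statement about Riesz potentials with one vanishing moment (and makes transparent why the conclusion would fail without the cancellation), while the paper's argument avoids passing through the $I_{1-s}\nabla$ identity and works directly with the fractional-gradient integral. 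Your treatment of smoothness (differentiation under the integral sign with a locally integrable kernel against a Schwartz function) and the uniform estimates on the far region are all in order.
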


\begin{proof}
Suppose
$$\az=(\az_1,\dots,\az_n)\in\zz_+^n.$$
Since
$$\phi\in\cs\Rightarrow {D^\alpha} \phi=\partial_{x_1}^{\alpha_1}\cdots\partial_{x_n}^{\az_n}\phi\in\cs,
$$
the Fourier transform gives
$$
D^\alpha \nabla^s_j\phi= \nabla^s_j(D^\alpha\phi).
$$
This, combined with the integral representation of $\nabla^s_j{D^\alpha}\phi$ given in Lemma \ref{lem-integral-1}, yields
\begin{align*}
|D^\alpha \nabla^s_j\phi(x)|
\approx \lf|\int_\rn \left(\frac{x_j-y_j}{|x-y|}\right)\left(\frac{D^\alpha\phi(x)-D^\alpha\phi(y)}{|x-y|^{n+s}}\right)\,dy\r|\qquad\forall \ \ x\in\rn.
\end{align*}
Clearly,
\begin{align*}
\lf|\int_{|x-y|\ge(1+|x|)/2} \left(\frac{x_j-y_j}{|x-y|}\right)\left(\frac{D^\alpha\phi(x)-D^\alpha\phi(y)}{|x-y|^{n+s}}\right)\,dy\r|
& =\lf|\int_{|x-y|\ge(1+|x|)/2} \left(\frac{x_j-y_j}{|x-y|}\right)\left(\frac{D^\alpha\phi(y)}{|x-y|^{n+s}}\right)\,dy\r|\\
&\ls (1+|x|)^{-(n+s)}\|D^\alpha\phi\|_{L^1}.
\end{align*}
Also, the mean-value theorem derives
\begin{align*}
&\lf|\int_{|x-y|<(1+|x|)/2} \left(\frac{x_j-y_j}{|x-y|}\right)\left(\frac{D^\alpha\phi(x)-D^\alpha\phi(y)}{|x-y|^{n+s}}\right)\,dy\r|\\
&\quad\ls\int_{|x-y|<(1+|x|)/2} |x-y|^{1-s-n}\sup_{\tz\in(0,1)}\sup_{|\bz|=|\az|+1}\lf|D^\beta\phi(\tz x+(1-\tz)y)\r|\,dy\\
&\quad\ls (1+|x|)^{-(n+s)}.
\end{align*}
Combining the above two estimates gives
$$|D^\alpha \nabla^s_j\phi(x)|\ls  (1+|x|)^{-(n+s)}\qquad\forall \ \ x\in\rn,$$
and so $$\nabla^s_j\phi\in\cs_s.$$
\end{proof}

 Lemma \ref{lem-add1} can be used to extend the definition of $\nabla^s_-$ to all distributions in $\cs_s'$.

\begin{definition}\label{defn4}
For $(s,f)\in(0,1)\times\cs_s'$ let
$$\nabla^s_- f=(\nabla^s_1f,\nabla^s_2f,\dots,\nabla^s_nf),$$
where $\nabla^s_j \phi$ is defined by
$$\laz \nabla^s_j f, \phi\raz=-\laz f, \nabla^s_j\phi\raz\qquad \forall\ \ \phi\in\cs.$$
\end{definition}

Like Definition \ref{defn1} made for $\nabla^s_+$, we have also the
integral representing of $\nabla^s_-f$ whenever $f\in\mathbb L_s$ has local H\"older regularity.

\begin{lemma}\label{lem-integral}
Let $(s,f,x)\in(0,1)\times\mathbb L_s\times\rn$. If $f$ has the H\"older continuity of order ${s+\ez}$ in a neighborhood $\Omega$ of $x$ for some $\ez\in(0, 1-s]$, then {$\nabla^s_- f$ is continuous at $x$ and }
\begin{align}\label{nabla-s}
\nabla^s_- f(x)=c_{n,s,-} \int_{\mathbb R^n}\left(\frac{x-y}{|x-y|}\right)\left(\frac{f(x)-f(y)}{|x-y|^{n+s}}\right)\,dy
\ \ \text{with}\ \
c_{n,s,-}=\frac{2^{s}\Gamma\big(\frac{n+s+1}{2}\big)}{\pi^{\frac n2}\Gamma\big(\frac{1-s}{2}\big)}.
\end{align}
\end{lemma}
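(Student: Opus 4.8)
\emph{Plan.} The statement is the exact analogue, for $\nabla^s_-$, of the integral representation of $\nabla^s_+$ recorded in Definition \ref{defn1}(ii), so I would follow the same philosophy: localize near $x$, reduce the compactly supported part of $f$ to the Schwartz class by mollification so that Lemma \ref{lem-integral-1} applies verbatim, and handle the far-away part of $f$ by a direct Fubini computation.

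Concretely, fix $x$ and let $\Omega$ be the neighborhood on which $f$ is H\"older continuous of order $s+\ez$. Choose concentric open balls $\Omega'\subset\overline{\Omega'}\subset B\subset\overline B\subset\Omega$ centered at $x$, and $\eta\in C_c^\infty$ with $0\le\eta\le1$, $\eta\equiv1$ on $B$, and $\supp\eta\subset\Omega$. Write $f=f_1+f_2$ with $f_1=\eta f$ and $f_2=(1-\eta)f$; then $f_1$ is globally H\"older of order $s+\ez$ with compact support (using $s+\ez\le1$ and that $f$ is bounded on $\supp\eta$), while $f_2\in\mathbb L_s$ vanishes on $B\supset\overline{\Omega'}$. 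The first step is to check that the integral on the right of \eqref{nabla-s} converges absolutely for each $z\in\Omega'$: picking $r>0$ with $\overline{B(z,r)}\subset\Omega$ and splitting the domain into $B(z,r)$ and its complement, the H\"older bound makes the integrand $\ls|z-y|^{\ez-n}\in L^1(B(z,r))$ near the diagonal, while on the complement the $f(z)$-part is $\ls|f(z)|\int_{|z-y|\ge r}|z-y|^{-n-s}\,dy<\fz$ and the $f(y)$-part is bounded by $\int_\rn\frac{|f(y)|}{|z-y|^{n+s}}\,dy<\fz$, finite since $f\in L^1_{\loc}$ controls $|y|\le2|z|$ and $f\in\mathbb L_s$ (where $|z-y|\approx|y|$) controls $|y|>2|z|$. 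Call this integral $g(z)=(g_1(z),\dots,g_n(z))$; running the same splitting for $f_1$ and $f_2$ separately writes $g=g^{(1)}+g^{(2)}$ with each piece absolutely convergent, and a standard dominated-convergence argument (the near-diagonal part is uniformly small on $\Omega'$ by the uniform H\"older bound, the far part is continuous in $z$) shows $g^{(1)},g^{(2)}$, hence $g$, are continuous on $\Omega'$.

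Next I would identify $\nabla^s_- f_1$ with $g^{(1)}$ as distributions on $\Omega'$. Let $\rho_\delta$ be a standard mollifier and $f_{1,\delta}=f_1*\rho_\delta\in C_c^\infty\subset\cs$; mollification keeps the supports in a fixed compact set, gives $\sup_\rn|f_{1,\delta}-f_1|\to0$, and does not increase the H\"older seminorm of $f_1$. By Lemma \ref{lem-integral-1},
$$
\nabla^s_j f_{1,\delta}(z)=c_{n,s,-}\int_\rn\lf(\frac{z_j-y_j}{|z-y|}\r)\lf(\frac{f_{1,\delta}(z)-f_{1,\delta}(y)}{|z-y|^{n+s}}\r)\,dy\qquad\forall\ z\in\Omega',
$$
and letting $\delta\to0$, dominated convergence with the uniform H\"older/uniform-convergence bounds (same splitting as above) sends the right-hand side pointwise on $\Omega'$ to $g^{(1)}_j(z)$. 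On the other hand $f_{1,\delta}\to f_1$ in $\mathbb L_s$, so for every $\psi\in C_c^\infty(\Omega')$,
$$
\laz\nabla^s_j f_{1,\delta},\psi\raz=-\int_\rn f_{1,\delta}(y)\,\nabla^s_j\psi(y)\,dy\longrightarrow -\int_\rn f_1(y)\,\nabla^s_j\psi(y)\,dy=\laz\nabla^s_j f_1,\psi\raz,
$$
the pairings being legitimate since $\nabla^s_j\psi\in\cs_s$ by Lemma \ref{lem-add1} and $f_1,f_{1,\delta}\in\mathbb L_s$. Comparing the two limits gives $\nabla^s_j f_1=g^{(1)}_j$ in $\mathcal D'(\Omega')$ for each $j$.

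Finally I would identify $\nabla^s_- f_2$ with $g^{(2)}$ on $\Omega'$ directly. Since $f_2\equiv0$ on $B$, one has $g^{(2)}_j(z)=-c_{n,s,-}\int_\rn\big(\tfrac{z_j-y_j}{|z-y|}\big)\tfrac{f_2(y)}{|z-y|^{n+s}}\,dy$ for $z\in\Omega'$, with the integrand supported in $\supp f_2\subset\rn\setminus B$, which has positive distance from $\overline{\Omega'}$. For $\psi\in C_c^\infty(\Omega')$, Definition \ref{defn4} gives $\laz\nabla^s_j f_2,\psi\raz=-\int_\rn f_2(y)\nabla^s_j\psi(y)\,dy$; inserting the integral formula of Lemma \ref{lem-integral-1} for $\nabla^s_j\psi$, using $\psi\equiv0$ off $\Omega'$, and applying Fubini — licit because $\int_{\rn\setminus B}\int_{\supp\psi}\frac{|f_2(y)|\,|\psi(w)|}{|y-w|^{n+s}}\,dw\,dy<\fz$ (finite by $f_2\in\mathbb L_s$ at infinity, $f_2\in L^1_{\loc}$ near $\supp\psi$, and $|y-w|$ bounded below there) — yields $\laz\nabla^s_j f_2,\psi\raz=\int_\rn g^{(2)}_j(w)\psi(w)\,dw$, i.e. $\nabla^s_j f_2=g^{(2)}_j$ in $\mathcal D'(\Omega')$. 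Adding the two identities, $\nabla^s_- f=g^{(1)}+g^{(2)}=g$ in $\mathcal D'(\Omega')$; since $g$ is continuous on the neighborhood $\Omega'$ of $x$, this proves that $\nabla^s_- f$ is continuous at $x$ and equals the right-hand side of \eqref{nabla-s}. I expect the only real obstacle to be technical bookkeeping: verifying that the splitting $g=g^{(1)}+g^{(2)}$ and the tail Fubini interchange are justified by absolute convergence — this is precisely where the weighted decay of $\mathbb L_s$ must be combined with local integrability — and checking that the distributional limit $\nabla^s_j f_{1,\delta}\to\nabla^s_j f_1$ is compatible with the pointwise limit of the integral formulas, which is what forces the use of the stability of the H\"older seminorm under mollification.
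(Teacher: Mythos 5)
Your proof is correct, and its engine is the same as the paper's: represent $\nabla^s_-$ of suitable Schwartz approximants by Lemma \ref{lem-integral-1}, pass to the limit simultaneously in the pointwise/locally uniform sense and in the $\cs_s'$-duality sense (using $\nabla^s_j\psi\in\cs_s$ from Lemma \ref{lem-add1} and convergence in $\mathbb L_s$), and identify the two limits. Where you genuinely diverge is in how the approximation is produced. The paper does no cutoff at all: it invokes, from the proof of Silvestre's Proposition~2.4, a single sequence $\{f_k\}\subset\cs$ that is uniformly bounded in $C^{s+\ez}(\Omega)$, converges uniformly to $f$ on a fixed $\Omega_0\Subset\Omega$, and converges to $f$ in $\mathbb L_s$; then one application of Lemma \ref{lem-integral-1} plus uniqueness of limits finishes the proof. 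You instead split $f=\eta f+(1-\eta)f$, mollify only the compactly supported H\"older piece $f_1$ (exploiting that mollification preserves the support up to $\delta$, the sup norm, and the $C^{s+\ez}$ seminorm), and dispose of the tail $f_2$ by a direct Fubini/duality computation with the nonsingular kernel, since $\supp f_2$ stays at positive distance from $\overline{\Omega'}$. Your route is more self-contained (no appeal to the construction inside \cite{Sil}) at the price of the extra bookkeeping you flagged — the absolute-convergence checks for the splitting $g=g^{(1)}+g^{(2)}$, the Fubini justification, and the implicit consistency of the pointwise formula for $\nabla^s_j$ on $\cs$ with the sign convention of Definition \ref{defn4} (anti-self-adjointness via the odd multiplier), a point the paper's argument also uses tacitly; the paper's route is shorter but leans on an external approximation lemma. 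Both yield continuity at $x$ in the same way, via locally uniform control of the near-diagonal part of the integral.
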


\begin{proof} Without loss of generality, we may assume that $\Omega$ is bounded and naturally $\Omega^\complement=\rn\setminus\Omega$ is unbounded. An application of both
$$\int_\Omega \frac{|f(x)-f(y)|}{|x-y|^{n+s}}\,dy \ls \int_\Omega |x-y|^{\ez-n}\,dy<\infty$$
and
\begin{align*}
\int_{ \Omega^\complement} \frac{|f(x)-f(y)|}{|x-y|^{n+s}}\,dy
\ls \int_{ \Omega^\complement} \frac{|f(x)|+|f(y)|}{(\dist(x, \Omega^\complement)+|y|)^{n+s}}\,dy
\ls |f(x)|+\|f\|_{\mathbb L_s}<\infty
\end{align*}
derives
$$\int_{\mathbb R^n}\frac{|f(x)-f(y)|}{|x-y|^{n+s}}\,dy<\infty,
$$
and hence the integral in the right-hand-side of \eqref{nabla-s} converges absolutely.

To show \eqref{nabla-s}, we take an arbitrary open set $\Omega_0\ni x$ compactly contained in $\Omega$. According to the proof of \cite[Proposition~2.4]{Sil}, there
exists a sequence $\{f_k\}_{k\in\nn} \subset\cs$ uniformly bounded in $C^{s+\ez}(\Omega)$, converging
uniformly to $f$ in $\Omega_0$ and also converging to $f$ in the norm of $\mathbb L_s$.
For any $k\in\nn$, since $f_k\in\cs$, we utilize Lemma \ref{lem-integral-1} to write
\begin{align*}
\nabla^s_- f_k(x)=c_{n,s,-}\int_\rn \left(\frac {x-y}{|x-y|}\right)\left( \frac{f_k(x)-f_k(y)}{|x-y|^{n+s}}\right)\, dy\qquad\forall\ \ x\in\rn.
\end{align*}
From the uniform
bound on the $C^{s+\ez}$-norm of $f_k$ in $\Omega_0$ it follows that
$$\int_{\mathbb R^n}\left(\frac{x-y}{|x-y|}\right)\left(\frac{f_k(x)-f_k(y)}{|x-y|^{n+s}}\right)\,dy
\to \int_{\mathbb R^n}\left(\frac{x-y}{|x-y|}\right)\left(\frac{f(x)-f(y)}{|x-y|^{n+s}}\right)\,dy$$
uniformly in $\Omega_0$ as $k\to\infty$.
Since $\{f_k\}_{k\in\nn}$ converges to $f$ in the norm of $\mathbb L_s$, it follows easily that
$$\nabla^s_- f_k\to \nabla^s_-f\ \ \ \textup{in}\ \ \cs_0'.
$$
Accordingly, $\nabla^s_-f(x)$ must
coincide with $$c_{n,s,-}\int_{\mathbb R^n}\left(\frac{x-y}{|x-y|}\right)\left(\frac{f(x)-f(y)}{|x-y|^{n+s}}\right)\,dy$$ in $\Omega_0$ by the uniqueness of the limits. So, \eqref{nabla-s} holds.
\end{proof}

Below is more information on $\nabla^s_-$.

\begin{lemma}\label{lem-add3}
Let $s\in(0,1)$.
\begin{enumerate}
\item[\rm (i)] If $\phi\in\cs_\infty$, then it holds pointwisely on $\rn$ that
\begin{align*}
\nabla^s_-\phi
=I_{1-s}\nabla\phi
=\nabla I_{1-s}\phi
=\vec{R}(-\Delta)^\frac s2 \phi
=(-\Delta)^\frac s2 \vec{R} \phi.
\end{align*}

\item[\rm (ii)]
If $\phi\in\cs_s'$, then $\nabla^s_-\phi
=I_{1-s}\nabla\phi$ in $\cs'/\mathcal P$.

\item[\rm (iii)] If  $\phi\in \cs$, then all  identities in (i) hold almost everywhere on $\rn$.
\end{enumerate}

\end{lemma}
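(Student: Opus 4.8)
The plan is to establish the three assertions in the order (i)$\,\to\,$(ii)$\,\to\,$(iii): part (i) is pure Fourier-multiplier bookkeeping on the class $\cs_\infty$, part (ii) is a formal duality consequence of (i), and part (iii) is (i) supplemented by a regularity argument that lets the distributional identities be read pointwise. For (i) I would first record that on $\cs_\infty$ every operator in sight is a Fourier multiplier mapping $\cs_\infty$ into itself: $(-\Delta)^{\frac s2}$ does so by Definition \ref{defn0}(ii), and the identical argument applies to $I_{1-s}=(-\Delta)^{-\frac{1-s}{2}}$, since although its symbol $(2\pi|\xi|)^{s-1}$ is singular at the origin, for $\phi\in\cs_\infty$ the function $\hat\phi$ and all its derivatives vanish there to infinite order, so a Leibniz estimate ($|D^\alpha(|\xi|^{s-1})|\lesssim|\xi|^{s-1-|\alpha|}$ near $0$, against the rapid vanishing of $D^\beta\hat\phi$) gives $(2\pi|\xi|)^{s-1}\hat\phi\in\cs_\infty$; likewise $R_j$ (symbol $C^\infty$ off the origin and bounded at it) and trivially $\nabla$. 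Hence $\nabla^s_j$, $I_{1-s}\partial_j$, $\partial_j I_{1-s}$, $R_j(-\Delta)^{\frac s2}$ and $(-\Delta)^{\frac s2}R_j$ all carry $\cs_\infty$ into $\cs_\infty$, and since the Fourier transform is a homeomorphism of $\cs_\infty$, the five claimed pointwise identities collapse to the single assertion that the corresponding symbols coincide. That in turn is immediate: $\nabla^s_-\phi=I_{1-s}\nabla\phi$ on $\cs_\infty$ is already contained in Lemma \ref{lem-integral-1}; $I_{1-s}$ commutes with the constant-coefficient operator $\nabla$; $\partial_j I_{1-s}=\big(\partial_j(-\Delta)^{-\frac12}\big)(-\Delta)^{\frac s2}$ with $\partial_j(-\Delta)^{-\frac12}=R_j$ at the symbol level; and $(-\Delta)^{\frac s2}$ and $R_j$ are two Fourier multipliers, hence commute. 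The only thing to watch is that the normalizing constants $c_{n,\alpha}$ in $I_\alpha=(-\Delta)^{-\frac\alpha2}$ and the constants $c_{n,s,\pm}$ are chosen exactly so that the powers of $2\pi$ match.

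For (ii) I would fix a test function $\psi\in\cs_\infty$, the test space for $\cs'/\mathcal P$ by Definition \ref{defn0}(iii), and unwind definitions. Since $\nabla^s_j\psi\in\cs_s$ by Lemma \ref{lem-add1}, Definition \ref{defn4} legitimately gives $\langle\nabla^s_j\phi,\psi\rangle=-\langle\phi,\nabla^s_j\psi\rangle$ for $\phi\in\cs_s'$; by (i) applied to $\psi\in\cs_\infty$ one has $\nabla^s_j\psi=\partial_j I_{1-s}\psi$, with both $I_{1-s}\psi$ and $\partial_j I_{1-s}\psi$ lying in $\cs_\infty\subset\cs_s$. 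Then the distributional product rule and the adjoint relation for $I_{1-s}$ on $\cs'/\mathcal P$ from Definition \ref{defn0}(iii) yield $-\langle\phi,\partial_j I_{1-s}\psi\rangle=\langle\partial_j\phi,I_{1-s}\psi\rangle=\langle I_{1-s}\partial_j\phi,\psi\rangle$. As $\psi\in\cs_\infty$ was arbitrary, $\nabla^s_-\phi=I_{1-s}\nabla\phi$ in $\cs'/\mathcal P$.

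For (iii), let $\phi\in\cs$. The identity $\nabla^s_-\phi=I_{1-s}\nabla\phi$ is exactly the pointwise identity noted inside the proof of Lemma \ref{lem-integral-1}. For the remaining ones I would pass to the Fourier side: $\widehat{I_{1-s}\phi}=(2\pi|\xi|)^{s-1}\hat\phi\in L^1(\rn)$ (near the origin dominated by $|\xi|^{s-1}$, integrable since $s-1>-n$, and rapidly decaying at infinity), so $I_{1-s}\phi$ is bounded and continuous, and its distributional derivative $\partial_j I_{1-s}\phi$ has Fourier transform $2\pi i\xi_j(2\pi|\xi|)^{s-1}\hat\phi\in L^1(\rn)$ (now dominated near the origin by $|\xi|^s$), hence is again continuous; thus $I_{1-s}\phi\in C^1$ and $\nabla I_{1-s}\phi=I_{1-s}\nabla\phi$ everywhere. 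Finally $(-\Delta)^{\frac s2}\phi\in\cs_s$ by Definition \ref{defn0}(i) and $R_j\phi\in C^\infty\cap\mathbb L_s$ (the convolution of the principal-value kernel with a Schwartz function), so Definition \ref{defn1}(ii) and Lemma \ref{lem-integral} make $\vec R(-\Delta)^{\frac s2}\phi$ and $(-\Delta)^{\frac s2}\vec R\phi$ continuous on $\rn$; then the symbol identities of (i), now read as identities between tempered distributions, force all five continuous functions to coincide, a fortiori almost everywhere.

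I expect the main obstacle to be exactly this last regularity step: one cannot simply differentiate under the Riesz-potential integral for $I_{1-s}\phi$, because $\nabla_x|x-y|^{1-s-n}$ is comparable to $|x-y|^{-n-s}\notin L^1_{\mathrm{loc}}$, so $\nabla I_{1-s}\phi=I_{1-s}\nabla\phi$ and the continuity of $\vec R(-\Delta)^{\frac s2}\phi$ have to be obtained on the Fourier side or from the convolution-smoothing of $R_j$ against a Schwartz function, as above. Apart from that, the only part demanding care rather than ideas is tracking the normalizations $c_{n,\alpha}$, $c_{n,s,\pm}$ so that the symbol identities of (i) hold on the nose.
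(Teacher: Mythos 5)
Your treatment of (i) and (ii) is essentially the paper's own argument: for (i) you observe that $(-\Delta)^{\frac s2}$, $I_{1-s}$, $R_j$ and $\partial_{x_j}$ all preserve $\cs_\infty$, so the five identities reduce to equality of Fourier symbols, and your (ii) is exactly the paper's duality computation (Definition \ref{defn4}, part (i) applied to $\psi\in\cs_\infty$, then the adjoint action of $I_{1-s}$ on $\cs'/\mathcal P$). Where you genuinely diverge is (iii). The paper first shows the identities hold in $\cs'/\mathcal P$ for $\phi\in\cs$ by repeating the argument of (ii) (this is where it needs the density of $\cs_\infty$ in $L^p$, obtained from the Calder\'on reproducing formula), and then removes the polynomial ambiguity by noting all members are $L^2$ functions and invoking $\|u\|_{L^2}=\sup\{|\laz u,\phi\raz|:\ \phi\in\cs_\infty,\ \|\phi\|_{L^2}\le1\}$, which gives equality a.e. You instead stay in $\cs'$: each of the five expressions is shown to be (a.e.\ equal to) a continuous function whose Fourier transform is the common $L^1$ function $\pm2\pi i\xi_j(2\pi|\xi|)^{s-1}\hat\phi$, so they coincide everywhere. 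This buys a stronger (pointwise) conclusion and avoids the modulo-polynomial detour, but two points you gloss need repair: the continuity of $\vec R(-\Delta)^{\frac s2}\phi$ does not follow from Definition \ref{defn1}(ii) or Lemma \ref{lem-integral} (those treat $\nabla^s_\pm$ of locally H\"older $\mathbb L_s$-functions; what you need, continuity of $R_jg$ for $g\in\cs_s$, is elementary but must be argued directly), and ``reading the symbol identities as identities between tempered distributions'' requires a short Plancherel/pairing computation showing that the duality-defined objects $(-\Delta)^{\frac s2}R_j\phi$ and $\nabla^s_j\phi$ have the stated Fourier transforms in $\cs'$ rather than merely in $\cs'/\mathcal P$ — precisely the step the paper replaces by the $L^2$-density argument. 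Finally, your explicit symbol claim $\partial_{x_j}(-\Delta)^{-\frac12}=R_j$ is sign-convention sensitive: with the paper's stated Fourier transform and Riesz kernel the symbol of $R_j$ is $-i\xi_j/|\xi|$ while that of $\partial_{x_j}(-\Delta)^{-\frac12}$ is $+i\xi_j/|\xi|$; Definition \ref{defn2} and the first line of the proof of Lemma \ref{lem-integral-1} exhibit the same tension, so you inherit rather than create this ambiguity, but a careful write-up should fix the convention so the chain of equalities holds without a stray sign.
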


\begin{proof}
(i) Via the Fourier transform, we see that
$$
I_{1-s},\ \ (-\Delta)^\frac s2\ \ \&\ \ R_{1\le j\le n}
$$
map $\cs_\infty$ into $\cs_\infty$. Then, taking the inverse Fourier transform verifies the assertion in (i).

(ii) Let
$$(\phi,j,\psi)\in\cs_s'\times\{1,2,\dots,n\}\times\cs_\infty.
$$
Then by the just-checked (i) and Definition \ref{defn4} we have
\begin{align*}
 \laz\nabla^s_j\phi,\psi\raz
=-\laz \phi, \nabla^s_j\psi\raz
=
-\laz \phi, \partial_{x_j} I_{1-s}\psi\raz
\end{align*}
Further, since
$$\phi\in\cs_s'\Rightarrow\phi\in\cs'\Rightarrow\partial_{x_j}\phi\in\cs'\subset\cs'/\mathcal P,
$$
this implication, along with
the fact that $I_{1-s}$ maps $\cs'/\mathcal P$ onto $\cs'/\mathcal P$, derives
$$
-\laz \phi, \partial_{x_j} I_{1-s}\psi\raz=\laz \partial_{x_j}\phi,  I_{1-s}\psi\raz
=\laz I_{1-s}\partial_{x_j}\phi,  \psi\raz,
$$
namely,
$$\nabla^s_j\phi= I_{1-s}\partial_{x_j}\phi\qquad \text{in}\ \ \cs'/\mathcal P.$$

(iii)  Observe that $\cs_\infty$ is dense in $L^p$ whenever $p\in(1,\infty)$.
Indeed, this follows easily from the fact that
 the Calder\'on reproducing formula of an $L^p$-function $$
f=\int_0^\infty \varphi_t\ast\psi_t\ast f\,\frac{dt}{t}
$$
holds in $L^p$ (cf. \cite[p.8, Theorem~(1.2)]{FJW} for $p=2$ and  \cite{Saeki} for general $p$),
with
$\psi,\varphi\in\cs_\infty$ satisfying
$$
\begin{cases}
\supp\hat\varphi,\supp\hat\psi\subset \{\xi\in\rn:\, 1/4\le |\xi|\le 4\}\\
\min\big\{|\hat\varphi(\xi)|, |\hat\psi(\xi)|\big\}>c\ \text{on}\  \{\xi\in\rn:\, 1/2\le |\xi|\le 2\}\\
\int_0^\infty \hat\varphi(t\xi)\hat\psi(t\xi)\, \frac{dt}{t}=1\ \text{for}\; \xi\neq0.
\end{cases}
$$
Thus, if $\phi\in\cs$, then a  discussion similar to (ii) yields that the identity in (i) holds in $\cs'/\mathcal P$.
Moreover,  by the density of $\cs_\infty$ in $L^2$ and the duality equality
$$
\|u\|_{L^2}
=\sup\lf\{|\laz u, \phi\raz|:\, \phi\in\cs_\infty, \|\phi\|_{L^2}\le 1\r\},
$$
we obtain that the identity in (i) holds in $L^2$ and hence almost everywhere on $\rn$.
\end{proof}

\subsection{Dense subspaces of $H^{s,1}\ \&\ H^{s,1}_+$}\label{s22}
Note that $\cs_\infty$ is dense in $H^1$. However, instead of $\cs_\infty$ we may consider the following larger space
\begin{align}\label{Ss0}
\cs_{0<s<\infty,0}=\lf\{f\in C^\infty:\, \int_\rn f(x)\, dx=0\; \&\;
\sup_{x\in\rn}(1+|x|)^{n+s}|f(x)|<\infty\r\}.
\end{align}

\subsubsection*{A dense subspace of $H^1$}\label{s221} As showed in the coming-up-next Lemma \ref{lem-H1} whose argument relies on the radial maximal function characterization of the Hardy space $H^1$ (cf. \cite{St2}), the class defined by \eqref{Ss0} is a dense subspace of $H^1$.
To see this, recall that if
\begin{align}\label{phi}
\begin{cases}
0\le\phi\in \cs\\
\int_\rn \phi(x)\, dx=1\\
\phi_t(x)=t^{-n}\phi(t^{-1}x)\;\forall\, (t,x)\in(0,\infty)\times\rn,
\end{cases}
\end{align}
then
$$
H^1=\lf\{
f\in\cs':\  f^+=\sup_{t\in(0,\infty)} |\phi_t\ast f|\in L^1
\r\}\ \ \text{with}\ \
\|f\|_{H^1}\approx \| f^+\|_{L^1}.
$$
We are led to discover the following density for $H^1$.

\begin{lemma}\label{lem-H1}
Let $s\in(0,\infty)$. Then any locally integrable function $f$ on $\rn$ with
$$
\int_\rn f(x)\, dx=0\ \ \&\ \
\sup_{x\in\rn}(1+|x|)^{n+s}|f(x)|<\infty
$$
belongs to the Hardy space $H^1$. Consequently, $\cs_{s,0}$ is dense in $H^1$. Moreover,
$$\lf\{(-\Delta)^\frac s2\phi:\, \phi\in\cs\r\}\subset \cs_{s,0}\subset H^1\qquad \forall\ s\in(0,1).$$

\end{lemma}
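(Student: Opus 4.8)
The strategy is to verify $f\in H^1$ through the radial maximal function characterization recalled just above, namely to show that $f^+=\sup_{t\in(0,\infty)}|\phi_t\ast f|\in L^1$ for $\phi$ fixed as in \eqref{phi}. First, the pointwise bound $|f(x)|\le M(1+|x|)^{-n-s}$ with $M=\sup_x(1+|x|)^{n+s}|f(x)|<\infty$, together with $s>0$, forces $f\in L^1\cap L^\infty$; hence $f\in\cs'$ and $f^+$ is a well-defined measurable function. On $\{|x|\le 2\}$ we have $f^+(x)\le\|\phi_t\|_{L^1}\|f\|_{L^\infty}=\|f\|_{L^\infty}$, so $f^+\in L^1(\{|x|\le 2\})$, and the whole matter reduces to controlling $f^+(x)$ for $|x|$ large.

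For $|x|>2$ and $t>0$ I would split $\phi_t\ast f(x)=\int_{|y|<|x|/2}\phi_t(x-y)f(y)\,dy+\int_{|y|\ge|x|/2}\phi_t(x-y)f(y)\,dy$. On the far region $|f(y)|\ls|x|^{-n-s}$ uniformly, so its contribution is $\ls|x|^{-n-s}\|\phi_t\|_{L^1}=|x|^{-n-s}$, which is integrable in $x$. On the near region I invoke the cancellation $\int_\rn f=0$ — this is precisely where the mean-zero hypothesis substitutes for the regularity that $f$ is not assumed to have — to write
$$\int_{|y|<|x|/2}\phi_t(x-y)f(y)\,dy=\int_{|y|<|x|/2}\big[\phi_t(x-y)-\phi_t(x)\big]f(y)\,dy-\phi_t(x)\int_{|y|\ge|x|/2}f(y)\,dy.$$
The rapid decay of $\phi\in\cs$ yields the $t$-uniform bounds $|\phi_t(x)|\le C_n(t+|x|)^{-n}\le C_n|x|^{-n}$ and, by the mean value theorem along the segment from $x$ to $x-y$ (which lies in $\{|z|>|x|/2\}$ whenever $|y|<|x|/2$), $|\phi_t(x-y)-\phi_t(x)|\le|y|\sup_{0\le\tz\le 1}|\nabla\phi_t(x-\tz y)|\ls|y|\,|x|^{-n-1}$. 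Combined with $\big|\int_{|y|\ge|x|/2}f\big|\ls|x|^{-s}$ and $\int_{|y|<|x|/2}|y|\,|f(y)|\,dy\ls\int_0^{|x|/2}(1+r)^{-s}\,dr$ — which is $\ls 1$ if $s>1$, $\ls\ln|x|$ if $s=1$ and $\ls|x|^{1-s}$ if $0<s<1$ — this gives, for all $|x|>2$,
$$f^+(x)\ls|x|^{-n-s}+|x|^{-n-1}\int_0^{|x|/2}(1+r)^{-s}\,dr\ls(1+|x|)^{-n-\ez}$$
for some $\ez=\ez(s)>0$. Therefore $f^+\in L^1(\rn)$ and $f\in H^1$ with $\|f\|_{H^1}\approx\|f^+\|_{L^1}$.

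The remaining statements are immediate consequences. By \eqref{Ss0}, $\cs_{s,0}$ is the subset of $C^\infty$ functions obeying exactly the hypotheses just treated, so $\cs_{s,0}\subset H^1$; and $\cs_\infty\subset\cs_{s,0}$, because every $f\in\cs_\infty$ satisfies $\hat f(0)=\int_\rn f=0$ together with $\sup_x(1+|x|)^{n+s}|f(x)|<\infty$. Since $\cs_\infty$ is dense in $H^1$, so is $\cs_{s,0}$. Finally, for $s\in(0,1)$ and $\phi\in\cs$, Definition \ref{defn0}(i) gives $(-\Delta)^{\frac s2}\phi\in\cs_s$, so $(-\Delta)^{\frac s2}\phi$ is smooth with $|(-\Delta)^{\frac s2}\phi(x)|\ls(1+|x|^{n+s})^{-1}\ls(1+|x|)^{-n-s}$, whence $(-\Delta)^{\frac s2}\phi\in L^1$; and since its Fourier transform $(2\pi|\xi|)^s\hat\phi(\xi)$ vanishes at $\xi=0$ (because $s>0$), we get $\int_\rn(-\Delta)^{\frac s2}\phi\,dx=0$, that is, $(-\Delta)^{\frac s2}\phi\in\cs_{s,0}$.

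The principal obstacle is the large-$|x|$ estimate of $f^+$ in the second paragraph: one must organize the near/far decomposition so that the single cancellation $\int_\rn f=0$ fully compensates for the complete absence of smoothness of $f$, and then confirm that the resulting bound is genuinely $x$-integrable throughout the delicate borderline range $0<s\le 1$, where $\int_{|y|<|x|/2}|y|\,|f(y)|\,dy$ grows with $|x|$.
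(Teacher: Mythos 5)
Your proof is correct and follows essentially the same route as the paper's: the radial maximal function characterization of $H^1$, the near/far split at $|y|<|x|/2$, and the mean-zero cancellation played against a mean-value-theorem bound on $\phi_t$ that is uniform in $t$, yielding $f^+(x)\lesssim (1+|x|)^{-n-\ez}$. The only differences are bookkeeping — you keep the $|y|$ factor from the mean value theorem and therefore treat the borderline range $0<s\le 1$ explicitly, and you spell out the final inclusions that the paper dismisses as obvious — both of which are fine.
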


\begin{proof}
Let $\phi$ and $\{\phi_t\}_{t\in(0,\infty)}$ be as in \eqref{phi}. By the radial maximal function characterization of $H^1$, we only need to show that
\begin{align}\label{eq-aim}
|\phi_t\ast f(x)|\ls (1+|x|)^{-(n+\ez)} \qquad\forall\ \ (t,x)\in(0,\infty)\times\rn
\end{align}
holds for some $\ez\in(0,s)$. Indeed,
$$
\eqref{eq-aim}\Rightarrow f^+(x)\ls (1+|x|)^{-(n+\ez)}\ \ \forall\ \  x\in\rn\Rightarrow f^+\in L^1.
$$
However, \eqref{eq-aim} is verified by handling two {situations: $|x|\le 1$ and $|x|>1$}.

If $|x|\le 1$, then
$$
|\phi_t\ast f(x)|\ls \|f\|_{L^\infty}\int_\rn \phi_t(x-y)\, dy\ls 1\approx (1+|x|)^{-(n+\ez)}.
$$

If $|x|\ge 1$, then by the conditions of $f$ we write
\begin{align*}
|\phi_t\ast f(x)|
=\lf|\int_\rn (\phi_t(x-y)-\phi_t(x))f(y)\,dy\r|
\ls \int_\rn \frac{\lf|\phi_t(x-y)-\phi_t(x)\r|}{(1+|y|)^{(n+s)}}\, dy.
\end{align*}
On the one hand, the mean value theorem gives
\begin{align*}
&\int_{|y|<|x|/2} \lf|\phi_t(x-y)-\phi_t(x)\r| (1+|y|)^{-(n+s)}\, dy\\
&\quad\le \int_{|y|<|x|/2} t^{-n-1} \sup_{\theta\in(0,1)} \lf|\nabla\phi\lf(t^{-1}(x-\tz y)\r)\r|(1+|y|)^{-(n+s)}\, dy\\
&\quad\ls \int_{|y|<|x|/2} t^{-n-1} (1+t^{-1}|x|)^{-(n+1)}(1+|y|)^{-(n+s)}\, dy\\
&\quad\ls |x|^{-(n+1)}\int_{|y|<|x|/2}(1+|y|)^{-(n+s)}\, dy\\
&\quad\ls |x|^{-(n+1)}.
\end{align*}
On the other hand,
\begin{align*}
&\int_{|y|\ge |x|/2} \lf|\phi_t(x-y)-\phi_t(x)\r| (1+|y|)^{-(n+s)}\, dy\\
&\quad\ls (1+|x|)^{-(n+s)} \int_{|y|\ge |x|/2} \lf|\phi_t(x-y)\r| \, dy
+(1+|x|)^{-\ez} \int_{|y|\ge |x|/2} \frac{t^{-n}(1+t^{-1}|x|)^{-n}}{(1+|y|)^{n+s-\ez}}\, dy\\
&\quad\ls (1+|x|)^{-(n+s)}+|x|^{-(n+\ez)}\int_{|y|\ge |x|/2}(1+|y|)^{-(n+s-\ez)}\, dy\\
&\quad \ls (1+|x|)^{-(n+s)}+|x|^{-(n+\ez)}.
\end{align*}
Via combining the last three formulae we obtain
$$
|\phi_t\ast f(x)|\ls |x|^{-(n+\ez)}\approx (1+|x|)^{-(n+\ez)}\qquad  \forall\ \ |x|\ge 1,
$$
thereby reaching \eqref{eq-aim}.

{The remaining part of Lemma \ref{lem-H1} is obvious. }
\end{proof}

\subsubsection*{The first and second dense subspaces of $H^{s,1}$}\label{s222} Lemma \ref{lem-H1} produces the following property.
\begin{proposition}\label{prop1}
Let $s\in (0,1)$. Then

\begin{enumerate}
\item[\rm (i)] $H^{s,1}\cap \cs_\infty= I_s(\cs_\infty\cap H^1).$

\item[\rm (ii)] $I_s(H^1)\subset H^{s,1}$.

\item[\rm (iii)] For any $f\in H^{s,1}$ there exists $g\in H^1$ such that $f=I_s g$ in $\cs'/\mathcal P$.
\end{enumerate}

\end{proposition}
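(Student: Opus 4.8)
The plan is to read all three assertions off the single reciprocity identity $I_s(-\Delta)^{\frac{s}{2}}\phi=\phi=(-\Delta)^{\frac{s}{2}}I_s\phi$ of Lemma~\ref{lem-x1}, together with the bookkeeping facts that $(-\Delta)^{\frac{s}{2}}$ and $I_s$ are mutually inverse bijections of $\cs_\infty$ onto itself and of $\cs'/\mathcal P$ onto itself; the $\cs_\infty$-statement is obtained as in the proof of Lemma~\ref{lem-add3}(i), because the Fourier transform of a $\cs_\infty$-function vanishes to infinite order at the origin, so that multiplication by $(2\pi|\xi|)^{\pm s}$ leaves it in $\cs$ and still flat at $0$.

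\emph{Parts (i) and (iii).} For (i): if $u=I_sg$ with $g\in\cs_\infty\cap H^1$, then $u\in\cs_\infty$ by the remark above, and Lemma~\ref{lem-x1} gives $(-\Delta)^{\frac{s}{2}}u=(-\Delta)^{\frac{s}{2}}I_sg=g$, so $[u]_{H^{s,1}}=\|g\|_{H^1}<\infty$ and $u\in H^{s,1}\cap\cs_\infty$; conversely, if $u\in H^{s,1}\cap\cs_\infty$, then $g:=(-\Delta)^{\frac{s}{2}}u\in\cs_\infty$ by Definition~\ref{defn0}(ii) while $\|g\|_{H^1}=[u]_{H^{s,1}}<\infty$, whence $g\in\cs_\infty\cap H^1$ and $u=I_s(-\Delta)^{\frac{s}{2}}u=I_sg$ by Lemma~\ref{lem-x1}. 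For (iii): given $f\in H^{s,1}$, set $g:=(-\Delta)^{\frac{s}{2}}f$, which lies in $H^1$ by the very definition of $H^{s,1}$; for every $\phi\in\cs_\infty$ we have $I_s\phi\in\cs_\infty\subset\cs$, so by Definitions~\ref{defn0}(iii) and~\ref{defn1}(i) and Lemma~\ref{lem-x1},
\[
\langle I_sg,\phi\rangle=\langle g,I_s\phi\rangle=\langle(-\Delta)^{\frac{s}{2}}f,I_s\phi\rangle=\langle f,(-\Delta)^{\frac{s}{2}}I_s\phi\rangle=\langle f,\phi\rangle,
\]
that is, $f=I_sg$ in $\cs'/\mathcal P$.

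\emph{Part (ii).} Let $g\in H^1\subset L^1$. The crux is that the a.e.-defined Riesz potential $I_sg(x)=c_{n,s}\int_{\rn}|x-y|^{s-n}g(y)\,dy$ belongs to $\mathbb L_s\subset\cs_s'$. This follows from Tonelli's theorem and the uniform kernel bound
\[
\int_{\rn}\frac{dx}{|x-y|^{n-s}(1+|x|)^{n+s}}\lesssim1\qquad\forall\ y\in\rn,
\]
obtained by splitting into the region $|x-y|<1$, where $(1+|x|)^{-(n+s)}\le1$ and $|x-y|^{s-n}$ is locally integrable, and the region $|x-y|\ge1$, where $|x-y|^{s-n}\le1$ and $(1+|x|)^{-(n+s)}$ is integrable; hence $\|I_sg\|_{\mathbb L_s}\lesssim\|g\|_{L^1}$ and $I_sg(x)<\infty$ for a.e.\ $x$. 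Since $(-\Delta)^{\frac{s}{2}}\psi\in\cs_s$ for $\psi\in\cs$, the same bound licenses Fubini's theorem, so that, by Definition~\ref{defn1}(i), Fubini and Lemma~\ref{lem-x1},
\[
\langle\nabla^s_+(I_sg),\psi\rangle=\langle I_sg,(-\Delta)^{\frac{s}{2}}\psi\rangle=\int_{\rn}g(y)\,I_s(-\Delta)^{\frac{s}{2}}\psi(y)\,dy=\int_{\rn}g(y)\psi(y)\,dy\qquad\forall\ \psi\in\cs.
\]
Therefore $(-\Delta)^{\frac{s}{2}}(I_sg)=\nabla^s_+(I_sg)=g$ in $\cs'$, and $[I_sg]_{H^{s,1}}=\|g\|_{H^1}<\infty$, i.e.\ $I_sg\in H^{s,1}$.

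\emph{Main obstacle.} The only genuinely analytic ingredient is the $\mathbb L_s$-bound and the attendant Fubini step in (ii); everything else is careful reconciliation of the meanings of $(-\Delta)^{\frac{s}{2}}$ — on $\cs_\infty$ (Definition~\ref{defn0}), on $\cs_s'$ via duality against $\cs$ (Definition~\ref{defn1}(i)), and on $\cs'/\mathcal P$ via duality against $\cs_\infty$ (Definition~\ref{defn0}(iii)) — together with the check that they agree when tested against $\cs_\infty$. The point to watch is that the pointwise Riesz potential of an $L^1$-function, the $\cs'$-distribution $\nabla^s_+(I_sg)$, and the $\cs'/\mathcal P$-distribution $I_sg$ all name the same object, which is precisely what the kernel bound and Fubini deliver.
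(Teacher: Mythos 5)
Your proposal is correct, and parts (i) and (iii) follow the paper's own route: (i) is the invariance of $\cs_\infty$ under $I_s$ and $(-\Delta)^{\frac s2}$ combined with Lemma \ref{lem-x1}, and (iii) is the identity $f=I_s\big((-\Delta)^{\frac s2}f\big)$ in $\cs'/\mathcal P$, which you simply verify by testing against $\cs_\infty$ rather than quoting that $I_s$ and $(-\Delta)^{\frac s2}$ are mutually inverse bijections of $\cs'/\mathcal P$.

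The one genuine divergence is in (ii). The paper first invokes the Stein--Weiss inequality \eqref{e1} to conclude $I_sg\in L^{\frac{n}{n-s}}\subset\cs_s'$, and only then runs the duality-plus-Fubini computation (with exactly the kernel estimate you use) to get $(-\Delta)^{\frac s2}I_sg=g$ in $\cs'$. You instead place $I_sg$ in the weighted space $\mathbb L_s\subset\cs_s'$ directly, via the uniform bound $\int_{\rn}|x-y|^{s-n}(1+|x|)^{-(n+s)}\,dx\lesssim1$ and Tonelli, so that only $\|g\|_{L^1}$ is needed. This buys a slightly more self-contained and elementary argument — membership of $I_sg$ in $\cs_s'$ no longer rests on the deeper $H^1\to L^{\frac{n}{n-s}}$ embedding that the proposition is partly meant to illuminate — at the cost of losing the extra information the paper's route records in passing, namely that $I_sg$ actually lies in $L^{\frac{n}{n-s}}$. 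Both arguments are complete; your Fubini justification and the identification of the three avatars of $(-\Delta)^{\frac s2}$ are exactly the points that need care, and you handle them as the paper does.
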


\begin{proof} (i)
For any $\phi\in\cs_\infty$, by the invariant property of $\cs_\infty$ under the action of $I_s$ or $(-\Delta)^\frac s2$, we get
$$
\phi\in H^{s,1}\Leftrightarrow(-\Delta)^\frac s2 \phi\in H^1,
$$
as desired.

(ii) If $f\in I_s(H^1)$, then
$$f=I_s g\ \ \text{for some}\ \ g\in H^1,
$$
and hence
 $$
 \eqref{e1}\Rightarrow f\in L^{\frac n{n-s}}.
 $$
 Of course, any function in $L^{\frac n{n-s}}$ belongs to  $ \cs_s'$. Accordingly, for any $\phi\in\cs$, by Lemma \ref{lem-x1} we have
 \begin{align*}
 \laz (-\Delta)^\frac s2 f, \phi\raz
 = \laz  f, (-\Delta)^\frac s2\phi\raz
 = \laz  I_s g, (-\Delta)^\frac s2\phi\raz
 =\laz   g, I_s(-\Delta)^\frac s2\phi\raz
 =\laz g, \phi\raz,
 \end{align*}
 where {in the penultimate equality} the Fubini theorem has been applied due to the implication
 that if $$g\in H^1\subset L^1\ \ \&\ \ (-\Delta)^\frac s2\phi\in \cs_s$$ then \begin{align*}
 \int_\rn \int_\rn |x-y|^{s-n} |g(y)||(-\Delta)^\frac s2\phi(x)|\, dx\, dy
 \ls \int_\rn \int_\rn \frac{|x-y|^{s-n} |g(y)| }{1+|x|^{n+s}}\, dx\, dy
 \ls \|g\|_{L^1}.
 \end{align*}
Therefore, we obtain
 $$
 (-\Delta)^\frac s2 f=g \qquad \text{in} \ \ \cs'.
 $$
Since $g$ belongs to $H^1$, so does $(-\Delta)^\frac s2 f$. This proves $$I_s(H^1)\subset H^{s,1}.$$

(iii) Recall that both $I_s$ and $(-\Delta)^\frac s2$ are one-to-one maps from $\cs'/\mathcal P$ to $\cs'/\mathcal P$. Thus, we have
 $$f\in\cs_s'\subset\cs'/\mathcal P\qquad \forall\ \ f\in H^{s,1},
$$ thereby getting
$$f=I_s\big((-\Delta)^\frac s2 f\big)\ \ \text{in}\ \  \cs'/\mathcal P$$
and so
$$f=I_s g\ \ \text{in}\ \ \cs'/\mathcal P\ \ \text{with}\ \  g=(-\Delta)^\frac s2 f\in H^1.
$$
\end{proof}

Next, we have the following density result.

\begin{proposition}\label{prop-dense1}
If $s\in(0,1)$, then $$\cs_\infty\subset\cs\subset H^{s,1}\subset H^{s,1}_\pm.
$$ Moreover, both  $\cs_\infty$ and $\cs$ are dense in $H^{s,1}$.
\end{proposition}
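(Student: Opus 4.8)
The plan is to reduce the whole statement to the already-established structure of the Hardy space $H^1$ via the operator $(-\Delta)^{\frac s2}$, which by definition realizes the seminorm $[\cdot]_{H^{s,1}}$ as the pull-back of $\|\cdot\|_{H^1}$. First I would dispose of the inclusions. That $\cs_\infty\subset\cs$ is definitional, and $\cs\subset\cs_s'$ holds because every Schwartz function lies in $\mathbb L_s\subset\cs_s'$; the inclusion $\cs\subset H^{s,1}$ is exactly the last line of Lemma \ref{lem-H1}, namely $(-\Delta)^{\frac s2}\phi\in\cs_{s,0}\subset H^1$, so that $[\phi]_{H^{s,1}}=\|(-\Delta)^{\frac s2}\phi\|_{H^1}<\fz$ for each $\phi\in\cs$. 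For $H^{s,1}\subset H^{s,1}_\pm$, take $u\in H^{s,1}$ and set $g:=(-\Delta)^{\frac s2}u=\nabla^s_+u\in H^1$; using $\nabla^s_-u=\vec R\,(-\Delta)^{\frac s2}u=\vec R g$ (Lemma \ref{lem-add3}(ii) together with $\nabla^s_-=\vec R(-\Delta)^{\frac s2}$), one gets
$$[u]_{H^{s,1}_+}+[u]_{H^{s,1}_-}=\|g\|_{L^1}+\|\vec R g\|_{L^1}=\|g\|_{H^1}=[u]_{H^{s,1}}<\fz,$$
which gives $u\in H^{s,1}_+\cap H^{s,1}_-$ with $[u]_{H^{s,1}_\pm}\le[u]_{H^{s,1}}$; this is just the decomposition $H^{s,1}=H^{s,1}_+\cap H^{s,1}_-$ with additive seminorms.

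For the density assertions, it suffices to prove that $\cs_\infty$ is $[\cdot]_{H^{s,1}}$-dense in $H^{s,1}$, since then the larger class $\cs$ is dense a fortiori. The idea is that $(-\Delta)^{\frac s2}$ is a bijection of $\cs_\infty$ onto itself with inverse $I_s$ (by Lemma \ref{lem-x1} and the invariance of $\cs_\infty$ under $I_s$ and $(-\Delta)^{\frac s2}$, Definition \ref{defn0}(ii)) and, by the definition of $[\cdot]_{H^{s,1}}$, it maps $(H^{s,1},[\cdot]_{H^{s,1}})$ isometrically, modulo constants, onto $H^1$. Thus, given $u\in H^{s,1}$, I would put $g=(-\Delta)^{\frac s2}u\in H^1$, use the density of $\cs_\infty$ in $H^1$ (recorded at the start of \S\ref{s22}) to choose $h_k\in\cs_\infty$ with $\|g-h_k\|_{H^1}\to0$, and set $\phi_k:=I_s h_k$. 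Then $\phi_k\in\cs_\infty\subset H^{s,1}$, and $(-\Delta)^{\frac s2}\phi_k=h_k$ by Lemma \ref{lem-x1}, whence
$$[u-\phi_k]_{H^{s,1}}=\big\|(-\Delta)^{\frac s2}u-(-\Delta)^{\frac s2}\phi_k\big\|_{H^1}=\|g-h_k\|_{H^1}\longrightarrow0,$$
so $\cs_\infty$, and therefore $\cs$, is dense in $H^{s,1}$.

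I expect the only genuinely delicate step to be the identification, used in the first paragraph, that for $u\in H^{s,1}$ the distribution $\nabla^s_-u\in\cs'$ (Definition \ref{defn4}) coincides \emph{in $\cs'$}, not merely in $\cs'/\mathcal P$, with the $L^1$-function $\vec R g$, $g=(-\Delta)^{\frac s2}u$ --- i.e. that no spurious polynomial contaminates $\nabla^s_-u$, so that $\|\nabla^s_-u\|_{L^1}$ is literally the Riesz-transform part of $\|g\|_{H^1}$. This is where the membership $g\in H^1$ enters: $\widehat g$ is continuous with $\widehat g(0)=0$, which pins down the low-frequency behaviour; the point is essentially contained in Lemma \ref{lem-add3} together with \eqref{e2a}. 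The remaining ingredients are routine: the transport of the known density of $\cs_\infty$ in $H^1$ through the isomorphism $(-\Delta)^{\frac s2}$, and the (standard, Parseval-based) consistency between the direct action of $(-\Delta)^{\frac s2}$ on $\cs$ and its distributional action on $\cs_s'$.
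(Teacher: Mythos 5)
Your proposal is correct and follows essentially the same route as the paper: the inclusion $\cs\subset H^{s,1}$ comes from Lemma \ref{lem-H1}, the remaining inclusions from the definition of the $H^1$-norm, and density is obtained by transporting the density of $\cs_\infty$ in $H^1$ back through $I_s$ (setting $\phi_k=I_s h_k$ and using Lemma \ref{lem-x1} and the invariance of $\cs_\infty$), exactly as in the paper's argument. The extra care you flag about identifying $\nabla^s_-u$ with $\vec R(-\Delta)^{\frac s2}u$ in $\cs'$ rather than $\cs'/\mathcal P$ is a point the paper simply treats as immediate, so it does not constitute a divergence in method.
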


\begin{proof}
For any $u\in\cs$, we {use Lemma \ref{lem-H1}} to derive
$$(-\Delta)^\frac s2 u\in H^1\ \ \text{i.e.}\ \ u\in H^{s,1}.
$$
This proves $\cs\subset H^{s,1}$; the other inclusions are obvious.

It remains to show the density of $\cs_\infty$ in $H^{s,1}$. If $f\in H^{s,1}$, then
$$
f\in \cs_s'\ \ \&\ \ (-\Delta)^\frac s2 f\in H^1.
$$
Due to the density of $\cs_\infty$ in $H^1$,
$$\exists\ \{g_j\}_{j\in\nn}\subset \cs_\infty\ \ \text{such that}\ \
\lim_{j\to\infty}\|g_j-(-\Delta)^\frac s2 f\|_{H^1}\to 0.$$
For any $j\in\nn$, let $$f_j=I_sg_j,$$
which actually belongs to $\cs_\infty$ in terms of the Fourier transform.
Noticing that
$$g_j=(-\Delta)^\frac s2 f_j,
$$
 we have
$$
\lim_{j\to\infty}[f_j-f]_{H^{s,1}}=\lim_{j\to\infty}\|(-\Delta)^\frac s2 (f_j- f)\|_{H^1}=\lim_{j\to\infty}\|g_j-(-\Delta)^\frac s2 f\|_{H^1}=0.
$$
Thus, $f\in H^{s,1}$  can be approximated by the $\cs_\infty$-functions $\{f_j\}_{j\in\nn}$.
\end{proof}

\subsubsection*{A dense subspace of $H^{s,1}_+$}\label{s223}
It is difficult to determine the density of  $\cs$ in $H^{s,1}_\pm$. However, we have

\begin{proposition}\label{prop-dense3}
If $s\in (0,1)$, then $I_s(\cs)$ is a dense subspace of $H^{s,1}_+$ but $$I_s(\cs)\not\subset H^{s,1}_-.$$
\end{proposition}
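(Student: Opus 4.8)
\emph{Strategy and the density half.}
I plan to lean on the inversion identity $(-\Delta)^\frac s2 I_s\phi=\phi$ from Lemma~\ref{lem-x1}, which collapses the seminorm $[\cdot]_{H^{s,1}_+}$ restricted to $I_s(\cs)$ into the plain $L^1$-norm of the pre-potential, and which simultaneously identifies $\nabla^s_-I_s\phi$ with the Riesz transform $\vec R\phi$. First I would record the easy inclusion $I_s(\cs)\subset H^{s,1}_+$: for $\phi\in\cs$ the proof of Lemma~\ref{lem-x1} gives $D^\alpha I_s\phi\in L^\infty$ for every $\alpha$, so $I_s\phi\in L^\infty\subset\mathbb L_s\subset\cs_s'$ and $\nabla^s_+I_s\phi=(-\Delta)^\frac s2 I_s\phi=\phi$, whence $[I_s\phi]_{H^{s,1}_+}=\|\phi\|_{L^1}<\infty$. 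For density, let $f\in H^{s,1}_+$, so that $g:=\nabla^s_+f=(-\Delta)^\frac s2 f$ belongs to $L^1$ by the definition of $H^{s,1}_+$; since $C_c^\infty$ (hence $\cs$) is dense in $L^1$, pick $\phi_j\in\cs$ with $\|\phi_j-g\|_{L^1}\to0$. Then $I_s\phi_j\in I_s(\cs)$, both $I_s\phi_j$ and $f$ lie in $\cs_s'$, and by linearity of $\nabla^s_+$ and Lemma~\ref{lem-x1},
$$[I_s\phi_j-f]_{H^{s,1}_+}=\big\|\nabla^s_+(I_s\phi_j-f)\big\|_{L^1}=\big\|(-\Delta)^\frac s2 I_s\phi_j-(-\Delta)^\frac s2 f\big\|_{L^1}=\|\phi_j-g\|_{L^1}\to0,$$
so $I_s(\cs)$ is dense in $H^{s,1}_+$.

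\emph{The negative half.}
Here I would fix a single $\phi\in\cs$ with $\int_\rn\phi\neq0$ (e.g.\ $0\le\phi\in C_c^\infty$, $\phi\not\equiv0$) and show $\nabla^s_-I_s\phi\notin(L^1)^n$. Since $I_s\phi\in\mathbb L_s$ is smooth, Lemma~\ref{lem-integral} makes $\nabla^s_-I_s\phi$ a continuous function represented by \eqref{nabla-s}; splitting that integral over $\{|x-y|<1\}$ and $\{|x-y|\ge1\}$ and using $I_s\phi,\nabla I_s\phi\in L^\infty$ shows it is moreover bounded. On the other hand, following Lemma~\ref{lem-add3} one has $\nabla^s_-=\vec R(-\Delta)^\frac s2$ on $\cs'/\mathcal P$, so by Lemma~\ref{lem-x1},
$$\nabla^s_-I_s\phi=\vec R(-\Delta)^\frac s2 I_s\phi=\vec R\phi\qquad\text{in }\cs'/\mathcal P,$$
where $\vec R\phi$ is a genuine smooth vector field. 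Hence $\nabla^s_-I_s\phi$ and $\vec R\phi$ are continuous functions differing by a polynomial vector; being the difference of two bounded functions this polynomial is a constant $c$, i.e.\ $\nabla^s_-I_s\phi=\vec R\phi+c$. If now $I_s\phi\in H^{s,1}_-$, i.e.\ $\nabla^s_-I_s\phi\in(L^1)^n$, then from the kernel expansion $R_j\phi(x)=\big(\tfrac{\Gamma((n+1)/2)}{\pi^{(n+1)/2}}\big)\tfrac{x_j}{|x|^{n+1}}\int_\rn\phi+O(|x|^{-n-1})$ we get $\vec R\phi(x)\to0$ as $|x|\to\infty$, so the $L^1$-function $\vec R\phi+c$ would tend to $c$, forcing $c=0$; but $\vec R\phi\in(L^1)^n$ is impossible when $\int_\rn\phi\neq0$ (e.g.\ $|R_j\phi(x)|\gtrsim|x_j|/|x|^{n+1}$ on the cone $\{|x_j|\ge|x|/2\}$ for large $|x|$, an integral that diverges; equivalently, $\vec R\phi\in(L^1)^n$ would force $\phi\in H^1$ and hence $\int_\rn\phi=0$). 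This contradiction gives $I_s\phi\notin H^{s,1}_-$.

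\emph{Main obstacle.}
The density half is routine. The work is on the negative side, and the one delicate point is to upgrade the identity $\nabla^s_-I_s\phi=\vec R\phi$ from $\cs'/\mathcal P$ to an honest equality of functions modulo an additive constant — which is exactly where the local regularity of $I_s\phi$ (to invoke Lemma~\ref{lem-integral}) and the decay of $\vec R\phi$ are both needed — after which the non-integrability of $\vec R\phi$ is precisely the assertion that a nonzero integral obstructs membership of $\phi$ in $H^1$.
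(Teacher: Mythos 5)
Your proof is correct. The density half is essentially identical to the paper's argument: both produce, for $f\in H^{s,1}_+$, Schwartz approximants $\phi_j$ of $(-\Delta)^{\frac s2}f$ in $L^1$ and use Lemma~\ref{lem-x1} to convert $\|\phi_j-(-\Delta)^{\frac s2}f\|_{L^1}\to0$ into $[I_s\phi_j-f]_{H^{s,1}_+}\to0$. Where you genuinely diverge is the negative half. The paper disposes of it in two lines by a soft contradiction: if $I_s(\cs)\subset H^{s,1}_-$, then together with $I_s(\cs)\subset H^{s,1}_+$ and $H^{s,1}=H^{s,1}_+\cap H^{s,1}_-$ one would get $\cs=(-\Delta)^{\frac s2}I_s(\cs)\subset H^1$, which is impossible. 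You instead fix one $\phi\in\cs$ with $\int_\rn\phi\neq0$, identify the distribution $\nabla^s_-I_s\phi$ with the continuous bounded function $\vec R\phi$ up to an additive constant (via Lemma~\ref{lem-integral} plus the duality identity in $\cs'/\mathcal P$), kill the constant by decay, and conclude from the non-integrability of $\vec R\phi$. Both arguments ultimately rest on the same classical fact — a Schwartz function with nonzero mean is not in $H^1$, equivalently its Riesz transforms are not integrable — but yours buys a more explicit conclusion (it exhibits concrete elements $I_s\phi$, $\int\phi\neq0$, outside $H^{s,1}_-$, rather than only refuting the inclusion of the whole class) at the cost of the extra work of upgrading the identity $\nabla^s_-I_s\phi=\vec R\phi$ from $\cs'/\mathcal P$ to a pointwise one modulo a constant; the paper's route buys brevity by outsourcing that identification to the space-level identity $H^{s,1}=H^{s,1}_+\cap H^{s,1}_-$. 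One small point to tighten: Lemma~\ref{lem-add3} as stated gives $\nabla^s_-=I_{1-s}\nabla$ on $\cs_s'$ and the full chain $\nabla^s_-=\vec R(-\Delta)^{\frac s2}$ only on $\cs_\infty$ (and a.e.\ on $\cs$), so for $f=I_s\phi$ you should note that $\nabla^s_jI_s\phi=R_j\phi$ in $\cs'/\mathcal P$ follows from the same duality computation, testing against $\psi\in\cs_\infty$ and using $I_s\nabla^s_j\psi=R_j\psi$ with a Fubini justification as in Proposition~\ref{prop1}(ii); this is routine, not a gap.
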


\begin{proof} On the one hand, if $f\in I_s(\cs)$ then
	$$\exists\ \phi\in\mathcal{S}\ \ \text{such that}\ \ f=I_s\phi,
	$$
but Lemma \ref{lem-x1} implies
$$(-\Delta)^\frac s2 f = (-\Delta)^\frac s2I_s\phi=\phi\in\cs\subset L^1,$$
that is,
$
f\in H^{s,1}_+.
$
To show the density of $I_s(\cs)$ in $H^{s,1}_+$, given any $f\in H^{s,1}_+$ we utilize
$$(-\Delta)^\frac s2 f\in L^1$$ and the density of  $\cs$  in $L^1$ to find a sequence
 $$\{g_j\}_{j\in\nn}\subset\cs$$ such that
$$\lim_{j\to\infty}\|g_j-(-\Delta)^\frac s2 f\|_{L^1}=0.$$
Upon defining $$f_j=I_s g_j\in I_s(\cs)$$ and using Lemma \ref{lem-x1}, we gain the representation
$$g_j=(-\Delta)^\frac s2 f_j$$
and the desired convergence
$$
\lim_{j\to\infty}[f_j-f]_{H^{s,1}_+}
=\lim_{j\to\infty}\|(-\Delta)^\frac s2 (f_j-f)\|_{L^1}
=\lim_{j\to\infty}\|g_j-(-\Delta)^\frac s2 f\|_{L^1}=0.
$$
In other words, $I_s(\cs)$ is a dense subspace of $H^{s,1}_+$.

On the other hand, $I_s(\cs)$ is not a subspace of  $H^{s,1}_-$ -
otherwise - if $I_s(\cs)\subset H^{s,1}_-$, then this, along with $I_s(\cs)\subset H^{s,1}_+$, would imply
$I_s(\cs)\subset H^{s,1}$ and hence $\cs\subset H^1$ which is impossible.
\end{proof}

\subsubsection*{The third dense subspace of $H^{s,1}$}\label{s224} In addition to Proposition \ref{prop-dense1}, we obtain
\begin{proposition}\label{prop-dense2}
If $s\in (0,1)$, then  $$\cd_0=\Bigg\{f\in C_c^\infty:\, \int_\rn f(x)\, dx=0\Bigg\}$$ is a dense subspace of $H^{s,1}$.
\end{proposition}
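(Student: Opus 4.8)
The plan is to show that $\cd_0$ is dense in $H^{s,1}$ by comparing it with the already-established dense subspace $\cs_\infty$ from Proposition \ref{prop-dense1}. Since density is transitive, it suffices to approximate every $u\in\cs_\infty$ by functions in $\cd_0$ in the norm $[\cdot]_{H^{s,1}}=\|(-\Delta)^{s/2}(\cdot)\|_{H^1}$. Via Lemma \ref{lem-x1}, writing $u=I_s g$ with $g=(-\Delta)^{s/2}u\in\cs_\infty\subset H^1$, the task is reduced to the following: given $g\in\cs_\infty$, produce a sequence $\{f_j\}\subset\cd_0$ with $(-\Delta)^{s/2}f_j\to g$ in $H^1$. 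Equivalently, applying $I_s$, it suffices to find $\{f_j\}\subset\cd_0$ with $f_j\to I_sg$ appropriately; but the cleaner route is to work directly with the mean-zero $C^\infty_c$ functions and control $(-\Delta)^{s/2}f_j$ in $H^1$.

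First I would set up the approximation at the level of the $H^1$-datum. Fix $u\in\cs_\infty$ and put $g=(-\Delta)^{s/2}u$. Truncate: let $\eta\in C_c^\infty$ be a standard bump with $\eta\equiv 1$ on the unit ball, set $\eta_R(x)=\eta(x/R)$, and consider $v_R=\eta_R u$. Then $v_R\in C_c^\infty$ but in general $\int v_R\neq 0$; however, since $u\in\cs_\infty$ has $\int u=0$ and rapid decay, $\left|\int v_R\right|=\left|\int (\eta_R-1)u\right|\to 0$ as $R\to\infty$, in fact with a quantitative rate from the decay of $u$. Correct the mean by subtracting a small multiple of a fixed mean-one bump $\psi_0\in C_c^\infty$ supported in a fixed ball: set $f_R=v_R-\big(\int v_R\big)\psi_0\in\cd_0$. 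The correction term has $H^{s,1}$-norm $\big|\int v_R\big|\,[\psi_0]_{H^{s,1}}\to 0$, so it is harmless and the real content is to show $[\,\eta_R u-u\,]_{H^{s,1}}\to 0$, i.e. $\|(-\Delta)^{s/2}(\eta_R u-u)\|_{H^1}\to 0$.

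The main obstacle is precisely this last convergence: $(-\Delta)^{s/2}$ is a nonlocal operator, so $(-\Delta)^{s/2}(\eta_R u)$ is not supported in $B_R$ and does not obviously converge to $(-\Delta)^{s/2}u$ in $H^1$. I would handle it as follows. Write $w_R=(\eta_R-1)u$, which is smooth, mean-zero (after the correction is absorbed, or argue directly), supported in $\{|x|\ge R\}$, and satisfies $\sup_x(1+|x|)^{n+s}|w_R(x)|\le C$ uniformly in $R$ while $w_R\to 0$ pointwise. By Lemma \ref{lem-H1}, each $(-\Delta)^{s/2}(\text{smooth, rapidly decaying, mean-zero function})$ lies in $H^1$; more usefully, I would re-examine the proof of Lemma \ref{lem-H1} to extract a quantitative bound: for a mean-zero $h$ with $\sup_x(1+|x|)^{n+s}|h(x)|\le A$ and $\sup_x(1+|x|)^{n+s}|\nabla h(x)|\le A$, one gets $\|h\|_{H^1}\lesssim A$ via $h^+(x)\lesssim A(1+|x|)^{-(n+\ez)}$. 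The subtlety is that the first and second derivative bounds for $w_R$ are \emph{not} uniform in $R$ near $|x|=R$ (differentiating $\eta_R$ costs powers of $R^{-1}$, which actually helps, but differentiating hits $\eta_R$ at scale $R$). I expect the clean estimate is that $\|w_R\|_{W^{s,1}}\to 0$: indeed $w_R$ is supported where $|x|\ge R$ and is dominated there by an integrable-in-the-Gagliardo-seminorm tail of $u$, so by dominated convergence in the double integral defining $[\cdot]_{W^{s,1}}$ one gets $[w_R]_{W^{s,1}}\to 0$; then \eqref{e2a}, namely $[w_R]_{H^{s,1}}\lesssim [w_R]_{W^{s,1}}$ for Schwartz (here $C_c^\infty$) functions, finishes the argument. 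Thus the real work is the dominated-convergence estimate $[(\eta_R-1)u]_{W^{s,1}}\to 0$, for which I would split the double integral into the regions $|x|,|y|\ge R/2$ (small because the tail of $u$ is small and $\eta_R-1$ is bounded) and the region where one variable is large and the other is $\le R/2$ (small because $\eta_R-1$ vanishes for $|x|\le R$, forcing $|x-y|\gtrsim R$ and giving a gain of $R^{-s}$ times $\|\nabla\eta\|_\infty\|u\|_{L^1}$-type quantities). Assembling: $[f_R-u]_{H^{s,1}}\le [w_R]_{W^{s,1}}\cdot C + \big|\int v_R\big|\,[\psi_0]_{H^{s,1}}\to 0$, which together with Proposition \ref{prop-dense1} proves $\cd_0$ is dense in $H^{s,1}$. \qed
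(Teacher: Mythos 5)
Your argument is correct, but it is a genuinely different route from the paper's. The paper proves Proposition \ref{prop-dense2} by writing $f=I_sg$ with $g\in H^1$ (Proposition \ref{prop1}(iii)), identifying $H^{s,1}$ with the Triebel--Lizorkin space $\dot F^s_{1,2}$ via the lifting property, invoking the Han--Paluszy\'nski--Weiss atomic decomposition of $\dot F^s_{1,2}$, and then mollifying finitely many atoms so that the approximants land in $\cd_0$, with the errors controlled in the atomic norm. You instead exploit the already-proved density of $\cs_\infty$ in $H^{s,1}$ (Proposition \ref{prop-dense1}) and reduce the problem to approximating a fixed $u\in\cs_\infty$ by truncation $\eta_Ru$ plus a mean-zero correction $-\big(\int\eta_Ru\big)\psi_0$, estimating the error through $[\,\cdot\,]_{H^{s,1}}\ls[\,\cdot\,]_{W^{s,1}}$, i.e. \eqref{e2a} (legitimate here, since $(\eta_R-1)u\in\cs$ and, by Lemma \ref{lem-add3}(iii), $[\,\cdot\,]_{H^{s,1}}=\|\nabla^s_+\cdot\|_{L^1}+\|\nabla^s_-\cdot\|_{L^1}$ on $\cs$). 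Two points deserve care but do not threaten the argument: (a) the mean-zero hypothesis $\int u=0$, available because $u\in\cs_\infty$, is essential for the correction term $\big|\int\eta_Ru\big|\,[\psi_0]_{H^{s,1}}$ to vanish, and $[\psi_0]_{H^{s,1}}<\infty$ follows from $\cs\subset H^{s,1}$; (b) in the tail estimate $[(\eta_R-1)u]_{W^{s,1}}\to0$ you cannot use mere boundedness of $\eta_R-1$ near the diagonal; the cleanest fix is the elementary interpolation bound $[w]_{W^{s,1}}\ls\|w\|_{L^1}^{1-s}\|\nabla w\|_{L^1}^{s}$ (split $|x-y|\le\delta$ versus $|x-y|>\delta$ and optimize), which finishes since both $\|(\eta_R-1)u\|_{L^1}$ and $\|\nabla((\eta_R-1)u)\|_{L^1}$ tend to $0$. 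Your route is more elementary, avoiding the $\dot F^s_{1,2}$ atomic machinery entirely, at the price of resting on \eqref{e2a}/\eqref{e2} and Proposition \ref{prop-dense1}; the paper's route is heavier but yields, as a by-product, an explicit atomic-type approximation of $H^{s,1}$ elements by finite sums of mollified mean-zero atoms.
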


\begin{proof} Proposition \ref{prop-dense1} implies
$$\cd_0\subset\cs\subset H^{s,1}.
$$
So, it suffices to show the density of $\cd_0$ in $H^{s,1}$.
Let $f\in H^{s,1}$.
Based on Proposition \ref{prop1}(iii),
$$\exists\ g\in H^1\ \ \text{such that}\ \  f=I_sg\ \ \text{in}\ \  \cs'/\mathcal P.
$$
Note that $H^1$ is nothing but the homogeneous Triebel-Lizorkin space $\dot F^0_{1,2}$. So, the lifting property of $I_s$ on the Triebel-Lizorkin spaces (cf. \cite[p.\,242]{Tr83}) shows
$$I_s(H^1)=I_s(\dot F^0_{1,2})=\dot F^s_{1,2}.$$
Therefore,
$$\exists\ \wz f\in \dot F^s_{1,2}\ \ \text{such that}\ \
f=I_sg=\wz f\ \ \text{in}\ \ \cs'/\mathcal P.
$$

Recall that \cite[Theorem~1]{HPW} yields that any element in $\dot F^s_{1,2}$ can be written as the
linear combinations of $\dot F^s_{1,2}$-atoms, just as the  atomic decomposition of the Hardy space $H^1$.
To be precise, since $\wz f\in \dot F^s_{1,2}$,  it follows that
$$\wz f=\sum_{j\in\nn} \lz_j a_j \ \ \textup{in}\ \ \cs'/\mathcal P,$$
where
$$\|\wz f\,\|_{\dot F^s_{1,2}}\approx \sum_{j\in\nn}|\lz_j|
$$
and, based on the remark after \cite[Definition~(1.6)]{HPW}, every $a_j$ is a locally integrable function on $\rn$ with the following three properties:
\begin{enumerate}
\item[\rm (i)] $a_j$ is supported on a ball $B_j$;

\item[\rm (ii)] $\|a_j\|_{\dot F^s_{2,2}}\le |B_j|^{-\frac 12}$;

\item[\rm (iii)] $\int_\rn a_j\,dx=0$.
\end{enumerate}
Again using the lifting property of $(-\Delta)^\frac s2$ (cf. \cite[p.\,242]{Tr83}) gives
$$\|a_j\|_{\dot F^s_{2,2}}=\|(-\Delta)^\frac s2a_j\|_{\dot F^0_{2,2}}.$$
By \cite[p.\,242, (2)]{Tr83}, any element in $\dot F^0_{2,2}$ coincides with a function in $L^2$ in the sense of $\cs'/\mathcal P$.
Thus, we know from $(-\Delta)^\frac s2a_j\in \dot F^0_{2,2}$ that
$(-\Delta)^\frac s2a_j$ coincides with some $L^2$-function, denoted by $\wz{(-\Delta)^\frac s2 a_j}$, in the sense of $\cs'/\mathcal P$.
 So,
by the density of $\cs_\infty$ in $L^2$ (cf. the proof of Lemma \ref{lem-add3}(iii)) and the duality we get
\begin{align}\label{eq-z1}
\|(-\Delta)^\frac s2a_j\|_{L^2}
&=\sup\lf\{|\laz (-\Delta)^\frac s2a_j, \phi\raz|:\, \phi\in\cs_\infty, \|\phi\|_{L^2}\le 1\r\}\notag\\
&=\sup\lf\{|\laz \wz{(-\Delta)^\frac s2a_j}, \phi\raz|:\, \phi\in\cs_\infty, \|\phi\|_{L^2}\le 1\r\}\notag\\
&=\|\wz{(-\Delta)^\frac s2a_j}\|_{L^2}\\
&\approx  \|(-\Delta)^\frac s2a_j\|_{\dot F^0_{2,2}}\notag\\
& \approx  \|a_j\|_{\dot F^s_{2,2}}.\notag
\end{align}

Let $\psi\in C_c^\infty$ satisfy
$$\int_\rn\psi\, dx=1\ \ \&\ \ \supp\psi\subset B(0,1).
$$
For any $\ez\in(0,\infty)$, define
$$\psi_\ez(\cdot)=\ez^{-n}\psi(\ez^{-1}\cdot).$$
Fix an arbitrary small number $\eta\in(0,\infty)$. For any $j\in\nn$, an application of $(-\Delta)^\frac s2a_j\in L^2$ produces a sufficiently small $\ez_j\in(0,\infty)$ such that
$$\supp\psi_{\ez_j}\ast a_j\subset 2B_j\ \ \&\ \
\|(-\Delta)^\frac s2 a_j-\psi_{\ez_j}\ast((-\Delta)^\frac s2 a_j)\|_{L^2}<\eta|2B_j|^{-\frac 12}.
$$
By \eqref{eq-z1}, the last inequality is equivalent to that
$$
\|a_j-\psi_{\ez_j}\ast a_j\|_{\dot F^s_{2,2}}<\eta|2B_j|^{-\frac 12}.
$$
Choose $N$ large enough such that
$$\sum_{j=N+1}^\infty |\lz_j|<\eta,
$$
and define
$$f_{\ez,N}=\sum_{j=1}^N \lz_j \psi_{\ez_j}\ast a_j.$$
Evidently,
$$\psi_{\ez_j}\ast a_j\in\cd_0\ \ \&\ \
f_{\ez, N}\in\cd_0.
$$
By the argument in \cite[p.\,239]{HPW}, we know that any $\dot F^s_{1,2}$-atom $a_j$ satisfies
$\|a_j\|_{\dot F^s_{1,2}}\ls 1.$
The choice of $\ez_j$ implies that $$\eta^{-1} (a_j-\psi_{\ez_j}\ast a_j)$$ is also an $\dot F^s_{1,2}$-atom, thereby yielding
$$
\|a_j-\psi_{\ez_j}\ast a_j\|_{\dot F^s_{1,2}}\ls \eta.
$$
Upon recalling $$f=\wz f\ \ \text{in}\ \ \cs'/\mathcal P,$$ we obtain
\begin{align*}
\|f_{\ez,N}- f\|_{\dot F^s_{1,2}}
&=\|f_{\ez,N}-\wz f\|_{\dot F^s_{1,2}}\\
&\ls \sum_{j=1}^N |\lz_j|\|\psi_{\ez_j}\ast a_j-a_j\|_{\dot F^s_{1,2}}
+\sum_{j=N+1}^\infty |\lz_j|\|a_j\|_{\dot F^s_{1,2}}\\
&\ls \eta \sum_{j=1}^N |\lz_j|+\sum_{j=N+1}^\infty |\lz_j|\\
&\ls \eta.
\end{align*}

Finally, using the lifting property of $I_s$ {(cf. \cite[p.\,242]{Tr83})} yields
\begin{align*}
[f_{\ez,N}-f]_{H^{s,1}}
=\|(-\Delta)^\frac s2 (f_{\ez,N}-f)\|_{H^1}
\approx \|f_{\ez,N}-\wz f\|_{\dot F^s_{1,2}}\ls \eta.
\end{align*}
Due to the arbitrariness of $\eta$, we obtain that $f\in H^{s,1}$ can be approximated by functions in $\cd_0$.
\end{proof}

\section{Tracing laws for $H^{s,1}\, \&\, H^{s,1}_\pm$}\label{s3}

\subsection{Strong/weak estimates for $\text{Cap}_{X\in\{H^{s,1},H^{s,1}_\pm\} }$}\label{s3.1}

This section is devoted to a measure-theoretic study of the capacity living on $X\in\{W^{s,1},H^{s,1},H^{s,1}_\pm\}$.

\subsubsection*{Capacitary concepts}\label{s311} For $\az\in(0,n)$, denote by $\Lambda^{\alpha}_{(\infty)}$ the $\alpha$-dimensional Hausdorff capacity:
$$
\Lambda^{\alpha}_{(\infty)}(E)=\inf\lf\{\sum_i r_i^\alpha:\, E\subset\bigcup_i B(x_i, r_i),\; (x_i, r_i)\in\rn\times(0,\infty)\r\}
$$
for any set $E\subset\rn$ which is covered by a sequence of balls $$B(x_i,r_i)=\big\{x\in\rn: |x-x_i|<r_i\big\}.$$
Classically, $\Lambda^\alpha_{(\infty)}(\cdot)$ is a monotone, countably subadditive set function on the class of all subsets of $\rn$, and enjoys $\Lambda^\alpha_{(\infty)}(\emptyset)=0$.

\begin{definition}
	\label{d31}
For $s\in (0,1)$ and any compact set $K\subset\rn$ define (cf. \cite{Xadv, Adams})
\begin{equation}\label{e31}
\text{Cap}_{X}(K)=\begin{cases} \inf\big\{[u]_{X}:\ u\in C_c^\infty\ \&\ u\ge 1_K\big\} &\text{as}\ \ X=W^{s,1}\\
\inf\big\{[u]_{X}:\ u\in\cs\ \&\ u\ge 1\ \text{on}\ K\big\}\ &\text{as}\ \ X\in\big\{H^{s,1},H^{s,1}_\pm\big\}.
\end{cases}
\end{equation}
Furthermore, $\text{Cap}_{X}(\cdot)$ is extendable from compact sets to general sets as seen below.
\begin{itemize}
	\item[\rm (i)] If $O\subset\rn$ is open, then
\begin{align*}
\text{Cap}_{X}(O)= \sup_{K\;\textup{compact},\, K\subset O} \text{Cap}_{X}(K)
\end{align*}
\item[\rm (ii)] For an arbitrary set $E\subset\rn$ set
\begin{align*}
\text{Cap}_{X}(E)= \inf_{O\;\textup{open},\, O\supset E} \text{Cap}_{X}(O).
\end{align*}
\end{itemize}
Thus, the definition of $\text{Cap}_{X}$ on any compact/open set is consistent (cf. \cite[Lemma~3.2.4]{LXYY}).
\end{definition}

\begin{lemma}\label{p31}
Let $$
\begin{cases}s\in (0,1)\\
(x,r)\in\mathbb R^n\times(0,\infty)\\
B(x,r)=\{y\in\rn: |y-x|<r\}\\
X\in\big\{H^{s,1},H^{s,1}_+,H^{s,1}_-\big\}.
\end{cases}
$$
Then

\begin{enumerate}	
	
\item[(i)] $\text{Cap}_{X}(\emptyset)=0\ \ \&\ \ \text{Cap}_{X}(B(x,r))=r^{n-s}\text{Cap}_{X}(B(0,1))$.

\item[(ii)] $\text{Cap}_X(E_1)\le\text{Cap}_X(E_2)$ whenever $E_1\subset E_2\subset\rn$.

\item[(iii)]
$$
\max\Big\{\text{Cap}_{H^{s,1}_+}(\cdot),\text{Cap}_{H^{s,1}_-}(\cdot)\Big\}\le
\text{Cap}_{H^{s,1}}(\cdot)\approx \Lambda_{(\infty)}^{n-s}(\cdot)\approx\text{Cap}_{W^{s,1}}(\cdot).
$$

\item[(iv)] $\text{Cap}_{W^{s,1}}(\cdot)$ is countably subadditive, but $\text{Cap}_{H^{s,1}}(\cdot)$ and $\text{Cap}_{H^{s,1}_\pm}(\cdot)$ {may not be} countably subadditive.
\end{enumerate}
\end{lemma}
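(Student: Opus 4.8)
I would settle all four items first on compact sets, letting the two extension steps of Definition \ref{d31} (and the consistency recalled there) handle open and then arbitrary sets.

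\textbf{Parts (i) and (ii).} For $\text{Cap}_X(\emptyset)$ note $u\equiv 0\in\cs$ is admissible for $\emptyset$, so $\text{Cap}_X(\emptyset)\le [0]_X=0$. For the dilation identity I would use the homogeneity of the Fourier multipliers behind $\nabla^s_\pm$: with $u_r:=u(\cdot/r)$, Definitions \ref{defn1}--\ref{defn2} give $(-\Delta)^{s/2}u_r=r^{-s}\big((-\Delta)^{s/2}u\big)_r$ and $\nabla^s_j u_r=r^{-s}(\nabla^s_j u)_r$, since the symbols $(2\pi|\xi|)^s$ and $(-2\pi i\xi_j)(2\pi|\xi|)^{s-1}$ are homogeneous of degree $s$; as the $H^1$- and $L^1$-norms both obey $\|w_r\|=r^n\|w\|$ (the Riesz transforms commuting with dilations), one gets $[u_r]_X=r^{n-s}[u]_X$ for every $X\in\{H^{s,1},H^{s,1}_\pm\}$. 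Together with translation invariance, the bijection $u\leftrightarrow u_r$ between the admissible classes for $B(0,1)$ and for $B(x,r)$ yields $\text{Cap}_X(B(x,r))=r^{n-s}\text{Cap}_X(B(0,1))$, a quantity finite (test with a Schwartz bump) and positive (by the Stein--Weiss type estimates \eqref{e4x}--\eqref{e4xx}, any admissible $u$ satisfies $[u]_X\gtrsim\|1_{B(0,1)}\|_{L^{n/(n-s),\infty}}\gtrsim1$). Monotonicity (ii) is immediate on compact sets — an admissible $u$ for $E_2$ is admissible for any $E_1\subset E_2$ — and propagates through the ``$\sup$ over compact subsets'' and ``$\inf$ over open supersets'' steps.

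\textbf{Part (iii).} The left inequality is soft: $[u]_{H^{s,1}_\pm}=\|\nabla^s_\pm u\|_{L^1}\le\|\nabla^s_+u\|_{L^1}+\|\nabla^s_-u\|_{L^1}=[u]_{H^{s,1}}$, so any $u$ admissible for a compact $K$ costs no more in $H^{s,1}_\pm$ than in $H^{s,1}$, whence $\text{Cap}_{H^{s,1}_\pm}\le\text{Cap}_{H^{s,1}}$. The identification $\text{Cap}_{W^{s,1}}\approx\Lambda^{n-s}_{(\infty)}$ is the classical comparison of the fractional Sobolev capacity with $(n-s)$-dimensional Hausdorff content \cite{Adams,Xadv}, so it remains to prove $\text{Cap}_{H^{s,1}}\approx\Lambda^{n-s}_{(\infty)}$. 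For ``$\lesssim$'': since $C_c^\infty\subset\cs$ and $[u]_{H^{s,1}}\lesssim[u]_{W^{s,1}}$ by \eqref{e2a}, every competitor for $\text{Cap}_{W^{s,1}}(K)$ is a competitor for $\text{Cap}_{H^{s,1}}(K)$ with controlled cost, so $\text{Cap}_{H^{s,1}}(K)\lesssim\text{Cap}_{W^{s,1}}(K)\approx\Lambda^{n-s}_{(\infty)}(K)$. For ``$\gtrsim$'' I would establish the strong capacitary (Choquet) inequality
\[
\int_0^\infty\Lambda^{n-s}_{(\infty)}\big(\{x\in\rn:\,|u(x)|>t\}\big)\,dt\ \lesssim\ [u]_{H^{s,1}}\qquad\forall\ u\in\cs
\]
(this is also the substance of the capacitary theorems in \S\ref{s3.1} that follow): writing $u=I_s g$ with $g=(-\Delta)^{s/2}u\in H^1$ (Lemma \ref{lem-x1}), decomposing $g=\sum_j\lz_j a_j$ into $H^1$-atoms with $\sum_j|\lz_j|\approx\|g\|_{H^1}=[u]_{H^{s,1}}$, and invoking the countable subadditivity of the Choquet integral relative to the countably subadditive set function $\Lambda^{n-s}_{(\infty)}$, one reduces to the uniform estimate $\int_0^\infty\Lambda^{n-s}_{(\infty)}(\{|I_s a|>t\})\,dt\lesssim1$ for a single atom $a$ on a ball $B(x_0,\rho)$, which follows from the molecular bounds $|I_s a(x)|\lesssim\rho^{s-n}$ on $B(x_0,2\rho)$ and $|I_s a(x)|\lesssim\rho\,|x-x_0|^{s-n-1}$ off it (using $\int a=0$), together with $\Lambda^{n-s}_{(\infty)}(B(x_0,R))\le R^{n-s}$. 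Applying the displayed inequality to $u\in\cs$ with $u\ge1$ on $K$ gives $\Lambda^{n-s}_{(\infty)}(K)\le\int_0^\infty\Lambda^{n-s}_{(\infty)}(\{|u|>t\})\,dt\lesssim[u]_{H^{s,1}}$, and an infimum over such $u$ completes (iii).

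\textbf{Part (iv), and the main obstacle.} Countable subadditivity of $\text{Cap}_{W^{s,1}}$ I would obtain from the classical scheme built on the submodularity of the Gagliardo seminorm: the elementary pointwise inequality
\[
|(u\vee v)(x)-(u\vee v)(y)|+|(u\wedge v)(x)-(u\wedge v)(y)|\le|u(x)-u(y)|+|v(x)-v(y)|
\]
yields $[u\vee v]_{W^{s,1}}+[u\wedge v]_{W^{s,1}}\le[u]_{W^{s,1}}+[v]_{W^{s,1}}$, so on compact sets $\text{Cap}_{W^{s,1}}$ is subadditive once the maxima are mollified back into $C_c^\infty$, and an exhaustion over open sets upgrades this to countable subadditivity. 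For $\text{Cap}_{H^{s,1}}$ and $\text{Cap}_{H^{s,1}_\pm}$ this scheme collapses: there is no analogue of the above submodularity, because $(-\Delta)^{s/2}$ and $\nabla^s_\pm$ are nonlocal and, for $u\in\cs$, $(-\Delta)^{s/2}u$ is forced to have vanishing integral, so passing to $u\vee v$ destroys the control on $\|(-\Delta)^{s/2}(\cdot)\|_{H^1}$ and $\|\nabla^s_\pm(\cdot)\|_{L^1}$. The plan for the negative assertion is to make this loss quantitative by exhibiting a countable family $\{E_i\}$ for which the cancellation encoded in $H^1$ forces a global test function strictly costlier than the sum of localized ones; constructing and estimating such a configuration — the one step not reducible to facts already on record or to routine computation — is where I expect the real difficulty of the lemma to lie.
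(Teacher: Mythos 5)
Your handling of (i), (ii), the inequality $\text{Cap}_{H^{s,1}_\pm}\le\text{Cap}_{H^{s,1}}\lesssim\text{Cap}_{W^{s,1}}$ (via \eqref{e2a}) and the identification $\text{Cap}_{W^{s,1}}\approx\Lambda^{n-s}_{(\infty)}$ agrees with the paper, which treats (i)--(ii) as immediate from \eqref{e31} and cites \cite{PS} and \cite{Xadv} for the last equivalence. For the remaining bound $\Lambda^{n-s}_{(\infty)}(K)\lesssim\text{Cap}_{H^{s,1}}(K)$ you take a genuinely different route: you re-derive the strong capacitary inequality $\int_0^\infty\Lambda^{n-s}_{(\infty)}(\{|u|>t\})\,dt\lesssim[u]_{H^{s,1}}$ from $u=I_s(-\Delta)^{\frac s2}u$, an atomic decomposition of $(-\Delta)^{\frac s2}u\in H^1$, and molecular bounds on $I_s$ of an atom, whereas the paper invokes Adams' comparison $\Lambda^{n-s}_{(\infty)}(K)\approx R_s(K)$ for the $H^1$-based capacity $R_s$ (\cite[Proposition~3]{Adams}) and then notes, via Lemmas \ref{lem-x1} and \ref{lem-H1}, that $f_\ast=(-\Delta)^{\frac s2}u$ is admissible for $R_s(K)$ with $\|f_\ast\|_{H^1}=[u]_{H^{s,1}}$; the strong inequality you prove is exactly what the paper defers to Theorem \ref{thm3.1}, where it is quoted from \cite[Proposition~5]{Adams}. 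Your version buys self-containedness, but one step is not justified as written: you push the Choquet integral through the series $\sum_j|\lambda_j|\,|I_sa_j|$ by appealing to ``countable subadditivity of the Choquet integral relative to the countably subadditive set function $\Lambda^{n-s}_{(\infty)}$''. Countable subadditivity of a set function does not make its Choquet integral subadditive; one needs strong subadditivity, which the ball-based content lacks but its dyadic variant has, so sublinearity of the Choquet integral holds only up to constants via the equivalence with the dyadic content (the standard device in \cite{Adams}). You must state and use that fact, otherwise the reduction to a single atom does not go through.

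For (iv), replacing the paper's citation of \cite[Theorem~1(iii)]{Xcr} by the lattice-truncation (submodularity) scheme for the Gagliardo seminorm is a legitimate alternative, granted the usual mollification and open/compact bookkeeping needed to pass from finite subadditivity on compact sets to countable subadditivity for the outer extension. On the negative half, however, you have read more into the statement than the paper proves: ``may not be countably subadditive'' is supported there only by the remark that the admissible functions in $\text{Cap}_{H^{s,1}}$ and $\text{Cap}_{H^{s,1}_\pm}$ are not required to be nonnegative, so the usual subadditivity mechanism is unavailable (as noted in \cite{Adams}); no counterexample family $\{E_i\}$ is constructed. Thus the ``real difficulty'' you locate is not part of the paper's argument, and your qualitative explanation of why the truncation scheme collapses already matches everything the paper offers at that point.
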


\begin{proof}  Both (i) and (ii) follow from \eqref{e31}.

(iii) First, according to Definition \ref{d31}, we only need to consider these capacities on compact sets.
For any $u\in\cs$, by \eqref{e2a},  we get
$$[u]_{H^{s,1}_\pm}\le [u]_{H^{s,1}}\ls [u]_{W^{s,1}} \Rightarrow\text{Cap}_{H^{s,1}_\pm}(\cdot)\le\text{Cap}_{H^{s,1}}(\cdot)\ls \text{Cap}_{W^{s,1}}(\cdot).
$$
Noting that
$$\text{Cap}_{W^{s,1}}(\cdot)\approx \Lambda^{n-s}_{(\infty)}(\cdot)$$ is given in \cite[Theorem 2.1]{PS} and \cite[(2.1)]{Xadv},
we are left to verify
\begin{align}\label{eq-x5}
\Lambda^{n-s}_{(\infty)}(\cdot)\ls \text{Cap}_{H^{s,1}}(\cdot).
\end{align}

 According to \cite[Proposition~3]{Adams},  for any compact set $K$ in $\rn$, the capacity
$$
R_s(K)=\inf\Big\{\|f\|_{H^1}:\,\, f\in\cs_\infty\ \&\ I_{s}f\ge 1\; \textup{on}\; K\Big\}
$$
satisfies
\begin{align*}
\Lambda^{n-s}_{(\infty)}(K) \approx R_s(K).
\end{align*}
By Lemma \ref{lem-H1}, we have $$\cs_\infty\subset \mathcal S_{s,0}\subset H^1$$
and  the density of  $\cs_{s,0}$  in $H^1$.
Meanwhile, for any $(f,x)\in \cs_{s,0}\times\rn$, it is obvious that
$I_sf(x)$ is well defined and $I_sf$ is continuous on $\rn$. Thus, instead of using $\cs_\infty$,  we have
{$$
R_s(K)=
\inf\Big\{\|f\|_{H^1}:\,\, f\in\cs_{s,0}\ \&\ I_{s}f\ge 1\; \textup{on}\; K\Big\}.
$$}
For any $ u\in\cs $ satisfying $u\ge 1$ on $K$ let $$f_\ast=(-\Delta)^\frac s2 u,$$ which belongs to $\cs_{s,0}$ in terms of Lemma \ref{lem-H1}.
Then, by  Lemma \ref{lem-x1} we have
$$I_{s}f_\ast=u\ge 1\ \ \text{on}\ \ K,
$$
thereby achieving
\begin{align*}
R_s(K)
&\le \|f_\ast\|_{H^1}
=\|(-\Delta)^\frac s2 u\|_{H^1}=[u]_{H^{s,1}}.
\end{align*}
Taking the infimum over all such $u\in\cs$ satisfying $u\ge 1$ on $K$ yields
\begin{align*}
R_s(K)
\le \inf\Big\{[u]_{H^{s,1}}:\,\, \cs\ni u\ge 1\; \textup{on}\; K\Big\}=\text{Cap}_{H^{s,1}}(K).
\end{align*}
Thus,
\begin{align*}
\Lambda^{n-s}_{(\infty)}(K)\ls \text{Cap}_{H^{s,1}}(K).
\end{align*}
This proves \eqref{eq-x5}.

(iv)  The countable subadditivity of $\text{Cap}_{W^{s,1}}(\cdot)$ follows from \cite[Theorem 1(iii)]{Xcr}. Since the test functions used in
$$
\text{Cap}_X(\cdot)\ \ \text{for}\ \ X\in\big\{H^{s,1},H^{s,1}_\pm\big\}
$$
are not assumed to be nonnegative, the capacities under consideration may not be countably subadditive as mentioned in \cite{Adams}.
\end{proof}

\subsubsection*{Strong estimates for $\text{Cap}_{X\in\{H^{s,1},H^{s,1}_-(n>1)\}}$}\label{s3.2}

First of all, an application of Proposition \ref{p31}(iii) and \cite[Theorem 1.1]{Xadv} or \cite[Theorem~1.3]{PS} gives the following strong inequality for $\text{Cap}_{W^{s,1}}$ (cf. \cite[Theorem 2.2]{Xadv}):
\begin{equation}
\label{e34}
\int_0^\infty \text{Cap}_{W^{s,1}}\big(\{x\in\rn:\, |u(x)|>t\}\big)\,dt\ls [u]_{W^{s,1}} \qquad \ \forall\ \ u\in C_c^\infty.
\end{equation}
Next, we are led by \eqref{e34} to get the strong inequality for $\text{Cap}_{H^{s,1}}$ as seen below.

\begin{theorem}\label{thm3.1}
If $s\in (0,1)$, then
$$
\int_0^\infty \text{Cap}_{H^{s,1}}\big(\{x\in\rn:\, |u(x)|>t\}\big)\,dt\lesssim [u]_{H^{s,1}} \qquad \ \forall\ \ u\in \cs.
$$
\end{theorem}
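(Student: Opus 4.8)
The plan is to pass to the $(n-s)$-dimensional Hausdorff content and then reduce the inequality to a uniform estimate on $H^1$-atoms. Since $u\in\cs$ is continuous, each superlevel set $O_t=\{x\in\rn:|u(x)|>t\}$ is open, and for any compact $K\subset O_t$ Lemma~\ref{p31}(ii)--(iii) gives $\text{Cap}_{H^{s,1}}(K)\lesssim\Lambda^{n-s}_{(\infty)}(K)\le\Lambda^{n-s}_{(\infty)}(O_t)$; taking the supremum over such $K$ yields $\text{Cap}_{H^{s,1}}(O_t)\lesssim\Lambda^{n-s}_{(\infty)}(O_t)$ for every $t>0$, and $t\mapsto\text{Cap}_{H^{s,1}}(O_t)$, $t\mapsto\Lambda^{n-s}_{(\infty)}(O_t)$ are monotone hence measurable. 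So it suffices to prove
$$
\int_0^\infty\Lambda^{n-s}_{(\infty)}\big(\{x\in\rn:|u(x)|>t\}\big)\,dt\lesssim[u]_{H^{s,1}}.
$$

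To do this I would write $u=I_sf$ with $f=(-\Delta)^{\frac{s}{2}}u$, which by Lemmas~\ref{lem-H1} and~\ref{lem-x1} belongs to $\cs_{s,0}\subset H^1$ with $\|f\|_{H^1}=[u]_{H^{s,1}}$ and $u=I_sf$ pointwise. Take a classical atomic decomposition $f=\sum_j\lambda_ja_j$ in $H^1$, with each $a_j$ supported on a ball $B_j=B(x_j,r_j)$, $\|a_j\|_{L^\infty}\le|B_j|^{-1}$, $\int_\rn a_j\,dx=0$, and $\sum_j|\lambda_j|\approx\|f\|_{H^1}$. Since $\sum_{j\le N}\lambda_jI_sa_j=I_s\big(\sum_{j\le N}\lambda_ja_j\big)\to I_sf=u$ in $L^{\frac{n}{n-s}}$ by the Stein--Weiss inequality~\eqref{e1}, one gets $|u|\le G:=\sum_j|\lambda_j|\,|I_sa_j|$ almost everywhere; using the equivalence $\text{Cap}_{H^{s,1}}\approx\Lambda^{n-s}_{(\infty)}$, a weak-type bound $\Lambda^{n-s}_{(\infty)}(\{|I_sh|>t\})\lesssim\|h\|_{L^1}/t$, and the continuity of $u$, this improves to hold outside a set of vanishing $\Lambda^{n-s}_{(\infty)}$, so that $\Lambda^{n-s}_{(\infty)}(\{|u|>t\})\le\Lambda^{n-s}_{(\infty)}(\{G>t\})$ for a.e.\ $t>0$. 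Invoking the (quasi-)subadditivity of the Choquet integral built from $\Lambda^{n-s}_{(\infty)}$ --- which follows from the strong subadditivity of the comparable dyadic Hausdorff content --- together with the scaling $\int_0^\infty\Lambda^{n-s}_{(\infty)}(\{c|v|>t\})\,dt=c\int_0^\infty\Lambda^{n-s}_{(\infty)}(\{|v|>\tau\})\,d\tau$, we obtain
$$
\int_0^\infty\Lambda^{n-s}_{(\infty)}(\{|u|>t\})\,dt\;\lesssim\;\sum_j|\lambda_j|\int_0^\infty\Lambda^{n-s}_{(\infty)}(\{|I_sa_j|>t\})\,dt,
$$
reducing the whole estimate to the uniform bound $\int_0^\infty\Lambda^{n-s}_{(\infty)}(\{|I_sa|>t\})\,dt\lesssim1$ over all $H^1$-atoms $a$.

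For this last point, translation and dilation invariance of $I_s$ and $\Lambda^{n-s}_{(\infty)}(B(x,r))\approx r^{n-s}$ (Lemma~\ref{p31}(i)) let me assume $a$ is supported on $B(0,1)$ with $\|a\|_{L^\infty}\le1$ and $\int_\rn a\,dx=0$. Then $|I_sa(x)|\le c_{n,s}\int_{B(0,1)}|x-y|^{s-n}\,dy\lesssim1$ for $|x|\le2$, while for $|x|\ge2$ the cancellation $\int a=0$ and the mean value theorem applied to $r\mapsto r^{s-n}$ give $|I_sa(x)|\lesssim|x|^{s-n-1}$; in particular $\|I_sa\|_{L^\infty}\lesssim1$. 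Splitting the $t$-integral at $t_0\approx2^{s-n-1}$: for $t\ge t_0$ the set $\{|I_sa|>t\}$ lies in $B(0,2)$, so its content is $\lesssim2^{n-s}$ and the contribution over the bounded interval $t\in[t_0,\|I_sa\|_{L^\infty}]$ is $\lesssim1$; for $0<t<t_0$ it lies in $B\big(0,(c/t)^{1/(n+1-s)}\big)$, whose content is $\lesssim t^{-(n-s)/(n+1-s)}$, and $\int_0^{t_0}t^{-(n-s)/(n+1-s)}\,dt<\infty$ since $(n-s)/(n+1-s)<1$. This gives the uniform atomic bound, and combining the displays yields $\int_0^\infty\Lambda^{n-s}_{(\infty)}(\{|u|>t\})\,dt\lesssim\sum_j|\lambda_j|\approx\|f\|_{H^1}=[u]_{H^{s,1}}$, as required.

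The main obstacle is the middle paragraph. The Choquet integral associated with $\Lambda^{n-s}_{(\infty)}$ is not literally subadditive, so one must route the splitting over atoms through the comparable dyadic Hausdorff content or quote the known quasi-subadditivity; moreover, since $\Lambda^{n-s}_{(\infty)}$ distinguishes Lebesgue-null sets, upgrading the a.e.\ identity $u=\sum_j\lambda_jI_sa_j$ to the content-level inequality $\Lambda^{n-s}_{(\infty)}(\{|u|>t\})\le\Lambda^{n-s}_{(\infty)}(\{G>t\})$ requires the capacitary machinery ($\text{Cap}_{H^{s,1}}\approx\Lambda^{n-s}_{(\infty)}$, a weak-type estimate) together with the continuity of $u\in\cs$. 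Everything else --- the reduction to the Hausdorff content, the $H^1$ atomic decomposition, and the pointwise Riesz-kernel estimates --- is routine. Alternatively one could first establish the inequality for $u\in\cd_0$ (dense in $H^{s,1}$ by Proposition~\ref{prop-dense2}) and then pass to the limit, but the Choquet-theoretic subtleties persist.
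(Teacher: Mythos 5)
Your proposal is correct in outline and its skeleton coincides with the paper's: both reduce via Lemma \ref{p31}(iii) to the Hausdorff content $\Lambda^{n-s}_{(\infty)}$, write $u=I_sf$ with $f=(-\Delta)^{\frac s2}u\in\cs_{s,0}\subset H^1$ (Lemmas \ref{lem-H1} and \ref{lem-x1}) and use $\|f\|_{H^1}\approx[u]_{H^{s,1}}$. The difference is in how the key estimate $\int_0^\infty\Lambda^{n-s}_{(\infty)}\big(\{|I_sf|>t\}\big)\,dt\lesssim\|f\|_{H^1}$ is obtained: the paper simply quotes it as \cite[Proposition~5]{Adams}, whereas you re-derive it from scratch through the $H^1$-atomic decomposition, a scale-invariant uniform Choquet bound for $I_s$ of a single atom, and quasi-subadditivity of the Choquet integral built from the (dyadic) Hausdorff content --- which is essentially a reconstruction of Adams' own argument. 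What the paper's route buys is brevity; what yours buys is self-containedness, at the cost of two technical points you only sketch: (a) countable quasi-subadditivity of the Choquet integral for $\Lambda^{n-s}_{(\infty)}$ (routed through the dyadic content together with its continuity from below), and (b) upgrading the Lebesgue-a.e. identity $I_sf=\sum_j\lambda_jI_sa_j$ to an inequality off a set of vanishing content, which needs the weak-type bound $\Lambda^{n-s}_{(\infty)}\big(\{|I_sh|>t\}\big)\lesssim t^{-1}\|h\|_{L^1}$ and a subsequence argument (the continuity of $u$ is not really what does the work there). Both facts are standard and in the literature, so there is no genuine gap, but a complete write-up should either supply them or cite them explicitly --- at which point one may as well cite \cite[Proposition~5]{Adams} directly, as the paper does.
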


\begin{proof}
Note that  Proposition \ref{p31}(iii) implies
$$\text{Cap}_{H^{s,1}}(\cdot)\ls \text{Cap}_{W^{s,1}}(\cdot)\approx \Lambda_{(\infty)}^{n-s}(\cdot)$$
and \cite[Proposition~5]{Adams} gives that
\begin{align*}
 \int_0^\infty  \Lambda_{(\infty)}^{n-s}\big(\{x\in\rn:\, |I_s f(x)|>t\}\big)\,dt \ls \|f\|_{H^1} \qquad \ \forall\ \ f\in\mathcal{S}_{s,0}.
\end{align*}
In particular, given  $u\in \cs$, we can take $$f=\nabla^s_+u=(-\Delta)^\frac s2 u,
$$
which belongs to $\cs_{s,0}$ via Lemma \ref{lem-H1}. Noting that Lemmas \ref{lem-x1} and \ref{lem-add3}(iii) imply
$$
\begin{cases} u=I_s f\\
 \nabla^s_-u= \vec{R}(-\Delta)^\frac s2 u=\vec{R}f\; \text{almost everywhere on}\; \rn\\
[u]_{H^{s,1}}=\|\nabla^s_+u\|_{L^1}+\|\nabla^s_- u\|_{L^1}=\|f\|_{L^1}+\|\vec{R}f\|_{L^1}=\|f\|_{H^1},
\end{cases}
$$
we obtain
\begin{align*}
\int_0^\infty \text{Cap}_{H^{s,1}}\big(\{x\in\rn:\, |u(x)|>t\}\big)\,dt
&\ls \int_0^\infty  \Lambda_{(\infty)}^{n-s}\big(\{x\in\rn:\, |I_s f(x)|>t\}\big)\,dt\\
&\ls \|f\|_{H^1}\\
&\approx [u]_{H^{s,1}},
\end{align*}
as desired.
\end{proof}

To establish the strong inequality for $\text{Cap}_{H_-^{s,1}(n>1)}$, we require the following lemma which generalizes \cite[Proposition~5]{Adams}.

\begin{lemma}\label{lem-3.3}
If $n\ge 2$ and $0\le\beta<\alpha<n$, then
\begin{align*}
 \lf(\int_0^\infty  \Lambda_{(\infty)}^{n-\beta}\big(\{x\in\rn:\, |I_\alpha f(x)|>t^\frac{n-\alpha}{n-\beta}\}\big)\,dt\r)^{\frac{n-\alpha}{n-\beta}} \lesssim\|\vec{R}f\|_{L^1}
 \qquad \ \forall\ \ f\in\cs_{\az,0}.
\end{align*}
\end{lemma}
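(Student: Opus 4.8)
\emph{Plan.} The plan is to recast the left-hand side as a Choquet--Lebesgue norm relative to Hausdorff content, to collapse it via the identity $\vec R f=-\nabla(I_1f)$ to a single Sobolev-type embedding into that Choquet space, and then to prove that embedding by an Adams-style atomic argument. First set $\gamma:=\frac{n-\alpha}{n-\beta}\in(0,1)$ and $q:=1/\gamma=\frac{n-\beta}{n-\alpha}\in[1,\infty)$; the substitution $t\mapsto t^{q}$ shows that the left-hand side equals, up to an absolute constant, the Choquet $L^q$-functional
$$
\|I_\alpha f\|_{L^{q}(\Lambda^{n-\beta}_{(\infty)})}:=\left(\int_0^\infty\Lambda^{n-\beta}_{(\infty)}\big(\big\{x\in\rn:\,|I_\alpha f(x)|>\tau\big\}\big)\,d(\tau^{q})\right)^{1/q}.
$$
A dilation computation shows both sides are homogeneous of degree $-n$ under $f\mapsto f(\lambda\,\cdot)$, so the inequality is scale invariant; we may assume $\|\vec R f\|_{L^1}<\infty$, and since $f\in\cs_{\alpha,0}$ the function $I_\alpha f$ is continuous and given pointwise by its Riesz-potential integral.

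Next I would exploit the structure forced by $\vec R$. As $f$ has vanishing integral, comparing Fourier multipliers gives $\nabla(I_1 f)=-\vec R f$ and $I_\alpha f=I_{\alpha-1}(I_1 f)$ in $\cs'/\mathcal P$. Writing $h:=I_1 f$, the first identity says $h\in\dot{W}^{1,1}$ with $\|\nabla h\|_{L^1}=\|\vec R f\|_{L^1}$, and the second says $I_\alpha f=I_{\alpha-1}h$ (a fractional smoothing of $h$ when $\alpha\ge1$, a fractional differentiation of order $1-\alpha<1$ when $\alpha<1$). Hence the lemma reduces to the Sobolev-into-Choquet inequality
$$
\|I_{\alpha-1}h\|_{L^{q}(\Lambda^{n-\beta}_{(\infty)})}\lesssim\|\nabla h\|_{L^1}\qquad\text{for }h\in\dot{W}^{1,1}.
$$
This is where $n\ge2$ is indispensable: for $n=1$ the exponent balance is borderline and the displayed inequality fails (e.g.\ for smooth approximations of $1_{[-1,1]}$ its left-hand side diverges logarithmically while $\|\nabla h\|_{L^1}$ stays bounded). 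It is equally essential to route everything through $\nabla(I_1 f)=-\vec R f$ rather than through the crude bound $|I_\alpha f|\le I_\alpha\big(\sum_j|R_j f|\big)$, since the latter discards the cancellation and the corresponding strong inequality is false --- already $\int_0^\infty\Lambda^{n-s}_{(\infty)}(\{I_s G>t\})\,dt=\infty$ for $G$ an approximate identity.

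To prove the Sobolev-into-Choquet inequality I would follow Adams' method (the scheme behind \cite[Proposition~5]{Adams} and, for $\alpha=1$, behind the classical strong capacitary inequality for $W^{1,1}$ together with $\text{Cap}_{W^{1,1}}(\cdot)\approx\Lambda^{n-1}_{(\infty)}(\cdot)$). Decompose $h$ by the co-area formula, $h=\int_0^\infty\big(1_{\{h>t\}}-1_{\{h<-t\}}\big)\,dt$ with $\int_{\mathbb R}P(\{h>t\})\,dt=\|\nabla h\|_{L^1}$, $P(\cdot)$ being the perimeter. Since the Choquet $L^q$-functional obeys a (quasi-)triangle and Minkowski inequality --- because $\Lambda^{n-\beta}_{(\infty)}$ is comparable to the dyadic Hausdorff content, which is strongly subadditive --- this reduces to the per-level-set estimate $\|I_{\alpha-1}1_E\|_{L^q(\Lambda^{n-\beta}_{(\infty)})}\lesssim P(E)$ for sets $E$ of finite perimeter, and then, by subadditivity and a Whitney/Calder\'on--Zygmund covering of $\partial^{\ast}E$, to the single-ball case $E=B(x_0,r)$. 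For a ball one checks directly that $|I_{\alpha-1}1_{B(x_0,r)}(x)|\lesssim r^{\alpha-1}$ on $2B$ away from $\partial B$, $\lesssim\dist(x,\partial B)^{\alpha-1}$ near $\partial B$ (when $\alpha<1$), and $\lesssim r^{n}|x-x_0|^{-(n-\alpha+1)}$ off $2B$; feeding these bounds into $\Lambda^{n-\beta}_{(\infty)}$ and using the exact balance $(\alpha-1)+\frac{n-\beta}{q}=n-1$, each of the three regimes contributes $\lesssim r^{(n-1)q}$ to $\|I_{\alpha-1}1_{B(x_0,r)}\|^{q}_{L^q(\Lambda^{n-\beta}_{(\infty)})}$, so that $\|I_{\alpha-1}1_{B(x_0,r)}\|_{L^q(\Lambda^{n-\beta}_{(\infty)})}\lesssim r^{n-1}\approx P(B(x_0,r))$. (Convergence of the near-boundary integral when $\alpha<1$ uses precisely $q<\frac{1-\beta}{1-\alpha}$, which holds because $\beta<\alpha$ and $n\ge2$; convergence of the far-field integral uses $q>\frac{n-\beta}{n-\alpha+1}$, which is always true.) Chaining the reductions yields $\|I_\alpha f\|_{L^q(\Lambda^{n-\beta}_{(\infty)})}\lesssim\|\nabla h\|_{L^1}=\|\vec R f\|_{L^1}$.

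The main obstacle I expect is exactly the per-level-set (equivalently per-atom) Choquet bound in the range $\alpha<1$: there $I_{\alpha-1}1_E$ is unbounded near $\partial E$, so finiteness of its Choquet $L^q$-norm hinges on the sharp exponent inequality $q<\frac{1-\beta}{1-\alpha}$, which is precisely what the hypothesis $n\ge2$ provides; and the subsequent summation from balls to general sets of finite perimeter requires the (quasi-)subadditivity/Minkowski property of the Choquet $L^q$-functional for Hausdorff content. A second, structural subtlety --- already dispatched in the second step --- is that the estimate genuinely needs the Riesz-system identity $\vec R f=-\nabla(I_1 f)$ and is false without it, as it is false for $n=1$.
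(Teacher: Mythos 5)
Your overall reduction (rewriting the left-hand side as the Choquet $L^q$-functional with $q=\frac{n-\beta}{n-\alpha}$, setting $h=I_1f$ so that $\nabla h=-\vec Rf$ and $I_\alpha f=I_{\alpha-1}h$, and aiming at $\|I_{\alpha-1}h\|_{L^q(\Lambda^{n-\beta}_{(\infty)})}\lesssim\|\nabla h\|_{L^1}$) is a genuinely different route from the paper's, and the target inequality is true; but your proof of it has a real gap at the step ``by subadditivity and a Whitney/Calder\'on--Zygmund covering of $\partial^{\ast}E$, to the single-ball case''. For $\alpha\ge 1$ the correct reduction is the boxing inequality --- a covering of $E$ itself (not of $\partial^{\ast}E$) by balls with $\sum_i r_i^{n-1}\lesssim P(E)$ --- and it only transfers because $I_{\alpha-1}$ then has a nonnegative kernel, so $1_E\le\sum_i 1_{B_i}$ passes through the operator. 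In the regime $\alpha<1$, which is exactly the one the paper needs (Theorem \ref{thm3.2} invokes the lemma with $\alpha=s\in(0,1)$), $I_{\alpha-1}=(-\Delta)^{\frac{1-\alpha}{2}}$ is nonlocal and non-monotone: $1_E\le\sum_i1_{B_i}$ gives no control of $|I_{\alpha-1}1_E|$, and splitting $E$ into disjoint pieces contained in the covering balls destroys the perimeter control (and the pieces are not balls, so your single-ball computation does not apply to them). The bound $|I_{\alpha-1}1_E|\lesssim\dist(\cdot,\partial E)^{\alpha-1}$ does not rescue the argument either, since $\Lambda^{n-\beta}_{(\infty)}$ of a $\delta$-neighbourhood of $\partial E$ is not bounded by $P(E)\,\delta^{1-\beta}$ for general finite-perimeter sets. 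Hence the per-level-set estimate $\|I_{\alpha-1}1_E\|_{L^q(\Lambda^{n-\beta}_{(\infty)})}\lesssim P(E)$ --- essentially the whole content of the lemma --- is left unproven in the relevant range; the Minkowski/subadditivity properties of the Choquet $L^q$-functional that you invoke are also only asserted, although these can be justified via the strong subadditivity of the dyadic content.

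For contrast, the paper sidesteps all of this: by Adams' duality the Choquet integral is comparable to $\sup_{\mu}\int_{\rn}|I_\alpha f|^{\frac{n-\beta}{n-\alpha}}\,d\mu$ over nonnegative Radon measures with $\||\mu\||_{n-\beta}\le1$; it then factors $I_\alpha=I_{\alpha-\epsilon}\circ I_\epsilon$ for small $\epsilon$, applies the $L^{p>1}$ trace inequality for Riesz potentials to $I_{\alpha-\epsilon}$, and concludes with the Schikorra--Spector--Van Schaftingen estimate $\|I_\epsilon f\|_{L^{n/(n-\epsilon)}}\lesssim\|\vec Rf\|_{L^1}$, which is the only place where $n\ge2$ and the Riesz-transform structure enter. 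If you wish to keep your BV-type route, you would need a genuine proof of the per-set estimate for arbitrary sets of finite perimeter (for instance by dominating $|I_{\alpha-1}1_E|$ by the potential $I_\alpha$ of the perimeter measure $|D1_E|$, or by quoting a fractional boxing-type result); as written, that step is not a covering formality but the missing heart of the argument.
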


\begin{proof} Let $f\in\cs_{\az,0}$. Note that Lemma \ref{lem-H1} implies $$f\in\cs_{\az,0}\subset H^1.
		$$
So, by this and the boundedness of each Riesz transform $R_j$ from $H^1$ to $L^1$, we derive $\|\vec{R}f\|_{L^1}<\infty$.
Upon applying \cite[p.\, 118, Corollary]{Adams} we have
\begin{align*}
 &\int_0^\infty  \Lambda_{(\infty)}^{n-\beta}\big(\{x\in\rn:\, |I_\alpha f(x)|>t^\frac{n-\alpha}{n-\beta}\}\big)\,dt \\
&\quad \approx \sup\lf\{
 \int_\rn |I_\alpha f|^\frac{n-\beta}{n-\alpha}\, d\mu:\, \mu\;\textup{nonnegative Radon measre},\, \||\mu\||_{n-\beta}\le 1
 \r\},
\end{align*}
where
$$\||\mu\||_{n-\beta}=\sup_{(x,r)\in\rn\times(0,\infty)} {r^{\beta-n}}\mu\big(B(x,r)\big).$$
Thus, the desired result follows from showing that
$$
\left(\int_\rn |I_\alpha f|^\frac{n-\beta}{n-\alpha}\, d\mu\right)^\frac{n-\alpha}{n-\beta}
\ls \|\vec{R}f\|_{L^1}
$$
holds when $\mu$ is a nonnegative Radon measure on $\rn$ with $\||\mu\||_{n-\beta}\le 1$ - surprisingly - this assertion cannot be extended to the case $\alpha=\beta$ (cf. \cite[Theorem 1.3]{Sp2019} which solves \cite[Open Problem 7.1]{Sp}).

Upon taking $$
\begin{cases}0<\epsilon<\frac{(\alpha-\beta)n}{n-\beta}\\
\bar\beta=n-\beta\\
\bar\alpha=\az-\ez\\
\bar p=\frac{n}{n-\epsilon},
\end{cases}
$$
we have
$$
\bar\beta+\bar\alpha\bar p>n\ \&\ 1<\bar p< n/\bar\alpha.
$$
For such $\bar \az,\bar p$ and $\bar\beta$, we apply  $$\||\mu\||_{\bar\beta}=\||\mu\||_{n-\beta}\le 1$$ and \cite[Theorem 1.1]{LXjde} (extending the main result in \cite{Apisa}) to derive
$$
\lf(\int_\rn \Big(I_{\bar\az} g\Big)^{\frac{\bar \beta\bar p }{n-\bar\alpha\bar p}}\,d\mu\r)^{^{\frac{n-\bar\alpha\bar p}{\bar \beta\bar p }}}\ls \|g\|_{L^{\bar p}}\quad\forall\quad g\in L^{\bar p}.$$
Moreover, it is proved in  \cite[Theorem A]{SSS17} that under the assumption $n\ge2$ one has
$$
\|I_\ez f\|_{L^{\frac n{n-\ez}}}\ls \|\vec Rf\|_{L^1}\qquad\forall\ \ f\in \cs.
$$
From the last two estimates and the fact $$\frac{\bar \beta\bar p }{n-\bar\alpha\bar p}=\frac{n-\beta}{n-\alpha},
$$ it follows that
\begin{align*}
\lf(\int_\rn |I_\alpha f|^\frac{n-\beta}{n-\alpha}\, d\mu\right)^\frac{n-\alpha}{n-\beta}&=
\lf(\int_\rn |I_{\alpha-\epsilon}(I_\epsilon f)|^\frac{n-\beta}{n-\alpha}\, d\mu\right)^\frac{n-\alpha}{n-\beta}
\lesssim \|I_\epsilon f\|_{L^\frac{n}{n-\epsilon}}
\lesssim \|\vec{R}f\|_{L^1},
\end{align*}
as desired; see also \cite[(1.7)]{GRS} for a similar estimate for an elliptic differential operator $\mathbb{A}[D]$.
\end{proof}

Finally, we arrive at the following strong type inequality for $\text{Cap}_{H^{s,1}_-}$.

\begin{theorem}\label{thm3.2}
If $0<\hat{s}<s<1<n$, then
$$
\lf(\int_0^\infty \text{Cap}_{H_-^{\hat{s},1}}\big(\{x\in\rn:\, |u(x)|>t^\frac{n-s}{n-\hat{s}}\}\big)\,dt\r)^\frac{n-s}{n-\hat{s}}\lesssim[u]_{H_-^{s,1}}\qquad \ \forall\ \ u\in \cs.
$$
\end{theorem}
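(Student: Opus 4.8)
The plan is to deduce the estimate from Lemma~\ref{lem-3.3} (applied with $\az=s$ and $\bz=\hat s$) in precisely the way Theorem~\ref{thm3.1} was deduced from \cite[Proposition~5]{Adams}; the theorem is essentially an assembly of the earlier machinery. First I would dominate the target capacity by a Hausdorff capacity: Proposition~\ref{p31}(iii) gives $\text{Cap}_{H^{\hat s,1}_-}(\cdot)\le\text{Cap}_{H^{\hat s,1}}(\cdot)\approx\Lambda^{n-\hat s}_{(\infty)}(\cdot)$, hence $\text{Cap}_{H^{\hat s,1}_-}(E)\ls\Lambda^{n-\hat s}_{(\infty)}(E)$ for every $E\subset\rn$. (Since $0<\hat s<s<1<n$ we have $n-\hat s\in(0,n)$, and as $n$ is an integer with $n>1$ also $n\ge2$, so the hypotheses of Lemma~\ref{lem-3.3} are met.) Applying this with $E=\{x\in\rn:\ |u(x)|>t^{\frac{n-s}{n-\hat s}}\}$ for each fixed $t>0$, then integrating in $t$ and raising to the positive, hence order-preserving, power $\frac{n-s}{n-\hat s}$, yields
$$\lf(\int_0^\infty\text{Cap}_{H^{\hat s,1}_-}\big(\{|u|>t^{\frac{n-s}{n-\hat s}}\}\big)\,dt\r)^{\frac{n-s}{n-\hat s}}\ls\lf(\int_0^\infty\Lambda^{n-\hat s}_{(\infty)}\big(\{|u|>t^{\frac{n-s}{n-\hat s}}\}\big)\,dt\r)^{\frac{n-s}{n-\hat s}}.$$

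Next, given $u\in\cs$, I would set $f=\nabla^s_+u=(-\Delta)^{s/2}u$. By Lemma~\ref{lem-H1}, $f\in\cs_{s,0}$, so Lemma~\ref{lem-3.3} applies to $f$; by Lemma~\ref{lem-x1}, $u=I_sf$ pointwise on $\rn$; and by Lemma~\ref{lem-add3}(iii), $\nabla^s_-u=\vec R(-\Delta)^{s/2}u=\vec Rf$ almost everywhere, so $[u]_{H^{s,1}_-}=\|\nabla^s_-u\|_{L^1}=\|\vec Rf\|_{L^1}$. Substituting $u=I_sf$ into the right-hand side of the display above and invoking Lemma~\ref{lem-3.3} with $\az=s$, $\bz=\hat s$ gives
$$\lf(\int_0^\infty\Lambda^{n-\hat s}_{(\infty)}\big(\{|I_sf|>t^{\frac{n-s}{n-\hat s}}\}\big)\,dt\r)^{\frac{n-s}{n-\hat s}}\ls\|\vec Rf\|_{L^1}=[u]_{H^{s,1}_-},$$
which chains with the previous display to finish the proof.

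I do not expect a genuine obstacle at this stage: all the real content is packaged inside Lemma~\ref{lem-3.3}, and beneath it in \cite[Theorem~A]{SSS17} (the $n\ge2$ Stein--Weiss bound $\|I_\ez f\|_{L^{\frac{n}{n-\ez}}}\ls\|\vec Rf\|_{L^1}$), the iterated-potential trace inequality \cite[Theorem~1.1]{LXjde}, and Adams' capacitary strong-type dual characterization \cite[p.\,118,~Corollary]{Adams}; the argument above is bookkeeping. The only points demanding attention are (i) that the structural identities $f\in\cs_{s,0}$, $u=I_sf$, and $\nabla^s_-u=\vec Rf$ are invoked only on test functions, where Lemmas~\ref{lem-H1}, \ref{lem-x1}, \ref{lem-add3}(iii) supply them, and (ii) that the comparison $\text{Cap}_{H^{\hat s,1}_-}\ls\Lambda^{n-\hat s}_{(\infty)}$ is inserted pointwise in $t$ \emph{before} any integration, so the sub-linearity hidden in the exponent $\frac{n-s}{n-\hat s}<1$ is harmless and no countable subadditivity of $\text{Cap}_{H^{\hat s,1}_-}$ (which may fail, by Proposition~\ref{p31}(iv)) is needed.
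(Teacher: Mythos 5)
Your proposal is correct and follows essentially the same route as the paper: dominate $\text{Cap}_{H^{\hat s,1}_-}$ by $\Lambda^{n-\hat s}_{(\infty)}$ via Lemma \ref{p31}(iii), write $u=I_sf$ with $f=(-\Delta)^{s/2}u\in\cs_{s,0}$ (Lemmas \ref{lem-x1} and \ref{lem-H1}), identify $[u]_{H^{s,1}_-}=\|\vec Rf\|_{L^1}$ via Lemma \ref{lem-add3}(iii), and invoke Lemma \ref{lem-3.3} with $\az=s$, $\bz=\hat s$. Your added remarks on checking the hypotheses of Lemma \ref{lem-3.3} and on avoiding any subadditivity of the capacity are consistent with, and slightly more explicit than, the paper's argument.
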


\begin{proof} Given $u\in\cs$. Lemmas \ref{p31} \& \ref{lem-add3} produce
$$
\begin{cases}\text{Cap}_{H^{\hat{s},1}_-}\ls \text{Cap}_{W^{\hat{s},1}} \approx \Lambda_{(\infty)}^{n-\hat{s}}\\
[u]_{H^{s,1}_-} =\|\nabla^s_-u\|_{L^1}=\|\vec{R}(-\Delta)^\frac s2 u\|_{L^1}.
\end{cases}
$$
So, based on the argument for Theorem \ref{thm3.1} and $$f=(-\Delta)^\frac s2 u\in\cs_{s,0}\ \ \text{or}\ \ u=I_s f,$$
it is enough to verify
$$
\lf(\int_0^\infty \text{Cap}_{H_-^{\hat{s},1}}\big(\{x\in\rn:\, |I_s f(x)|>t^\frac{n-s}{n-\hat{s}}\}\big)\,dt\r)^\frac{n-s}{n-\hat{s}}\lesssim\|\vec{R}f\|_{L^1} \qquad \ \forall\ \ f\in \cs_{s,0}.
$$
However, this last estimation is established in Lemma \ref{lem-3.3}.
\end{proof}

\subsubsection*{Weak estimates for $\text{Cap}_{H^{s,1}_\pm}$}\label{s3.3}
Although we do not know whether $\hat{s}$ and $s$ in Theorem \ref{thm3.2} can coincide, we have the following assertion.

\begin{theorem}\label{t31}
Let $0<s<1\le n$. Then
$$
[u]_{H^{s,1}_\pm}\ge\begin{cases}\sup_{t\in (0,\infty)}t\text{Cap}_{H^{s,1}_\pm}\big(\{x\in\rn:\, u(x)>t\}\big)\\
\sup_{t\in (0,\infty)}t\text{Cap}_{H^{s,1}_\pm}\big(\{x\in\rn:\, u(x)<-t\}\big),
\end{cases}
\qquad \ \forall\ \ u\in \mathcal{S}.
$$
But, if $X\in\big\{H^{s,1}_+,H^{s,1}_-(n=1)\big\}$ then there is no constant $C>0$ such that
$$
\int_0^\infty \text{Cap}_{X}\big(\{x\in\rn:\, |u(x)|>t\}\big)\,dt\le C[u]_{X}\qquad \ \forall\ \ u\in \mathcal{S}.
$$
\end{theorem}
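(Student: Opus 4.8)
The first inequality is soft and I would dispatch it by rescaling. Fix $u\in\mathcal S$ and $t\in(0,\infty)$; continuity of $u$ makes $\{x\in\rn:u(x)>t\}$ open, so by the inner‑regularity clause of Definition~\ref{d31} it suffices to bound $\text{Cap}_{H^{s,1}_\pm}(K)$ for each compact $K\subset\{u>t\}$. On such a $K$ one has $u/t\ge1$ with $u/t\in\mathcal S$, so $u/t$ is admissible in the infimum defining $\text{Cap}_{H^{s,1}_\pm}(K)$, and linearity of $\nabla^s_\pm$ gives $\text{Cap}_{H^{s,1}_\pm}(K)\le[u/t]_{H^{s,1}_\pm}=t^{-1}\|\nabla^s_\pm u\|_{L^1}=t^{-1}[u]_{H^{s,1}_\pm}$. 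Supremizing over $K$ and then over $t>0$ yields the first line; running the same argument with $-u/t\ge1$ on $\{u<-t\}$ (and $\|\nabla^s_\pm(-u)\|_{L^1}=\|\nabla^s_\pm u\|_{L^1}$) yields the second. The identical one‑line computation also covers $X=H^{s,1}$.

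For the negative statement I would argue by contradiction: suppose that for some $X\in\{H^{s,1}_+,\,H^{s,1}_-(n=1)\}$ there is $C>0$ with $\int_0^\infty\text{Cap}_X(\{|u|>t\})\,dt\le C[u]_X$ for all $u\in\mathcal S$. The plan is to squeeze out of this the \emph{strong} Sobolev inequality $\|u\|_{L^{\frac{n}{n-s}}}\ls[u]_X$ for $u\in\mathcal S$, which is precisely \eqref{e4} and was already recorded to be false (via \cite[p.\,119]{St} for $X=H^{s,1}_+$, and via the counterexample of \cite[Section~3.3]{SSS17} for $X=H^{s,1}_-$ with $n=1$). The bridge is a capacitary lower bound $\text{Cap}_X(E)\gs|E|^{\frac{n-s}{n}}$ for bounded open $E\subset\rn$. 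To obtain it I would use that the two cases defining $X$ are exactly those for which the weak Sobolev inequality \eqref{e4xx} reads $\|v\|_{L^{\frac{n}{n-s},\infty}}\ls[v]_X$ for $v\in\mathcal S$: then for compact $K$ and any admissible $v\ge1$ on $K$ one has $|K|\le\big|\{|v|\ge1\}\big|\ls\|v\|_{L^{\frac{n}{n-s},\infty}}^{\frac{n}{n-s}}\ls[v]_X^{\frac{n}{n-s}}$, so taking the infimum over $v$ and then invoking inner regularity of $\text{Cap}_X$ and of Lebesgue measure gives $\text{Cap}_X(E)\gs|E|^{\frac{n-s}{n}}$; this applies to $E=\{|u|>t\}$, which is bounded because $u\in\mathcal S$.

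Assembling the pieces, for every $u\in\mathcal S$ one then has
$$\|u\|_{L^{\frac{n}{n-s}}}\le c\int_0^\infty\big|\{|u|>t\}\big|^{\frac{n-s}{n}}\,dt\ls\int_0^\infty\text{Cap}_X\big(\{|u|>t\}\big)\,dt\le C[u]_X,$$
where the first step is the elementary embedding $L^{\frac{n}{n-s},1}\hookrightarrow L^{\frac{n}{n-s}}$ (equivalently $\int_0^\infty\lambda_u(t)^{\frac{n-s}{n}}\,dt\approx\|u\|_{L^{\frac{n}{n-s},1}}\ge\|u\|_{L^{\frac{n}{n-s}}}$, valid since $\tfrac{n}{n-s}\ge1$), the second is the capacitary lower bound, and the third is the assumed hypothesis. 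This is the forbidden \eqref{e4}, a contradiction, so no such $C$ exists. Everything here is formal except the capacitary lower bound, and that is the step I expect to be the crux: it is precisely where the endpoint mapping properties behind \eqref{e4xx} — the weak‑type $(1,\tfrac{n}{n-s})$ boundedness of $I_s$, reinforced by the weak‑$L^{\frac{n}{n-s}}$ boundedness of $\vec R=(R_1)$ when $n=1$ — genuinely enter, the remainder being rescaling and the layer‑cake formula.
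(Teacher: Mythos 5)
Your proposal is correct and follows essentially the same route as the paper: the positive part by the rescaling/admissibility argument through inner regularity, and the negative part by contradiction, deducing the isocapacitary lower bound $|K|^{\frac{n-s}{n}}\lesssim\text{Cap}_X(K)$ from the weak-type inequality \eqref{e4xx} (which the paper re-derives in-proof as \eqref{eww} from Adams' weak estimate for $I_s$ and the weak-$L^{\frac{n}{n-s}}$ bound for the Riesz transforms) and then using the layer-cake/Lorentz embedding $L^{\frac{n}{n-s},1}\hookrightarrow L^{\frac{n}{n-s}}$ to reach the forbidden strong inequality \eqref{e4}. The only difference is bookkeeping—you cite \eqref{e4xx} directly where the paper reproves it—so the arguments coincide in substance.
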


\begin{proof}
For $(t,u)\in(0,\infty)\times\mathcal{S}$, since $$\{x\in\rn:\, u(x)>t\}$$ is open, by the definition of $\text{Cap}_{X}$, for any $\epsilon\in(0,\infty)$ there exists a compact set $$K\subset \{x\in\rn:\, u(x)>t\}$$ such that
$$
\text{Cap}_{H^{s,1}_\pm}\big(\{x\in\rn:\, u(x)>t\}\big)< \text{Cap}_{H^{s,1}_\pm}(K)+\ez.
$$
Let
$v=
t^{-1}u$. Then
$$
v\in \mathcal{S}\ \ \&\ \ v>1\ \text{on}\ K.
$$
Accordingly, by definition we have
$$
\text{Cap}_{H^{s,1}_\pm}(K)\le [v]_X= t^{-1}[u]_{H^{s,1}_\pm},
$$
which implies
$$\text{Cap}_{H^{s,1}_\pm}\big(\{x\in\rn:\, u(x)>t\}\big)< t^{-1}[u]_{H^{s,1}_\pm}+\ez.$$
Letting $\ez\to0$ gives the desired estimate
$$\sup_{t\in (0,\infty)}t\text{Cap}_{H^{s,1}_\pm}\big(\{x\in\rn:\, u(x)>t\}\big)\le [u]_X.$$
Since $$u(x)<-t\Leftrightarrow-u(x)>t\ \ \&\ \ [-u]_{H^{s,1}_\pm}=[u]_{H^{s,1}_\pm},
$$
we get
$$
\text{Cap}_{H^{s,1}_\pm}\big(\{x\in\rn:\, u(x)<-t\}\big)\le t^{-1}[u]_{H^{s,1}_\pm}.
$$

In order to verify the nonexistence of the capacitary strong estimate for $X$ under consideration,
we note  that
$$
\|u\|_{L^{\frac{n}{n-s},\infty}}=\sup_{t>0}t\big|\big\{x\in\rn: |u(x)|>t\big\}\big|^\frac{n-s}{n}\lesssim [u]_{H^{s,1}_\pm}\qquad \ \forall\ \ u\in \mathcal{S},
$$
which 
follows from
\begin{equation}
\label{eww}
\|I_sf\|_{L^{\frac{n}{n-s},\infty}}\lesssim \min\Big\{\|f\|_{L^1},\|\vec{R}f\|_{L^1}\Big\}\qquad \ \forall\ \ f\in {(-\Delta)^\frac s2 \cs \subset \mathcal{S}_{s,0}}.
\end{equation}
In fact,
$$\|I_sf\|_{L^{\frac{n}{n-s},\infty}}\lesssim \|f\|_{L^1}
$$
can be seen from \cite{Aduke}.
This last inequality, along with the fact (from the Fourier transform) that
$$
I_sf=-\sum_{j=1}^n R_j^2I_sf=-\sum_{j=1}^n R_jI_s(R_jf){\quad  \text{almost everywhere on }\; \rn}
$$
and (cf. \cite[(1.5)]{O})
$$
\|R_ju\|_{L^{\frac{n}{n-s},\infty}}\lesssim \|u\|_{L^{\frac{n}{n-s},\infty}},
$$
in turn derives
$$
\|I_sf\|_{L^{\frac{n}{n-s},\infty}}\lesssim \|\vec{R}f\|_{L^1}.
$$
Both \eqref{eww} and the definition of $\text{Cap}_{H^{s,1}_\pm}$
give the iso-capacitary inequality
\begin{equation}
\label{isoc}
|K|^\frac{n-s}{n}\lesssim \text{Cap}_{H^{s,1}_\pm}(K)\quad\forall\quad \text{compact}\quad K\subset\rn.
\end{equation}

We use \eqref{isoc} to verify the failure of the capacitary strong estimate for $X\in\big\{H^{s,1}_+,H^{s,1}_-(n=1)\big\}$. Suppose otherwise that
$$\exists\ C_0>0\ \ \text{such that}\ \
\int_0^\infty \text{Cap}_{X}\big(\{x\in\rn:\, |u(x)|>t\}\big)\,dt\le C_0[u]_{X}\qquad \ \forall\ \ u\in \mathcal{S}.
$$
Then, a standard layer-cake method (cf. \cite[p.101]{LXYY}) and \eqref{isoc} derive a constant $C_1$ depending on $C_0$ such that any $u\in\cs$ satisfies
\begin{align*}
\|u\|^\frac{n}{n-s}_{L^\frac{n}{n-s}}&=\int_0^\infty|\{x\in\rn: |u(x)|>t\}|\,dt^\frac{n}{n-s}\\
&\ls  \left(\int_0^\infty|\{x\in\rn: |u(x)|>t\}|^\frac{n-s}{n}\,dt\right)^\frac{n}{n-s}\\
&\lesssim \left(\int_0^\infty\text{Cap}_{X}\big(\{x\in\rn: |u(x)|>t\}\big)\,dt\right)^\frac{n}{n-s}\\
&\lesssim \big(C_1[u]_{X}\big)^\frac{n}{n-s}.
\end{align*}
This contradicts the failure of \eqref{e4} mentioned in \S \ref{s12}, thereby completing the verification.

\end{proof}

\subsection{Restrictions/traces of $H^{s,1}\ \&\ H^{s,1}_\pm$}\label{s-trace}

Being motivated by \cite[Theorem 1.4]{Xadv} for $W^{s,1}$, we establish the coming-up-next restricting/tracing principle.

\begin{theorem}\label{thm1}
Let $0<s<1\le n$, $\mu$ be a nonnegative Radon measure on $\mathbb R^n$ and
$$
\begin{cases}\|u\|_{L^\frac{n}{n-s}(\mu)}=\bigg(\int_{\rn}|u|^\frac{n}{n-s}\,d\mu\bigg)^\frac{n-s}{n}\\
\|u\|_{L^{\frac{n}{n-s},\infty}(\mu)}=\sup_{t>0}t\mu\Big(\big\{x\in\rn: |u(x)|>t\big\}\Big)^\frac{n-s}{n}.
\end{cases}
$$
Then the following two assertions are equivalent:
\begin{enumerate}
\item[\rm (i)] there exists a positive constant $c$ such that
\begin{equation*}
\big(\mu(K)\big)^\frac{n-s}{n}\le c\text{Cap}_{X}(K)\qquad \ \forall\ \ \text{compact}\; K\subset\mathbb R^n.
\end{equation*}
\item[\rm (ii)] there exists a positive constant $C$ such that
\begin{equation*}
C[u]_X\ge\begin{cases}\|u\|_{L^\frac{n}{n-s}(\mu)}\ \ &\text{as}\ \ X=H^{s,1}\\

\|u\|_{L^{\frac{n}{n-s},\infty}(\mu)}\ \ &\text{as}\ \ X=H^{s,1}_\pm
\end{cases}
 \qquad \ \forall\ u\in \mathcal{S}.
\end{equation*}
\end{enumerate}
Moreover, the constants $c$ and $C$ are comparable to each other.
\end{theorem}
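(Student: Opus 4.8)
The plan is to prove the two implications of the equivalence separately, obtaining in each direction an implied constant that is universal, so that tracking these constants at the end yields the asserted comparability $c\approx C$. Throughout I set $q=\tfrac{n}{n-s}\in[1,\infty)$, so the exponent $\tfrac{n-s}{n}$ equals $1/q$.

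\textbf{From (ii) to (i).} This is the soft direction; I would use only the definitions. Given a compact $K\subset\rn$ and any $u\in\cs$ with $u\ge1$ on $K$: when $X=H^{s,1}$, $\int_{\rn}|u|^q\,d\mu\ge\int_K 1\,d\mu=\mu(K)$, so $(\mu(K))^{1/q}\le\|u\|_{L^q(\mu)}\le C[u]_X$; when $X=H^{s,1}_\pm$, for every $t\in(0,1)$ one has $K\subset\{|u|>t\}$, hence $t\,(\mu(K))^{1/q}\le t\,\mu(\{|u|>t\})^{1/q}\le\|u\|_{L^{q,\infty}(\mu)}\le C[u]_X$, and letting $t\uparrow1$ gives $(\mu(K))^{1/q}\le C[u]_X$. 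Taking the infimum over such $u$ produces (i) with $c=C$.

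\textbf{From (i) to (ii), case $X=H^{s,1}_\pm$.} Here I would first upgrade (i) to open sets: for $u\in\cs$ and $t>0$ the sets $\{u>t\}$ and $\{u<-t\}$ are open and bounded, since $u$ vanishes at infinity, and by inner regularity of $\mu$, by (i), and by monotonicity of $\text{Cap}_X$ (Lemma \ref{p31}(ii)),
\[
(\mu(\{u>t\}))^{1/q}=\sup\big\{(\mu(K))^{1/q}:\ K\subset\{u>t\}\ \text{compact}\big\}\le c\,\text{Cap}_X(\{u>t\}),
\]
and similarly for $\{u<-t\}$. Then I would invoke the weak capacitary estimates of Theorem \ref{t31}, $t\,\text{Cap}_X(\{u>t\})\le[u]_X$ and $t\,\text{Cap}_X(\{u<-t\})\le[u]_X$, to get $\mu(\{u>t\}),\,\mu(\{u<-t\})\le(c\,t^{-1}[u]_X)^q$; since $\{|u|>t\}=\{u>t\}\cup\{u<-t\}$, finite additivity of $\mu$ gives $t\,(\mu(\{|u|>t\}))^{1/q}\le 2^{1/q}c\,[u]_X$ for all $t>0$, and taking the supremum over $t$ is exactly (ii) with $C=2^{1/q}c$.

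\textbf{From (i) to (ii), case $X=H^{s,1}$.} The same upgrade of (i) gives $(\mu(\{|u|>t\}))^{1/q}\le c\,g(t)$, where $g(t):=\text{Cap}_{H^{s,1}}(\{|u|>t\})$ is nonincreasing in $t$; moreover $g(t)\le\text{Cap}_{H^{s,1}}(B(0,R_t))=R_t^{\,n-s}\,\text{Cap}_{H^{s,1}}(B(0,1))$ whenever $B(0,R_t)\supset\{|u|>t\}$ (Lemma \ref{p31}(i)--(ii)), and the rapid decay of $u\in\cs$ lets one take $R_t\lesssim_N t^{-1/N}$ for any $N>0$, so $g(t)\lesssim_N t^{-(n-s)/N}$. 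I would then combine the layer--cake identity with (i),
\[
\int_{\rn}|u|^q\,d\mu=q\int_0^\infty t^{q-1}\mu(\{|u|>t\})\,dt\le q\,c^q\int_0^\infty t^{q-1}g(t)^q\,dt,
\]
split the $t$--integral over the dyadic intervals $[2^k,2^{k+1})$ and use monotonicity of $g$ to bound it by $\sum_{k\in\zz}\big(2^kg(2^k)\big)^q$ (a series that converges since $u\in\cs$ permits $N>n-s$), then use $q\ge1$ to bound this by $\big(\sum_k 2^kg(2^k)\big)^q$ and a second dyadic comparison to reach $\big(\int_0^\infty g(t)\,dt\big)^q$, and finally apply the strong capacitary inequality of Theorem \ref{thm3.1}, $\int_0^\infty\text{Cap}_{H^{s,1}}(\{|u|>t\})\,dt\lesssim[u]_{H^{s,1}}$. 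This chain yields $\|u\|_{L^q(\mu)}\lesssim c\,[u]_{H^{s,1}}$, i.e. (ii) with $C\lesssim c$; combined with the first part, $c\approx C$.

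\textbf{Main obstacle.} I expect the third step to be the only delicate one: one must promote the capacitary inequality (i) — valid only on \emph{compact} sets, and for a set function $\text{Cap}_{H^{s,1}}$ which, unlike $\text{Cap}_{W^{s,1}}$, need not be countably subadditive (Lemma \ref{p31}(iv)) — to a genuine trace inequality. The way around this is to arrange the estimate so that the only (sub)additivity used is that of the measure $\mu$, letting the summation over levels be carried entirely by the strong capacitary inequality of Theorem \ref{thm3.1} rather than by any property of $\text{Cap}_{H^{s,1}}$ itself; the Schwartz decay of $u$ enters solely to guarantee convergence of the dyadic series. The weak case is lighter because Theorem \ref{t31} already supplies the needed capacitary bound and only finite additivity of $\mu$ is invoked — which is also the structural reason why the weak, rather than strong, norm is the right object on the $H^{s,1}_\pm$ side.
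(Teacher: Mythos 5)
Your proposal is correct and follows essentially the same route as the paper: (ii)$\Rightarrow$(i) by testing with admissible $u\in\cs$, the weak case via the capacitary weak-type bounds of Theorem \ref{t31} together with subadditivity of $\mu$ only, and the strong case via a dyadic layer-cake decomposition, the inequality $\sum_k a_k^q\le(\sum_k a_k)^q$ for $q=\tfrac{n}{n-s}\ge1$, and the strong capacitary inequality of Theorem \ref{thm3.1}. The only cosmetic differences are that you pass from compact to open sets at the outset via inner regularity (the paper instead picks compact $K_k\subset E_{2^k}$ with $\mu(E_{2^k})\le2\mu(K_k)$), and your decay estimate for $\text{Cap}_{H^{s,1}}(E_t)$ is unnecessary since Theorem \ref{thm3.1} already makes the relevant integral finite.
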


\begin{proof} The consequence part of Theorem \ref{thm1} follows from (i)$\Leftrightarrow$(ii) and
	$$
	[u]_X\gtrsim \begin{cases}\|u\|_{L^\frac{n}{n-s}}\ \ &\text{as}\ \ X=H^{s,1}\\
		
		\|u\|_{L^{\frac{n}{n-s},\infty}}\ \ &\text{as}\ \ X=H^{s,1}_\pm
	\end{cases}
	\qquad \ \forall\ u\in \mathcal{S}.
$$
	So, we are required to validate 	(i)$\Leftrightarrow$(ii).		
Two cases are considered for
	$$
	\begin{cases}
u\in \mathcal{S}\\
	t\in(0,\fz)\\
	E_t=\big\{x\in\rn:\, |u(x)|>t\big\}\\
	E_{t,+}=\big\{x\in\rn:\, u(x)>t\big\}\\
E_{t,-}=\big\{x\in\rn:\, u(x)<-t\big\}.
	\end{cases}
	$$
	
	{\it Case 1}: (i)$\Leftrightarrow$(ii) for $X=H^{s,1}_\pm$.
	
	On the one hand, if (i) holds, then the subadditivity of $\mu$, the decomposition
	$$E_t=E_{t,+}\cup E_{t,-},$$ and Theorem \ref{t31} derive
	\begin{align*}
	\big(\mu(E_t)\big)^\frac{n-s}{n}&\le\big(\mu(E_{t,+})+\mu(E_{t,-})\big)^\frac{n-s}{n}\\
	&\le {\big(\mu(E_{t,+})\big)}^\frac{n-s}{n}+\big(\mu(E_{t,-})\big)^\frac{n-s}{n}\\
	&\lesssim \text{Cap}_X(E_{t,+})+\text{Cap}_X(E_{t,-})\\
	&\lesssim t^{-1}[u]_X,
	\end{align*}
	thereby verifying (ii).
	
	On the other hand, suppose that (ii) is valid. For any compact $K\subset\rn$ let
	$$
	u\in\mathcal{S}\ \ \&\ \ u\ge 1\ \ \text{on}\ \ K.
	$$
	Then
$$	
\big(\mu(K)\big)^\frac{n-s}{n}\le\big(\mu(E_{1,+})\big)^\frac{n-s}{n}\le\big(\mu(E_1)\big)^\frac{n-s}{n}\lesssim [u]_X.
$$
Accordingly, by definition we reach (i).

{\it Case 2:} (i)$\Leftrightarrow$(ii) for $X=H^{s,1}$.

On the one hand, for any $k\in\zz$ the open set $E_{2^k}$ has a compact subset $K_k$ such that
$$\mu(E_{2^k})\le 2\mu(K_k).$$
Thus, if (i) is valid, then
\begin{align*}
\int_{\mathbb R^n}|u|^\frac{n}{n-s}\,d\mu
& =\sum_{k\in\zz}\int_{2^k}^{2^{k+1}} \mu(E_t)\, dt^\frac{n}{n-s}\nonumber\\
&\le (2^\frac{n}{n-s}-1) \sum_{k\in\zz} 2^{\frac{kn}{n-s}} \mu(E_{2^k})\notag\nonumber\\
&\le 2^{\frac{n}{n-s}+1} \sum_{k\in\zz} 2^{\frac{kn}{n-s}} \mu(K_k)\\
&\le c^\frac{n}{n-s}2^{\frac{n}{n-s}+1} \sum_{k\in\zz} 2^{\frac{kn}{n-s}}\big(\text{Cap}_{X}(K_k)\big)^{\frac{n}{n-s}}\notag\\
&\le c^\frac{n}{n-s} 2^{\frac{n}{n-s}+1} \sum_{k\in\zz} 2^{\frac{kn}{n-s}}\big(\text{Cap}_{X}(E_{2^k})\big)^{\frac{n}{n-s}} \notag
\end{align*}
Note that for any {nonnegative} sequence $\{a_j\}_{j\in\zz}$,
$$
\lf(\sum_{j\in\zz} a_j\r)^\kz\le \sum_{j\in\zz} a_j^\kz
\quad\forall\ \ \kz\in(0,1].
$$
This in turn gives
\begin{align*}
\sum_{k\in\zz} 2^{\frac{kn}{n-s}}\big(\text{Cap}_{X}(E_{2^k})\big)^{\frac{n}{n-s}}
\le  \lf(\sum_{k\in\zz} 2^{k}\,\text{Cap}_{X}(E_{2^k})\r)^\frac{n}{n-s}.
\end{align*}
Moreover, by Lemma \ref{p31}(ii) it follows that
\begin{align*}
\sum_{k\in\zz} 2^{k}\,\text{Cap}_{X}(E_{2^k})
& = 2\sum_{k\in\zz} \int_{2^{k-1}}^{2^k} {\text{Cap}_{X}}(E_{2^k})\,dt\notag\\
&\le 2\sum_{k\in\zz} \int_{2^{k-1}}^{2^k} \text{Cap}_{X}(E_t)\,dt\\
&= 2
\int_0^\infty \text{Cap}_{X}(E_t)\,dt. \notag
\end{align*}
Altogether, we use Theorem \ref{thm3.1}  to obtain
\begin{align*}
\int_{\mathbb R^n}|u|^\frac{n}{n-s}\,d\mu
\ls  \lf(
\int_0^\infty \text{Cap}_{X}(E_t)\,dt \r)^\frac{n}{n-s} \lesssim [u]_{X}^\frac{n}{n-s},
\end{align*}
which implies (ii).

On the other hand, suppose that (ii) is true. Upon letting $K$ be a  compact subset of $\rn$ we gain that for any $u\in \mathcal{S}$ with $u\ge 1$ on $K$,
\begin{align*}
\big(\mu(K)\big)^\frac{n-s}{n}\le \left(\int_{\mathbb R^n}|u|^\frac{n}{n-s}\,d\mu\right)^\frac{n-s}{n}\lesssim [u]_{X}.
\end{align*}
Via taking the supremum over all such $u\in \mathcal{S}$ with $u\ge 1$ on $K$ we get (i).
\end{proof}

\section{Duality laws for $H^{s,1}\, \&\, \mathring{H}^{s,1}_\pm$}\label{s5}

\subsection{Adjoint operators of $\nabla^s_\pm$ via $\{\mathcal{S},\BMO\}$}\label{s51}
This subsection describes the adjoint operators of $\nabla^s_{\pm}$ (existing as two basic notions in fractional vector calculus).

\subsubsection*{Integration-by-parts}\label{s511} Below is a two-fold computation.

\begin{itemize}
	
	\item[$\rhd$] On the one hand, the dual operator $\big[(-\Delta)^\frac{s}{2}\big]^\ast$ of $(-\Delta)^\frac{s}{2}$ is itself, i.e., $$[\nabla^s_+]^\ast=\nabla^s_+,$$
in the sense of \begin{align*}
\laz [\nabla^s_+]^\ast f,\,\phi\raz=\laz f,\,\nabla^s_+ \phi \raz =\laz \nabla^s_+ f,\,\phi\raz
\qquad \ \forall\, (f,\phi)\in\cs_s'\times\cs.
\end{align*}
This is reasonable because of (cf. \cite{S})
	\begin{equation*}
	\label{eIBP1}
	\begin{cases}
	\int_{\mathbb R^n}(-\Delta)^\frac{s}{2}f(x)\phi(x)\,dx=\int_{\mathbb R^n}f(x)(-\Delta)^\frac{s}{2}\phi(x)\,dx\\
	\int_{\mathbb R^n}f(x)I_s\phi(x)\,dx=\int_{\mathbb R^n}I_sf(x)\phi(x)\,dx
	\end{cases}
	\ \forall\ (f,\phi)\in (C_c^\infty)^2
	\end{equation*}
	and
	$$
	(-\Delta)^\frac{s}{2}\big((-\Delta)^\frac{s}{2}u\big)=(-\Delta)^{s}u\qquad \forall\ \ u\in C_c^\infty.
	$$

	\item[$\rhd$] On the other hand, if we define
\begin{align*}\label{eq-div}
	\text{div}^s\vec{g}=(-\Delta)^{\frac s2}\vec{R}\cdot\vec{g}
\end{align*}
then  it enjoys (cf. \cite[Theorem 1.3]{SS1})
	$$
	-\text{div}^s(\nabla^s_- u)=(-\Delta)^su \qquad \forall\ \  u\in C_c^\infty
	$$
	and (cf. \cite[Lemma 2.5]{CS})
\begin{equation*}
	\label{eIBP2}\int_{\mathbb R^n}f(x)(-\text{div}^s\vec{g})(x)\,dx=\int_{\mathbb R^n}\vec{g}(x)\cdot \nabla^s_- f(x)\,dx\qquad \forall\ \ (f,\vec{g})\in C_c^\infty\times (C_c^\infty)^n.
	\end{equation*}
Thus $-\text{div}^s$ exists as the dual operator $\big[\nabla^s_{-}\big]^\ast$  of $\nabla^s_{-}$, i.e., $$[\nabla^s_-]^\ast=-\text{div}^s.$$
\end{itemize}

\subsubsection*{Dual pairing for $\{\mathcal S,\BMO\}$}\label{s512} We are required to verify that $\BMO$ can be embedded in a family of relatively bigger spaces.

\begin{lemma}\label{lem-add2}
If $s\in(0,1)$, then $\BMO\subset\cs_s'$.
\end{lemma}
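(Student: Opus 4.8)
The plan is to show that any $f\in\BMO$ defines a tempered distribution that, when restricted to the test class $\cs_\infty$ (functions with all moments vanishing), is continuous in the topology inherited from $\cs$, and moreover that the modulo-polynomial ambiguity of the pairing is harmless because our test functions are required to lie only in $\cs$ rather than $\cs_\infty$; so in fact we must check that $\langle f,\phi\rangle$ makes sense for $\phi\in\cs$ — but here one should note that $\BMO\subset L^1_{\loc}$ and functions in $\BMO$ grow at most logarithmically, so the issue is only the behavior at infinity, controlled by the $(1+|x|^{n+s})$-weight built into $\cs_s$. Concretely, I would recall the classical growth estimate for $\BMO$ functions: if $f\in\BMO$ then for a.e.\ $x$,
\begin{equation*}
|f(x)-f_{B(0,1)}|\lesssim \|f\|_\BMO\big(1+\log(1+|x|)\big),
\end{equation*}
which follows by a telescoping argument over the dyadic annuli $B(0,2^{k+1})\setminus B(0,2^k)$ together with the John--Nirenberg defining inequality. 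In particular $f\in\mathbb L_s$ for every $s>0$, since $\int_\rn \frac{\log(2+|x|)}{1+|x|^{n+s}}\,dx<\infty$.

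**First**, I would fix the representative: modulo constants we may assume $f_{B(0,1)}=0$, so $f$ is a genuine locally integrable function with the logarithmic bound above. Then for $\phi\in\cs$ I would define
\begin{equation*}
\langle f,\phi\rangle=\int_\rn f(x)\phi(x)\,dx,
\end{equation*}
and the absolute convergence is immediate from $|f(x)|\lesssim \log(2+|x|)$ and the rapid decay of $\phi$; boundedness of this functional in terms of finitely many Schwartz seminorms $\rho_{N,0}(\phi)$ with $N=n+s+1$, say, is equally immediate. This shows $f\in\cs'$. To land in $\cs_s'$ specifically — i.e.\ to see that this functional extends continuously with respect to the coarser seminorm system $\{\rho_{n+s,\alpha}\}$ defining $\cs_s$ — one uses exactly the same estimate: $|\langle f,\phi\rangle|\lesssim \|f\|_\BMO\,\rho_{n+s,0}(\phi)\int_\rn\frac{\log(2+|x|)}{1+|x|^{n+s}}\,dx\lesssim\|f\|_\BMO\,\rho_{n+s,0}(\phi)$, so the pairing is continuous on $\cs$ in the $\cs_s$-topology and therefore defines an element of the dual $\cs_s'$, which is what the lemma asserts.

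**The only genuine subtlety**, and the step I would be most careful about, is making the identification respect the quotient structures in play: $\BMO$ is itself a space modulo constants, $\cs_s'$ is a subspace of $\cs'$ (not of $\cs'/\mathcal{P}$), and the natural pairing of a $\BMO$ function against a general Schwartz function is \emph{not} constant-independent. I would resolve this by observing that the statement $\BMO\subset\cs_s'$ is to be read as: after choosing any representative with $f_{B(0,1)}=0$ (or any other normalization), the resulting distribution lies in $\cs_s'$, and changing the representative by a constant $c$ changes the functional by $\phi\mapsto c\int\phi$, which is still continuous in the $\cs_s$-topology — so the image in $\cs_s'$ is well-defined up to the one-dimensional space spanned by the functional $\phi\mapsto\int_\rn\phi$, consistently with $\BMO$ being a space modulo constants. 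I would also remark that this is the natural domain on which $\nabla^s_\pm$ can subsequently act (via Definitions~\ref{defn1} and \ref{defn4}), which is presumably the point of isolating this inclusion before the duality theorems. The logarithmic $\BMO$-growth lemma is the one external ingredient; everything else is bookkeeping about which seminorms control the integral.
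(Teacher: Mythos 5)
Your overall plan coincides with the paper's: reduce the lemma to the statement that $f\in\mathbb L_s$, i.e. $\int_\rn\frac{|f(x)|}{1+|x|^{n+s}}\,dx<\infty$, and then read off continuity of $\phi\mapsto\int_\rn f\phi$ against the single seminorm $\rho_{n+s,0}$; your remarks about the constant ambiguity are also fine. However, the one external ingredient you rely on is false as stated: $\BMO$ functions satisfy no pointwise (not even a.e.) bound of the form $|f(x)-f_{B(0,1)}|\lesssim\|f\|_\BMO\log(2+|x|)$. For example, $f(x)=\log|x-x_0|$ has $\BMO$ norm independent of $x_0$, while for $|x-x_0|\le |x_0|^{-N}$ one has $|f(x)-f_{B(0,1)}|\gtrsim N\log|x_0|$ on a set of positive measure, whereas $\log(2+|x|)\approx\log|x_0|$ there; so no fixed constant can work. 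The John--Nirenberg inequality gives exponential integrability of $f-f_B$ on each ball, never pointwise control, and the telescoping over dyadic scales yields only the \emph{averaged} growth estimate $\frac{1}{|B(0,2^j)|}\int_{B(0,2^j)}|f-f_{B(0,1)}|\,dx\lesssim j\,\|f\|_\BMO$, i.e. logarithmic growth of ball averages rather than of values.

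The gap is repairable without altering your structure, because the averaged bound is all that is needed: splitting the integral over the annuli $2^{j-1}\le|x|<2^j$ gives
$\int_{2^{j-1}\le|x|<2^j}\frac{|f(x)|}{1+|x|^{n+s}}\,dx\lesssim 2^{-js}\Big(\frac{1}{|B(0,2^j)|}\int_{B(0,2^j)}|f|\,dx\Big)\lesssim 2^{-js}\big(j\|f\|_\BMO+|f_{B(0,1)}|\big)$,
and $\sum_{j\ge1} j\,2^{-js}<\infty$, which yields $|\langle f,\phi\rangle|\lesssim\rho_{n+s,0}(\phi)\big(\|f\|_\BMO+|f_{B(0,1)}|\big)$. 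This is exactly the computation in the paper's proof, which works only with ball averages and never claims pointwise control; as written, your cited ``classical growth estimate'' is not a correct statement about $\BMO$, so the step introducing it would fail and must be replaced by its averaged version.
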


\begin{proof} 
In order to verify $f\in\cs_s'$, it suffices to show that $f\in\BMO$ induces a continuous linear functional on $\cs_s$. To this end, we consider
$$
L_f(\phi)=\int_\rn f(x)\phi(x)\,dx\qquad\forall\ \ \phi\in\cs_s.
$$
Upon writing
\begin{align*}
\lf|\int_\rn f(x)\phi(x)\,dx\r|
&\le \int_{B(0,1)}|f(x)\phi(x)|\, dx
+\sum_{j=1}^\infty \int_{2^{j-1}\le |x|<2^j} |f(x)\phi(x)|\, dx,
\end{align*}
and noting both
\begin{align*}
\int_{B(0,1)}|f(x)\phi(x)|\, dx
&\le \|\phi\|_{L^\infty} \int_{B(0,1)} |f(x)|\,dx\\
&\ls \|\phi\|_{L^\infty}  \lf(\frac1{|B(0,1)|}\int_{B(0,1)} |f(x)-f_{B(0,1)}|\,dx +|f_{B(0,1)}|\r)\\
&\ls  \rho_{n+s,0}(\phi)\lf(\|f\|_\BMO +|f_{B(0,1)}|\r)
\end{align*}
and
\begin{align*}
\int_{2^{j-1}\le |x|<2^j}|f(x)\phi(x)|\, dx
&\ls \rho_{n+s,0}(\phi) \int_{2^{j-1}\le |x|<2^j}\frac{|f(x)}{1+|x|^{n+s}}\, dx\\
&\ls  \rho_{n+s,0}(\phi) 2^{-js}\lf(\frac1{|B(0,2^j)|}\int_{B(0,2^j)} |f(x)|\,dx \r)\\
&\ls  \rho_{n+s,0}(\phi)  2^{-js}\lf({\sum_{i=0}^j}\frac1{{|B(0,2^i)|}}\int_{B(0,2^i)} |f(x)-f_{B(0,2^i)}|\,dx +|f_{B(0,1)}|\r)\\
&\ls \rho_{n+s,0}(\phi) j 2^{-js} \lf(\|f\|_\BMO +|f_{B(0,1)}|\r),
\end{align*}
we obtain
\begin{align*}
\lf|\int_\rn f(x)\phi(x)\,dx\r|
&\ls \rho_{n+s,0}(\phi) \lf(1+ \sum_{j=1}^\infty j 2^{-js} \r) \lf(\|f\|_\BMO +|f_{B(0,1)}|\r)\\
&\ls \rho_{n+s,0}(\phi)\lf(\|f\|_\BMO +|f_{B(0,1)}|\r),
\end{align*}
as desired.
\end{proof}

\begin{proposition}\label{prop-add}
For $s\in(0,1)$ one has the following two implications.
\begin{enumerate}
\item[\rm(i)] If $(f,\phi)\in \BMO\times\cs$, then
$$\laz \nabla^s_+ f,\,\phi\raz=\laz f,\, \nabla^s_+ \phi \raz.$$

\item[\rm(ii)] If $\vec U=(U_1,\dots, U_n)\in (L^\infty)^n$ and $\phi\in\cs$, then
$$\laz [\nabla^s_-]^\ast \vec U,\,\phi\raz= \sum_{j=1}^n\laz U_j,\, \nabla^s_j \phi \raz.$$

\end{enumerate}

\end{proposition}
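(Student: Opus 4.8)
The plan is to handle the two assertions separately: (i) comes essentially for free from the definitions, while (ii) amounts to moving each Riesz transform off of $\vec U$ and onto the test function.

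For (i), recall that Lemma~\ref{lem-add2} yields $\BMO\subset\cs_s'$, so for $f\in\BMO$ the distribution $\nabla^s_+f$ is well defined in $\cs'$ by Definition~\ref{defn1}(i) as $\phi\mapsto\laz f,\nabla^s_+\phi\raz$, $\phi\in\cs$. Since $\nabla^s_+\phi=(-\Delta)^{\frac s2}\phi\in\cs_s$ by Definition~\ref{defn0}(i), the quantity $\laz f,\nabla^s_+\phi\raz$ is precisely the $\cs_s$--$\cs_s'$ pairing of $f$ with $(-\Delta)^{\frac s2}\phi$, which, as computed in the proof of Lemma~\ref{lem-add2}, equals the absolutely convergent integral $\int_\rn f(x)(-\Delta)^{\frac s2}\phi(x)\,dx$. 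Hence $\laz\nabla^s_+f,\phi\raz=\laz f,\nabla^s_+\phi\raz$ holds by construction, and (i) is proved.

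For (ii), I would first unwind the left-hand side. Each Riesz transform $R_j$ maps $L^\infty$ boundedly into $\BMO$ (the content of the Fefferman--Stein embedding $\vec R\cdot(L^\infty)^n\subset\BMO$), so $\vec R\cdot\vec U=\sum_{j=1}^nR_jU_j\in\BMO\subset\cs_s'$ by Lemma~\ref{lem-add2}. Using the identity $[\nabla^s_-]^\ast=-\text{div}^s=-(-\Delta)^{\frac s2}\vec R\cdot$ from \S\ref{s511} and applying part (i) to the $\BMO$-function $\vec R\cdot\vec U$ gives
\begin{align*}
\laz[\nabla^s_-]^\ast\vec U,\phi\raz
=-\laz(-\Delta)^{\frac s2}(\vec R\cdot\vec U),\phi\raz
=-\laz\vec R\cdot\vec U,(-\Delta)^{\frac s2}\phi\raz
=-\sum_{j=1}^n\int_\rn R_jU_j(x)\,(-\Delta)^{\frac s2}\phi(x)\,dx.
\end{align*}
Next set $\psi=(-\Delta)^{\frac s2}\phi$. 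Then $\int_\rn\psi=\hat\psi(0)=0$, and by Lemmas~\ref{lem-add3}(iii) and \ref{lem-add1} one has $R_j\psi=R_j(-\Delta)^{\frac s2}\phi=\nabla^s_j\phi\in\cs_s\subset L^1$; consequently both $\int_\rn R_jU_j\,\psi$ and $\int_\rn U_j\,R_j\psi$ are absolutely convergent and the adjoint identity $\int_\rn R_jU_j(x)\,\psi(x)\,dx=-\int_\rn U_j(x)\,R_j\psi(x)\,dx$ holds for each $j$. Inserting it into the display above and recalling $R_j\psi=\nabla^s_j\phi$ almost everywhere, I obtain
\begin{align*}
\laz[\nabla^s_-]^\ast\vec U,\phi\raz=\sum_{j=1}^n\int_\rn U_j(x)\,\nabla^s_j\phi(x)\,dx=\sum_{j=1}^n\laz U_j,\nabla^s_j\phi\raz,
\end{align*}
which is (ii).

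The only step that is more than bookkeeping is the Riesz adjoint identity $\int R_jU_j\,\psi=-\int U_j\,R_j\psi$ for $U_j\in L^\infty$. Here $R_jU_j$ is only determined up to an additive constant, but since $\int_\rn\psi=0$ the pairing $\int R_jU_j\,\psi$ does not see that ambiguity, so the statement is meaningful. To prove it I would truncate, writing $U_j^{(N)}=U_j\,1_{B(0,N)}\in L^1\cap L^\infty\subset L^2$: for $L^2$ inputs the identity is the elementary Plancherel computation $\int R_jg\cdot h=-\int g\cdot R_jh$ (the multiplier $-i\xi_j/|\xi|$ being purely imaginary), and then one lets $N\to\infty$. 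The right-hand side passes to the limit by dominated convergence because $R_j\psi=\nabla^s_j\phi\in L^1$, and the left-hand side passes to the limit because the tail $R_j(U_j\,1_{\rn\setminus B(0,N)})$, normalized to lie in $\BMO$, is controlled when tested against the mean-zero, rapidly decreasing $\psi$ precisely by the $L^1$-smallness of $\nabla^s_j\phi$ off $B(0,N)$. This matching of normalizations — which is exactly where $\int_\rn\psi=0$ and the decay $\nabla^s_j\phi\in\cs_s$ enter — is the main technical point; everything else is a direct application of Definitions~\ref{defn0}, \ref{defn1} and \ref{defn4} together with the lemmas of \S\ref{s2}.
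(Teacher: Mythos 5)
Part (i) of your proposal is correct and is exactly the paper's argument: for $f\in\BMO\subset\cs_s'$ (Lemma \ref{lem-add2}), the asserted identity is nothing but Definition \ref{defn1}(i). In (ii) you also follow the paper's route up to the point where each term $\laz R_jU_j,\,(-\Delta)^{\frac s2}\phi\raz$ must be converted into $-\laz U_j,\,R_j(-\Delta)^{\frac s2}\phi\raz$, but your justification of that conversion has a genuine gap. You truncate $U_j^{(N)}=U_j1_{B(0,N)}\in L^2$, use Plancherel for the truncated part, and then need the tail contribution $\laz R_j\big(U_j1_{\rn\setminus B(0,N)}\big),\psi\raz$ (suitably normalized, $\psi=(-\Delta)^{\frac s2}\phi$) to vanish as $N\to\infty$. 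The reason you give --- that it is ``controlled \dots precisely by the $L^1$-smallness of $\nabla^s_j\phi$ off $B(0,N)$'' --- is circular: the only way that tail pairing is bounded by $\|U_j\|_{L^\infty}\int_{|y|>N}|\nabla^s_j\phi|\,dy$ is by applying to the tail piece exactly the adjoint identity you are trying to prove. A non-circular direct estimate does exist, but it is genuinely technical: one has to work with the modified kernel $K_j(x-y)-K_j(-y)$, use $\int_\rn\psi=0$ to fix the normalization, and then balance the logarithmic growth of the normalized tail against the mere $(1+|x|)^{-(n+s)}$ decay of $\psi$, treating the near-diagonal part only in an a.e./local-$L^2$ sense on dyadic annuli (this yields a bound of order $\|U_j\|_{L^\infty}N^{-s}$, not the $L^1$-tail of $\nabla^s_j\phi$). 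None of this is carried out in your sketch, and it is the entire content of the step you yourself identify as ``the main technical point.''

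The paper sidesteps all of this with a one-line duality argument: since $\psi=(-\Delta)^{\frac s2}\phi\in H^1$ by Lemma \ref{lem-H1}, it invokes $[H^1]^\ast=\BMO$ together with $R_j^\ast=-R_j$, i.e.\ the standard fact that $R_j$ acting on $L^\infty$ (with values in $\BMO$, modulo constants) is minus the adjoint of $R_j\colon H^1\to L^1$, so that $\laz R_jU_j,\psi\raz=-\laz U_j,R_j\psi\raz=-\laz U_j,\nabla^s_j\phi\raz$ immediately (your identification $R_j\psi=\nabla^s_j\phi$ via Lemmas \ref{lem-add3} and \ref{lem-add1} is fine). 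To repair your write-up, either cite this $H^1$--$\BMO$ duality in place of the truncation, or supply the full kernel estimate sketched above for the tail term.
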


\begin{proof}
Note that (i) follows directly from Lemma \ref{lem-add2} and {Definition \ref{defn1}(i)}.

Now we show (ii). For any $j\in\{1,2,\dots,n\}$, it is known that $R_j$ maps $L^\infty$ functions continuously into $\BMO$ and that
$$R_jU_j\in\BMO\subset\cs_s'$$
follows from Lemma \ref{lem-add2}. So, {Definition \ref{defn1}(i)} derives that
every $$(-\Delta)^{\frac s2}R_jU_j{\in\cs'}.$$
By this and the definition of $[\nabla^s_-]^\ast$, we have
$$[\nabla^s_-]^\ast \vec U=-\text{div}^s\vec U=-\sum_{j=1}^n(-\Delta)^{\frac s2}R_jU_j\in\cs'.$$
Thus, for $\phi\in\cs$ we have
\begin{align*}
\laz [\nabla^s_-]^\ast \vec U,\,\phi\raz
=-\sum_{j=1}^n\laz (-\Delta)^{\frac s2}R_jU_j,\, \phi\raz
=-\sum_{j=1}^n\laz R_jU_j,\,  (-\Delta)^{\frac s2}\phi\raz.
\end{align*}
Since $\phi\in\cs$, Lemma \ref{lem-H1} yields $$(-\Delta)^{\frac s2}\phi\in H^1.
$$
By
$$[H^1]^\ast=\BMO\ \ \&\ \ R_j^\ast=-R_j,
$$
we further obtain
$$ \laz R_jU_j,\,  (-\Delta)^{\frac s2}\phi\raz
=  \laz U_j,\,   R_j^\ast(-\Delta)^{\frac s2}\phi\raz
=- \laz U_j,\,   R_j(-\Delta)^{\frac s2}\phi\raz
=-\laz U_j,\,   \nabla^s_j\phi\raz,$$
thereby finding
$$
\laz [\nabla^s_-]^\ast \vec U,\,\phi\raz=\sum_{j=1}^n\laz U_j,\,   \nabla^s_j\phi\raz.
$$
\end{proof}

\subsection{Dualities of $H^{s,1}\ \&\ \mathring H^{s,1}_\pm$}\label{s52} This subsection is divided into two parts.

\subsubsection*{Fundamental duality}\label{s521}
Below is the expected duality law.

\begin{theorem}\label{thm2}
	Let $0<s<1\le n$, $T\in\cs'$ and $\nu$ be a nonnegative Radon measure on $\mathbb R^n$. Then:
 \begin{enumerate}
 \item[\rm (i)] $T\in [{H}^{s,1}]^\ast$ if and only if
 $$\exists\ (U_0,U_1,...,U_n)\in\big( L^\infty\big)^{1+n}\ \text{such that}\
	T=[\nabla^s_+]^\ast U_0+[\nabla^s_-]^\ast(U_1,...,U_n)\ \ \text{in}\ \ \cs'
	$$
	if and only if
	$$T\in (-\Delta)^\frac{s}{2}\BMO.$$

\item[\rm (ii)] $\||\nu\||_{n-s}<\infty$ if and only if
$$
\int_{\rn}|f|\,d\nu\lesssim [f]_{H^{s,1}}\ \ \forall\ \ f\in\cs_\infty
$$
if and only if
$$
\int_{\rn}|f|\,d\nu\lesssim [f]_{W^{s,1}}\ \ \forall\ \ f\in\cs_\infty.
$$
\item[\rm (iii)]
{$T\in [\mathring {H}^{s,1}_+]^\ast$ if and only if
$$\exists \ U_0\in L^\infty
	\ \ \text{
	such that}\ \
	T= [\nabla^s_+]^\ast U_0 \quad \text{in}\quad \cs'.
	$$}

\item[\rm (iv)] {$T\in [\mathring {H}^{s,1}_-]^\ast$ if and only if
$$\exists\
	 \vec{U}=(U_1,...,U_n)\in\big( L^\infty\big)^n
		\ \ \text{
	such that}\ \
	T=
	[\nabla^s_-]^\ast\vec{U} \quad \text{in}\quad \cs'.
	$$}
 \end{enumerate}
\end{theorem}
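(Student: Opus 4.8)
The plan is to exhibit for each of the three spaces an isometry, built into its definition, onto a dense subspace of a space whose dual is classical, and then to run a Hahn--Banach extension and translate back with the integration-by-parts identities of Proposition \ref{prop-add}. The common starting point is that for $\phi\in\cs$ one has $[\phi]_{H^{s,1}}=\|(-\Delta)^{\frac s2}\phi\|_{H^1}=\|\nabla^s_+\phi\|_{L^1}+\|\nabla^s_-\phi\|_{L^1}$ (by \eqref{e2a} and Lemma \ref{lem-add3}(iii)), so that $(-\Delta)^{\frac s2}$ sends $\cs$ isometrically into $H^1$ when $\cs$ carries $[\cdot]_{H^{s,1}}$ and into $L^1$ when it carries $[\cdot]_{H^{s,1}_+}$, while $\nabla^s_-$ sends $\cs$ isometrically into $(L^1)^n$ when it carries $[\cdot]_{H^{s,1}_-}$; and $\cs$ is dense in $H^{s,1}$ by Proposition \ref{prop-dense1} and, by construction, in $\mathring H^{s,1}_\pm$.

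For part (i), given $T\in[H^{s,1}]^\ast$, I would note that the assignment $(-\Delta)^{\frac s2}\phi\mapsto\laz T,\phi\raz$ is a well-defined (since $(-\Delta)^{\frac s2}$ is injective on $\cs$) bounded linear functional on the subspace $(-\Delta)^{\frac s2}\cs$ of $H^1$, so by Hahn--Banach and $[H^1]^\ast=\BMO$ there is $b\in\BMO$ with $\laz T,\phi\raz=\int_\rn b\,(-\Delta)^{\frac s2}\phi\,dx$ for all $\phi\in\cs$; then Lemma \ref{lem-add2} and Proposition \ref{prop-add}(i) give $T=(-\Delta)^{\frac s2}b=[\nabla^s_+]^\ast b$ in $\cs'$, i.e. $T\in(-\Delta)^{\frac s2}\BMO$, and conversely any $T=(-\Delta)^{\frac s2}b$ is $[H^{s,1}]^\ast$-bounded by $H^1$--$\BMO$ duality. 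To bridge $(-\Delta)^{\frac s2}\BMO$ and the $(1+n)$-tuple representation I would invoke the Fefferman--Stein splitting $\BMO=L^\infty+\vec R\cdot(L^\infty)^n$ of \cite{FS}: writing $b=U_0+\sum_jR_jU_j$ and using $[\nabla^s_-]^\ast(V_1,\dots,V_n)=-(-\Delta)^{\frac s2}\sum_jR_jV_j$, one gets $T=[\nabla^s_+]^\ast U_0+[\nabla^s_-]^\ast(-U_1,\dots,-U_n)$; conversely such a $T$ equals $(-\Delta)^{\frac s2}(U_0-\sum_jR_jU_j)$ with $U_0-\sum_jR_jU_j\in L^\infty+\vec R\cdot(L^\infty)^n\subset\BMO$, and it is $[H^{s,1}]^\ast$-bounded because Proposition \ref{prop-add} turns $\laz T,\phi\raz$ into $\laz U_0,\nabla^s_+\phi\raz+\sum_j\laz U_j,\nabla^s_j\phi\raz$, which is $\ls\max_j\|U_j\|_{L^\infty}[\phi]_{H^{s,1}}$.

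Parts (iii) and (iv) are the same mechanism with $(H^1,\BMO)$ replaced by $(L^1,L^\infty)$ and by $((L^1)^n,(L^\infty)^n)$: from $|\laz T,\phi\raz|\le C\|(-\Delta)^{\frac s2}\phi\|_{L^1}$ on $\cs$ (resp. $|\laz T,\phi\raz|\le C\|\nabla^s_-\phi\|_{L^1}$), Hahn--Banach produces $U_0\in L^\infty$ (resp. $\vec U=(U_1,\dots,U_n)\in(L^\infty)^n$) representing the functional $(-\Delta)^{\frac s2}\phi\mapsto\laz T,\phi\raz$ on $(-\Delta)^{\frac s2}\cs\subset L^1$ (resp. $\nabla^s_-\phi\mapsto\laz T,\phi\raz$ on $\nabla^s_-\cs\subset(L^1)^n$), and Proposition \ref{prop-add}(i) (resp. (ii)) rewrites $\laz T,\phi\raz=\laz U_0,(-\Delta)^{\frac s2}\phi\raz$ (resp. $=\sum_j\laz U_j,\nabla^s_j\phi\raz$) as $T=[\nabla^s_+]^\ast U_0$ (resp. $T=[\nabla^s_-]^\ast\vec U$) in $\cs'$; the converse implications are immediate from Proposition \ref{prop-add}, the bounds $|\laz U_0,(-\Delta)^{\frac s2}\phi\raz|\le\|U_0\|_{L^\infty}[\phi]_{H^{s,1}_+}$ and $|\sum_j\laz U_j,\nabla^s_j\phi\raz|\le\max_j\|U_j\|_{L^\infty}[\phi]_{H^{s,1}_-}$, and the density of $\cs$ in $\mathring H^{s,1}_\pm$. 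For part (ii) I would argue in a cycle: $\||\nu\||_{n-s}<\fz$ is equivalent (Frostman-type, by covering balls) to $\nu(K)\ls\Lambda^{n-s}_{(\infty)}(K)$ for compact $K$, hence by Lemma \ref{p31}(iii) to $\nu(K)\ls\text{Cap}_{H^{s,1}}(K)$; passing to open superlevel sets via inner regularity of $\nu$ and integrating, the strong capacitary inequality of Theorem \ref{thm3.1} yields $\int_\rn|f|\,d\nu\ls[f]_{H^{s,1}}$ for $f\in\cs_\infty$; since $[f]_{H^{s,1}}\ls[f]_{W^{s,1}}$ on $\cs$ by \eqref{e2a}, this gives $\int_\rn|f|\,d\nu\ls[f]_{W^{s,1}}$ on $\cs_\infty$; and finally, using the density of $\cs_\infty$ in $W^{s,1}$ and testing against mollified indicators of balls (for which $[f]_{W^{s,1}}\approx\text{Cap}_{W^{s,1}}(B(x,r))\approx r^{n-s}$) recovers $\nu(B(x,r))\ls r^{n-s}$, the $W^{s,1}$-endpoint being \cite[Theorem~1.4]{Xadv} together with \eqref{e34}.

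The main obstacle is the bookkeeping in part (i): one must check that the abstract $H^1$--$\BMO$ action of $T$ agrees with the integral pairing $\int_\rn b\,(-\Delta)^{\frac s2}\phi$ on Schwartz data so that Proposition \ref{prop-add}(i) is applicable, and that the signs and normalizing constants hidden in $[\nabla^s_\pm]^\ast$ are compatible with the Fefferman--Stein decomposition (so that the absorbed signs $U_j\mapsto-U_j$ do not spoil $L^\infty$-membership); once this is settled, parts (iii)--(iv) are routine Hahn--Banach arguments and part (ii) is a transcription of the capacitary estimates of \S\ref{s3}.
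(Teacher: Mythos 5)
Your proposal is correct in substance. Part (i) follows essentially the paper's own route (reduce to $H^1$--$\BMO$ duality through $(-\Delta)^{\frac s2}$, then invoke the Fefferman--Stein decomposition, with the sign discrepancy harmlessly absorbed into $U_j\mapsto-U_j$), but parts (ii)--(iv) genuinely diverge. For (iii) and (iv) the paper introduces the operators $A_+:u\mapsto(-\Delta)^{\frac s2}u$ and $A_-:u\mapsto\nabla^s_-u$, proves injectivity and closedness of the range, and applies Yosida's closed range theorem (plus, in (iv), a Hahn--Banach extension from $[\mathcal{R}(A_-)]^\ast$ to $(L^\infty)^n$); you instead apply Hahn--Banach directly to the functional $(-\Delta)^{\frac s2}\phi\mapsto\laz T,\phi\raz$ on the subspace $(-\Delta)^{\frac s2}\cs\subset L^1$ (resp.\ $\nabla^s_-\phi\mapsto\laz T,\phi\raz$ on $\nabla^s_-\cs\subset(L^1)^n$), which is simpler: well-definedness already follows from the bound $|\laz T,\phi\raz|\le C[\phi]_{H^{s,1}_\pm}$, no closed-range or injectivity-on-the-completion discussion is needed, and the translation back is exactly Proposition \ref{prop-add}; the price is that you obtain the representation only for the given $T$, whereas the paper's argument yields surjectivity of the adjoint with norm control. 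For (ii) the paper quotes an exterior estimate, $\int_\rn|I_sg|\,d\nu\ls\||\nu\||_{n-s}\|g\|_{H^1}$ via \cite{LX}, while you recycle the paper's own \S\ref{s3} machinery (Lemma \ref{p31}(iii), Theorem \ref{thm3.1}, layer cake plus a Frostman covering step), which is more self-contained. Two points to tidy: in (i) the identification of the abstract $H^1$--$\BMO$ pairing with the absolutely convergent integral $\int_\rn b\,(-\Delta)^{\frac s2}\phi\,dx$ on such decaying, mean-zero data is needed (you flag it; it is precisely what Lemma \ref{lem-add2} and Proposition \ref{prop-add} supply), and in the final step of (ii) ``mollified indicators of balls'' are not admissible test functions since they have nonzero integral and so do not lie in $\cs_\infty$; test instead with dilates $\phi\big(\frac{\cdot-x_0}{r}\big)$ of a fixed $\phi\in\cs_\infty$ (e.g.\ with $\hat\phi$ a nonnegative even bump supported in an annulus, normalized so $\phi\ge1$ near the origin), for which the scaling $[\phi(\frac{\cdot-x_0}{r})]_{W^{s,1}}=r^{n-s}[\phi]_{W^{s,1}}$ gives $\nu(B(x_0,cr))\ls r^{n-s}$ --- or simply lean on the cited $W^{s,1}$ trace results, as both you and the paper ultimately do.
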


\begin{proof}(i) First of all, by using the density of $\cs_\infty$ in {both $H^1$ and} $H^{s,1}$ (cf. Proposition \ref{prop-dense1}) and
the invariant of $\cs_\infty$ under $I_s$ and $(-\Delta)^\frac s2$, we have
\begin{align*}
T\in [H^{s,1}]^\ast
&\Leftrightarrow |T(f)| \ls [f]_{H^{s,1}}\;\forall\; f\in \cs_\infty\\
&\Leftrightarrow |T(I_s g)| \ls \|g\|_{H^1}\;\forall\; g=(-\Delta)^\frac s2 f\in \cs_\infty\\
&\Leftrightarrow T\circ I_s \in [H^1]^\ast.
\end{align*}
Consequently, an application of the Fefferman-Stein duality and decomposition (cf. \cite[Theorem 2 \& Theorem 3]{FS})
	$$
	[H^1]^\ast=\BMO=L^\infty+\vec{R}\cdot (L^\infty)^n,
	$$
produces some
$$
(U_0,U_1,...,U_n)\in\big( L^\infty\big)^{1+n}
$$
such that
\begin{align}\label{eq-x3}
T\in [H^{s,1}]^\ast
\Leftrightarrow T\circ I_s &= U_0+\sum_{j=1}^n R_jU_j.
\end{align}

Next, we utilize \eqref{eq-x3} to show the equivalence in (i). Let $T\in [H^{s,1}]^\ast$.
For any $\phi\in\cs$, {if we let
$$\psi=(-\Delta)^\frac s2 \phi,$$
then  Lemmas \ref{lem-x1} \& \ref{lem-H1} imply}
$$
\begin{cases}
\phi=I_s\psi\\
\psi\in\cs_s\cap H^1\\
\laz T, \phi\raz
=T(\phi)=T(I_s\psi)=\lf(T\circ I_s\r)(\psi)=\laz T\circ I_s,\psi\raz.
\end{cases}
$$
Upon applying \eqref{eq-x3},
$$R_jU_j\in\BMO\subset\cs_s',
$$
Proposition \ref{prop-add}(i) and {Definition \ref{defn1}(i)},  we arrive at
\begin{align*}
\laz T\circ I_s,\psi\raz
&= \lf\laz U_0+\sum_{j=1}^n R_jU_j, (-\Delta)^\frac s2 \phi\r\raz\\
&=\lf\laz (-\Delta)^\frac s2 U_0+\sum_{j=1}^n (-\Delta)^\frac s2 R_jU_j,  \phi\r\raz.
\end{align*}
This in turn gives
\begin{align}\label{eq-x4}
T=(-\Delta)^\frac s2 U_0+\sum_{j=1}^n (-\Delta)^\frac s2 R_jU_j=[\nabla^s_+]^\ast U_0+[\nabla^s_-]^\ast(U_1,...,U_n)\ \ \textup{in}\ \ \cs'
\end{align}
and so
$$T\in (-\Delta)^\frac s2\BMO.$$

Conversely,  we assume that
$$T\in (-\Delta)^\frac s2\BMO\ \ \text{or}\ \
\eqref{eq-x4}\ \ \text{holds for some}\ \
(U_0,U_1,...,U_n)\in\big( L^\infty\big)^{1+n}.
$$
Then, for any $\psi\in\cs_\infty$, we have
$$\phi=I_s\psi\in\cs_\infty\ \ \text{and}\ \  \psi=(-\Delta)^\frac s2 \phi,
$$
which, combined with the facts
$$U_0\in L^\infty\subset\cs_s'\ \ \&\ \ R_jU_j\in\BMO\subset\cs_s'
$$
and Definition \ref{defn1}(i), yields
\begin{align*}
\laz T\circ I_s, \psi\raz
&= (T\circ I_s)(\psi)\\
&=T(I_s\psi)\\
&=T(\phi)\\
&=\lf\laz (-\Delta)^\frac s2 U_0+\sum_{j=1}^n (-\Delta)^\frac s2 R_jU_j,\phi\r\raz\\
&=\lf\laz  U_0+\sum_{j=1}^n R_jU_j,(-\Delta)^\frac s2\phi\r\raz\\
&=\lf\laz  U_0+\sum_{j=1}^n R_jU_j, \psi\r\raz.
\end{align*}
Due to the density of $\cs_\infty$ in $H^1$ {and $[H^1]^\ast=\BMO$}, the last series of identities implies
$$
T\circ I_s= U_0+\sum_{j=1}^n R_jU_j\ \ \text{in}\ \  \BMO.
$$
Combining this and \eqref{eq-x3}  yields $$T\in [H^{s,1}]^\ast.$$

(ii) Noting that $\cs_\infty$ is dense in both $H^{s,1}$ and $W^{s,1}$ as shown in Proposition \ref{prop-dense1}, we apply \eqref{e2} to deduce  $$W^{s,1}\subset H^{s,1}\ \ \text{with}\ \ [f]_{H^{s,1}}\lesssim [f]_{W^{s,1}}.
$$ Accordingly, the desired equivalence follows from \cite[Proposition~3.2]{LX} and
\begin{equation*}\label{e41}
\int_\rn |f|\, d\nu
=\int_\rn \lf|I_s(-\Delta)^\frac s2 f\r|\,d\nu
\ls \||\nu\||_{n-s}\|(-\Delta)^\frac s2 f\|_{H^1}\approx \||\nu\||_{n-s}[f]_{H^{s,1}}\qquad \ \forall\ \ f\in\cs_\infty.
\end{equation*}

{(iii) Let $T\in\cs'$.} If
$$T=[\nabla^s_+]^\ast U_0\ \ \text{in}\ \  \cs'\ \ \text{ for some}\ \  U_0\in L^\infty,
$$
then
\begin{align*}
\laz T, \phi\raz
=\laz (-\Delta)^{\frac s2} U_0, \phi\raz
=\laz  U_0, (-\Delta)^{\frac s2}\phi\raz\qquad \ \forall\ \ \phi\in\cs,
\end{align*}
where the second equality holds thanks to $L^\infty\subset\cs_s'$ and {Definition \ref{defn1}(i)}. Thus,
\begin{align*}
|\laz T, \phi\raz| \le \|U_0\|_{L^\infty} \| (-\Delta)^{\frac s2}\phi\|_{L^1}=\|U_0\|_{L^\infty}\|\phi\|_{H^{s,1}_+} \qquad \ \forall\ \ \phi\in\cs,
\end{align*}
which implies that  $T$ induces a bounded linear functional on $\mathring H^{s,1}_+$ in terms of the density of   $\cs$ in $\mathring H^{s,1}_+$.

To obtain the converse part, assuming
$$T\in [\mathring H^{s,1}_+]^\ast,
$$
we are about to find $$U_0\in L^\infty\ \ \text{such that}\ \
T=[\nabla^s_+]^\ast U_0\ \ \text{in}\ \  \cs'.
$$
Motivated by the argument in \cite[p.\,399]{BB}, we consider the bounded linear operator
\begin{align*}
A_+:\ \   &\mathring H^{s,1}_+\to L^1\\
&u\mapsto (-\Delta)^{\frac s2}u
\end{align*}
which is actually a closed operator thanks to the definition of $\mathring H^{s,1}_+$ based on the completeness of $\mathcal{S}$ in $H^{s,1}_+$.
If $$u\in \mathring H^{s,1}_+\ \ \text{obeys}\ \ \|(-\Delta)^{\frac s2}u\|_{L^1}=0,$$
then
$$
(-\Delta)^{\frac s2}u=0\ \ \text{almost everywhere on}\ \ \rn,
$$
which implies
$$\laz u, \phi\raz=\laz u, (-\Delta)^\frac s2 I_s \phi\raz=\laz (-\Delta)^{\frac s2}u, I_s\phi\raz=0\qquad\forall\ \ \phi\in\cs_\infty,$$
that is,
$u=0$ in $\cs'/\mathcal P$, or equivalently, $u$ is a polynomial on $\rn$.
Further, any $u\in\cs_s'$ being a polynomial forces $u$ to be
a constant function on $\rn$. In other words, it holds $u=0$ in $\mathring H^{s,1}_+$.
Thus, the operator $A_+$ is injective. In the meantime, $A_+$ enjoys
$$
\|A_+u\|_{L^1}=\|(-\Delta)^\frac{s}{2}u\|_{L^1}=\|u\|_{H^{s,1}_+}\quad\forall\quad u\in \mathring H^{s,1}_+.
$$
Consequently, $A_+$ has a continuous inverse from $L^1$ to $\mathring H^{s,1}_+$. Since the definition of  $\mathring H^{s,1}_+$ (determined by the closure of $\mathcal{S}$ in $H^{s,1}_+$) ensures that $$A_+: \mathring H^{s,1}_+\to L^1$$ is a closed linear operator, the closed range theorem (see \cite[p.\,208, Corollary~1]{Y}) derives that the transpose of $A_+$
$$ A_+^\ast:\, L^\infty \to [\mathring H^{s,1}_+]^\ast,$$
defined by
$$\laz   A_+^\ast  F, u\raz=\laz  F,   A_+ u\raz\quad\forall\ \vec F\in L^\infty\,\&\, u\in \mathring H^{s,1}_+,$$
is surjective. In particular, since $$T\in [\mathring H^{s,1}_+]^\ast,
$$
we can find $$U_0\in L^\infty\ \ \text{such that}\ \
  A_+^\ast  U_0 = T.$$
Consequently, for any $u\in\cs$, we have
\begin{align*}
\laz   A_+^\ast U_0,\, u\raz=\laz U_0,   A_+ u\raz = \laz U_0,  A_+ u\raz =  \laz U_0,  (-\Delta)^\frac s2 u\raz
=\laz [\nabla^s_+]^\ast U_0,   u\raz,
\end{align*}
whence gives
$$
T=  A_+^\ast U_0 = [\nabla^s_+]^\ast U_0 \ \ \text{in}\ \ \cs'.
$$

{(iv)} Let $T\in\cs'$. If
$$
T=[\nabla^s_-]^\ast \vec U\ \ \text{in}\ \  \cs'\ \ \text{for some}\ \  \vec U=(U_1,\dots, U_n)\in (L^\infty)^n,
$$
then Proposition \ref{prop-add}(ii) implies
\begin{align*}
\laz T, \phi\raz
=\laz [\nabla^s_-]^\ast \vec U, \phi\raz
=\sum_{j=1}^n\laz (-\Delta)^\frac s2R_jU_j, \phi\raz
=\sum_{j=1}^n\laz   U_j, \nabla^s_j\phi\raz\quad \ \forall\quad\phi\in\cs,
\end{align*}
and hence
\begin{align*}
|\laz T, \phi\raz| \le \sum_{j=1}^n \|U_j\|_{L^\infty} \|\nabla^s_j\phi\|_{L^1} \quad\forall\quad\phi\in\cs.
\end{align*}
Since $\cs$ is dense in $\mathring H^{s,1}_-$, $T$ induces a bounded linear functional on $\mathring H^{s,1}_-$.

To obtain the converse part, assuming $$T\in [\mathring H^{s,1}_-]^\ast
$$
we are about to show
$$
T=[\nabla^s_-]^\ast \vec U\ \ \text{for some}\ \  \vec U\in (L^\infty)^n.
$$

To this end, we consider the bounded linear operator
\begin{align*}
A_-:\ \   &\mathring H^{s,1}_-\to (L^1)^n\\
&u\mapsto \nabla_-^su.
\end{align*}

\begin{itemize}
\item [$\rhd$] Suppose
$$u\in \mathring H^{s,1}_-\ \ \text{obeys}\ \  \nabla^s_- u=0\ \ \text{in}\ \
(L^1)^n.
$$
Since $u\in \mathring H^{s,1}_-$, it follows that  $u\in\cs_s'$.
For any $\psi\in\cs_\infty$,  the Fourier transform implies that
$$ \psi=-\sum_{j=1}^n \nabla^s_j I_s R_j\psi\ \ \text{holds with}\ \ I_s R_j\psi\in\cs_\infty\subset L^\infty,$$
thereby giving
\begin{align*}
\lf|\laz u, \psi\raz\r|
=\lf|\sum_{j=1}^n \laz u,  \nabla^s_j I_s R_j\psi\raz\r|
=\lf|\sum_{j=1}^n \laz \nabla^s_j  u,  I_s R_j\psi\raz\r|
\le \sum_{j=1}^n \|\nabla^s_j  u\|_{L^1} \|I_s R_j\psi\|_{L^\infty}
=0.
\end{align*}
This shows that
$u=0$ in $\cs'/\mathcal P$. In other words, $u$ is a polynomial on $\rn$.
However, if a polynomial $u$ is a bounded linear functional on $\cs_s$,
then $u$ must be a constant function, which  implies that $u=0$ in $\mathring H^{s,1}_-$. In other words,
$$
A_-: \mathring H^{s,1}_-\to (L^1)^n
$$
is injective.

\item[$\rhd$] This last injectiveness and the next identification
$$
\|A_- u\|_{(L^1)^n}=\|\nabla_-^s u\|_{L^1}=\|u\|_{H^{s,1}_-}\quad\forall\quad u\in\mathring H^{s,1}_-.
$$
derive that
$$A_-: \mathring H^{s,1}_-\to \mathcal{R}(A_-)=A_-(\mathring H^{s,1}_-)
$$
has a continuous inverse sending $\mathcal{R}(A_-)$ to $\mathring H^{s,1}_-$.

\item[$\rhd$] Clearly, the closure of $\mathcal{S}$ in $H^{s,1}_-$ ensures that $\mathcal{R}(A_-)$ is closed in $(L^1)^n$. So, from the closed range theorem it follows that the $A_-$'s transpose
\begin{align*}
 A_-^\ast:\ \ &[\mathcal{R}(A_-)]^\ast \to [\mathring H ^{s,1}_- ]^\ast\\
 &\vec{F}\mapsto A_-^\ast\vec{F}.
 \end{align*}
defined by
$$
\laz   A_-^\ast \vec F, \phi\raz=\laz \vec F,   A_- \phi\raz=\laz \vec F,   \nabla^s_- \phi\raz\quad\forall\quad \phi\in \mathcal{S},
$$
is surjective. Consequently, for the hypothesis
$
T\in[\mathring H^{s,1}_- ]^\ast
$
there exists
$$\vec U_o\in [\mathcal{R}(A_-)]^\ast\ \ \text{such that}\ \
  A_-^\ast \vec U_o = T\ \ \text{in}\ \ \mathcal{S}'.
  $$
  Although it is uncertain that $\vec{U}_o\in (L^\infty)^n$, we can utilize the inclusion
  $$
  \mathcal{R}(A_-)\subset (L^1)^n
  $$
  and the classical Hahn-Banach extension theorem to extend $\vec{U}_o$ to an element
  $$\vec{U}\in \big[(L^1)^n\big]^\ast=(L^\infty)^n$$
  such that
  $$
\laz   \vec U, \vec{V}\raz=\laz \vec U_o,  \vec{V}\raz\quad\forall\quad \vec{V}\in \mathcal{R}(A_-).
     $$
     Accordingly, if $\phi\in\mathcal{S}$, then
     $$
     \laz T,\phi\raz=\laz A^\ast_-\vec{U}_o,\phi\raz=     \laz   \vec U_o, \nabla^s_-\phi\raz=\laz \vec U,  \nabla^s_-\phi\raz=\laz [\nabla^s_-]^\ast\vec{U},\phi\raz,
 $$
 and hence
 $$
 T=[\nabla^s_-]^\ast\vec{U}\ \ \text{in}\ \ \mathcal{S}'.
 $$

\end{itemize}

\end{proof}

\subsubsection*{Fefferman-Stein decomposition \& Bourgain-Brezis question for John-Nirenberg space}\label{s522}
As a consequence of Theorem \ref{thm2}(iv), we surprisingly discover the coming-up-next assertion
is indeed
a resolution of the Bourgain-Brezis problem (cf. \cite[p.396]{BB}) asking for any function space $X$ between $W^{1,n}$ and $\BMO$ such that every $F\in X$ has a representation $$F=\sum_{j=1}^n R_j Y_j\ \ \text{where}\ \ (n-1,Y_j)\in\mathbb N\times L^\infty.
$$
As a subspace of the distribution space $\cs'/\mathcal P$, the above-searched space $X$ is nothing but $I_s[\mathring H^{s,1}_-]^\ast$ for every number $s\in (0,1)$. Moreover, the well known Fefferman-Stein decomposition (cf. \cite[Theorems 2\&3]{FS}) gives
$$
\BMO=L^\infty+\vec{R}\cdot\big(L^\infty\big)^{n}=L^\infty+I_s\big([\mathring H^{s,1}_-]^\ast\big)\quad\forall\quad s\in(0,1).
$$

\begin{theorem}\label{thm3}
	Let $s\in(0,1)$ and $n\in\nn$. Then
\begin{align}\label{bb-decom}
I_s[\mathring H^{s,1}_-]^\ast=\vec R\cdot(L^\infty)^n
\end{align}
in the sense of
$$f=I_s g \ \ \text{in}\ \ \cs'/\mathcal P\ \ \text{for some} \ \ g\in [\mathring H^{s,1}_-]^\ast$$
if and only if
$$
 \exists\ (Y_1,...,Y_n)\in \big(L^\infty\big)^n\ \ \text{such that}\ \
	f=\sum_{j=1}^nR_j Y_j \ \text{in}\ \cs'/\mathcal P\ \text{or}\ \BMO.
$$
Moreover, under $n\ge2$, it holds that
\begin{align}\label{embed}
W^{1,n}\subset I_s(L^{\frac ns})\subset I_s(L^{\frac ns,\infty})\subset I_s[\mathring H^{s,1}_-]^\ast \subset \BMO \ \text{in}\ \cs'/\mathcal P\ \text{or}\ \BMO.
\end{align}
As a consequence, when $n\ge2$, given any $Y_0\in I_s[\mathring H^{s,1}_-]^\ast$,
there exists $(Y_1,...,Y_n)\in \big(L^\infty\big)^n$ such that
$$\text{div}\big((-\Delta)^{-\frac12}Y_1,...,(\Delta)^{-\frac12}Y_n\big)=Y_0\ \text{in}\ \cs'/\mathcal P.$$
%
%
%
%
%
\end{theorem}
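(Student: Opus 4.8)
The plan is to obtain the equivalence \eqref{bb-decom} directly from Theorem~\ref{thm2}(iv) together with the identity $[\nabla^s_-]^\ast\vec U=-\text{div}^s\vec U=-(-\Delta)^{s/2}\sum_{j=1}^n R_jU_j$ recorded in \S\ref{s51}, then to assemble the chain \eqref{embed} out of \eqref{bb-decom} plus one Hardy--Littlewood--Sobolev estimate, and finally to read off the divergence statement from a Fourier-multiplier identity. Throughout I will be careful about whether an identity is asserted in $\cs'$, in $\cs'/\mathcal P$, or in $\BMO$, since $(-\Delta)^{s/2}$ and $I_s$ are mutually inverse only on $\cs'/\mathcal P$, where $I_s(-\Delta)^{s/2}=\mathrm{id}$.

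For \eqref{bb-decom}: if $g\in[\mathring H^{s,1}_-]^\ast$, then Theorem~\ref{thm2}(iv) furnishes $\vec U=(U_1,\dots,U_n)\in\big(L^\infty\big)^n$ with $g=[\nabla^s_-]^\ast\vec U=-\sum_{j=1}^n(-\Delta)^{s/2}R_jU_j$ in $\cs'$; applying $I_s$ and using $R_jU_j\in\BMO\subset\cs_s'$ (Lemma~\ref{lem-add2}) gives $f:=I_sg=-\sum_{j=1}^n R_jU_j$ in $\cs'/\mathcal P$, hence in $\BMO$, so $f=\sum_{j=1}^n R_jY_j$ with $Y_j=-U_j\in L^\infty$. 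Conversely, if $f=\sum_{j=1}^n R_jY_j$ with $Y_j\in L^\infty$, then $f\in\BMO\subset\cs_s'$, so $g:=(-\Delta)^{s/2}f\in\cs'$ is well defined (Definition~\ref{defn1}(i)) and, by the formula for $[\nabla^s_-]^\ast$ in \S\ref{s51}, equals $[\nabla^s_-]^\ast(-Y_1,\dots,-Y_n)$ in $\cs'$; Theorem~\ref{thm2}(iv) then puts $g\in[\mathring H^{s,1}_-]^\ast$, and $f=I_sg$ in $\cs'/\mathcal P$. This gives $I_s[\mathring H^{s,1}_-]^\ast=\vec R\cdot\big(L^\infty\big)^n$, and $\vec R\cdot\big(L^\infty\big)^n\subset\BMO$ is classical since $\vec R$ maps $\big(L^\infty\big)^n$ boundedly into $\BMO$.

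For the chain \eqref{embed} with $n\ge2$: $I_s(L^{n/s})\subset I_s(L^{n/s,\infty})$ is immediate from $L^{n/s}\subset L^{n/s,\infty}$; $I_s(L^{n/s,\infty})\subset I_s[\mathring H^{s,1}_-]^\ast$ follows from \cite[Corollary~1.5]{Sp18} (giving $I_s(L^{n/s,\infty})\subset\vec R\cdot\big(L^\infty\big)^n$) combined with \eqref{bb-decom}; and $I_s[\mathring H^{s,1}_-]^\ast\subset\BMO$ was noted above. For $W^{1,n}\subset I_s(L^{n/s})$, take $f\in W^{1,n}$ and set $g=(-\Delta)^{s/2}f\in\cs'/\mathcal P$, so $f=I_sg$ in $\cs'/\mathcal P$; factoring $(-\Delta)^{s/2}=I_{1-s}(-\Delta)^{1/2}$ and using $(-\Delta)^{1/2}f=\sum_{j=1}^n R_j\partial_{x_j}f$ up to a fixed sign, the boundedness of each $R_j$ on $L^n$ (here $n\ge2$ is used) gives $(-\Delta)^{1/2}f\in L^n$, after which the Hardy--Littlewood--Sobolev inequality for $I_{1-s}$ — valid because $1<n<\infty$, $0<1-s<n$ and $\tfrac1n-\tfrac{1-s}{n}=\tfrac sn\in(0,1)$ — yields $g=I_{1-s}\big((-\Delta)^{1/2}f\big)\in L^{n/s}$, i.e.\ $f\in I_s(L^{n/s})$ in $\cs'/\mathcal P$.

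For the divergence consequence with $n\ge2$: given $Y_0\in I_s[\mathring H^{s,1}_-]^\ast$, \eqref{bb-decom} supplies $(Z_1,\dots,Z_n)\in\big(L^\infty\big)^n$ with $Y_0=\sum_{j=1}^n R_jZ_j$ in $\cs'/\mathcal P$, and since $\partial_{x_j}(-\Delta)^{-1/2}$ coincides with $R_j$ up to a fixed sign on the Fourier side, one gets $\text{div}\big((-\Delta)^{-1/2}Y_1,\dots,(-\Delta)^{-1/2}Y_n\big)=\pm\sum_{j=1}^n R_jY_j$ in $\cs'/\mathcal P$, so choosing $Y_j=\pm Z_j\in L^\infty$ proves the claim. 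The bulk of the theorem is essentially formal once Theorem~\ref{thm2}(iv) and the Fefferman--Stein decomposition are in hand; the one genuinely analytic ingredient — and the step where $n\ge2$ is indispensable — is the endpoint inclusion $W^{1,n}\subset I_s(L^{n/s})$, because Riesz transforms are unbounded on $L^1$ and Hardy--Littlewood--Sobolev needs a source exponent strictly above $1$; the only other thing to watch is the $\cs'$ versus $\cs'/\mathcal P$ versus $\BMO$ bookkeeping flagged above.
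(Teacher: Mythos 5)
Your argument is correct, and its skeleton — \eqref{bb-decom} and the divergence identity read off from Theorem~\ref{thm2}(iv) together with the facts that $I_s(-\Delta)^{s/2}=\mathrm{id}$ on $\cs'/\mathcal P$ and that $\partial_{x_j}(-\Delta)^{-1/2}$ is $R_j$ up to a fixed sign — is exactly the paper's; indeed your explicit $\pm$ bookkeeping in the divergence step is slightly more careful than the paper's. Where you genuinely diverge is in two links of the chain \eqref{embed}. For $W^{1,n}\subset I_s(L^{n/s})$ the paper never touches Riesz transforms: it pairs $(-\Delta)^{s/2}f$ with $\psi\in\cs_\infty$, writes $(-\Delta)^{s/2}\psi=-\nabla\cdot(\nabla I_{2-s}\psi)$, integrates by parts once onto $\nabla f\in L^n$, and uses the mapping $I_{1-s}\colon L^{n/(n-s)}\to L^{n/(n-1)}$ plus duality to conclude $(-\Delta)^{s/2}f\in L^{n/s}$; you instead factor $(-\Delta)^{s/2}=I_{1-s}(-\Delta)^{1/2}$, use $(-\Delta)^{1/2}f=\sum_jR_j\partial_{x_j}f$ with $R_j$ bounded on $L^n$, and then Hardy--Littlewood--Sobolev $I_{1-s}\colon L^n\to L^{n/s}$. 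Both are valid and both pin $n\ge2$ on an exponent constraint (the paper needs the target $n/(n-1)<\infty$, you need the source $n>1$), so this is a fair alternative of comparable difficulty. For $I_s(L^{n/s,\infty})\subset I_s[\mathring H^{s,1}_-]^\ast$ you quote \cite[Corollary~1.5]{Sp18} ($I_s(L^{n/s,\infty})\subset\vec R\cdot(L^\infty)^n$) and pass through \eqref{bb-decom}; the paper instead proves the stronger dual-space inclusion $L^{n/s,\infty}\subset[\mathring H^{s,1}_-]^\ast$ directly, via the Lorentz duality $[L_{\frac n{n-s},1}]^\ast=L^{n/s,\infty}$, the weak-type H\"older inequality, and the estimate $\|g\|_{L_{\frac n{n-s},1}}\lesssim\|\nabla^s_-g\|_{L^1}$ from \cite[(1.3)]{Sp18}. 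Your shortcut is logically sound, but note two trade-offs: the paper presents \eqref{embed} as an improvement of that very corollary, so its proof deliberately rests only on the underlying $L^1$-Lorentz estimate rather than on the corollary itself; and the paper's route produces the standalone fact $L^{n/s,\infty}\subset[\mathring H^{s,1}_-]^\ast$, which your version does not yield. The remaining items (forward direction of \eqref{bb-decom}, $\vec R\cdot(L^\infty)^n\subset\BMO$, $I_s(L^{n/s})\subset I_s(L^{n/s,\infty})$, and the $\cs'$ versus $\cs'/\mathcal P$ versus $\BMO$ bookkeeping) match the paper's treatment.
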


\begin{proof}
Let us first validate $$I_s[\mathring H^{s,1}_-]^\ast\subset \vec R\cdot(L^\infty)^n$$ in \eqref{bb-decom}.
If $f=I_s g$ for some $g\in[\mathring H^{s,1}_-]^\ast$, then $$g\in\cs'\subset \cs'/\mathcal P$$ and hence
$I_sg$ is a well defined distribution in $\cs'/\mathcal P$. Applying Theorem \ref{thm2}(iv), we write
$$g=[\nabla^s_-]^\ast \vec Y\ \ \text{in}\ \ \cs'$$
for some $$\vec Y=(Y_1,\dots, Y_n)\in (L^\infty)^n.
$$ Consequently, for any $\phi\in\cs_\infty$, upon letting $\psi=I_s\phi$, we have
\begin{align*}
\laz f,\phi\raz
=\laz I_s g,\phi\raz
=\laz g, I_s\phi\raz
=\laz [\nabla^s_-]^\ast \vec Y,\, \psi\raz
=-\sum_{j=1}^n\laz  Y_j, \nabla^s_j\psi\raz
=-\sum_{j=1}^n\laz  Y_j, R_j\phi\raz
=\sum_{j=1}^n \laz R_j Y_j,\phi\raz,
\end{align*}
which gives that
$$
f=\sum_{j=1}^n R_j Y_j \quad \text{in}\quad \cs'/\mathcal P.
$$
Due to the density of $\cs_\infty$ in $H^1$ (cf. \cite{Adams}), the above identities also implies
$$
f=\sum_{j=1}^n R_j Y_j \quad \text{in}\quad [H^1]^\ast=\BMO.
$$

Now, we are about to show the part $$\vec R\cdot(L^\infty)^n \subset I_s[\mathring H^{s,1}_-]^\ast$$ in \eqref{bb-decom}.
Given any $$\vec Y=(Y_1,\dots, Y_n)\in (L^\infty)^n,$$
we utilize Theorem \ref{thm2}(iv) to derive that
$$
[\nabla^s_-]^\ast \vec Y=-\sum_{j=1}^n (-\Delta)^\frac s2 R_jY_j= g \ \ \text{in}\ \ \cs'\ \ \text{for some} \ \ g\in [\mathring H^{s,1}_-]^\ast,
$$
which in turn gives that
$$\vec R\cdot \vec Y=\sum_{j=1}^n R_jY_j=-I_sg \ \ \text{in}\ \ \cs'/\mathcal P,$$
that is,
$$
\vec R\cdot \vec Y\in I_s[\mathring H^{s,1}_-]^\ast,
$$
as desired. So, we complete the proof of \eqref{bb-decom}.

Next, we show \eqref{embed}. Due to the density of $\cs_\infty$ in $H^1$, it suffices to show its validity in $\cs'/\mathcal P$.
To do so, let us begin with verifying $$
W^{1,n}\subset I_s(L^\frac ns)\qquad\forall\ n\ge2.$$
For any $f\in W^{1,n}$, since $W^{1,n}\subset\BMO$ (cf. \cite{BB}) and $\BMO\subset\cs_s'$ (cf. Lemma \ref{lem-add2}), we know from Definition \ref{defn1}(i) that $(-\Delta)^\frac s2 f$ is a well-defined distribution in $\cs'$ and
$$\laz(-\Delta)^\frac s2 f,\psi\raz=\laz f, (-\Delta)^\frac s2 \psi\raz\qquad\forall\ \psi\in\cs.$$ 
 In particular, if $\psi\in\cs_\infty$, then the Fourier transform gives that
\begin{align*}
(-\Delta)^\frac s2 \psi= -\nabla\cdot\lf(\nabla I_{2-s}\psi\r) 
\end{align*}
holds pointwisely, so that we can use the integral by parts formula and 
the continuity of the mapping $I_{1-s}:\, L^{\frac n{n-s}}\to L^{\frac n{n-1}}$ (cf. \cite{Aduke}) to derive
\begin{align*}
\lf|\laz(-\Delta)^\frac s2 f,\psi\raz\r|
&=\lf|-\int_{\rn} f \lf(\nabla\cdot\lf(\nabla I_{2-s}\psi\r)\r)\,dx\r|\\
&=\lf |\int_\rn (\nabla f)\cdot (\nabla I_{2-s}\psi)\,dx\r|\\
&\ls\int_\rn |\nabla f| |I_{1-s}\psi|\,dx\\
&\ls \|\nabla f\|_{L^n}\|I_{1-s}\psi\|_{L^{\frac n{n-1}}}\\
&\ls \|f\|_{W^{1,n}}\|\psi\|_{L^{\frac n{n-s}}}.
\end{align*}
Further, using the density of $\cs_\infty$ in $L^{\frac n{n-s}}$ (cf. the proof of Lemma \ref{lem-add3}(iii)), we know that 
$$(-\Delta)^\frac s2 f\in [L^{\frac n{n-s}}]^\ast=L^{\frac ns}.$$
Thus, understood in the sense of distributions, it holds 
$f=I_s((-\Delta)^\frac s2 f)$ in $\cs'/\mathcal P$ and $W^{1,n}\subset I_s(L^\frac ns)$. 


Also, since
$$I_s(L^\frac ns)\subset I_s(L^{\frac ns, \infty})$$ is obvious, we secondly validate $$L^{\frac ns, \infty}\subset [\mathring H^{s,1}_-]^\ast.
$$
Upon observing that
$L^{\frac ns, \infty}$  exists as the dual of the Lorentz space
$$L_{\frac n {n-s}, 1}=\lf\{f\ \text{is measurable on }\ \rn:\, \|f\|_{L_{\frac n {n-s}, 1}}=\int_0^\infty |\{x\in\rn:\, |f(x)|>t\}|^{\frac{n-s}{n}} \,dt<\infty\r\},$$
we use the H\"older inequality for weak Lebesgue space (cf. \cite[Exercise~1.1.15]{Grafakos}) and \cite[(1.3)]{Sp18} to derive
that for any $(f, g)\in L^{\frac ns, \infty}\times\cs$
one has
\begin{align*}
|\laz f,g \raz|
&\le \|f\|_{L^{\frac ns, \infty}}\|g\|_{L_{\frac n {n-s}, 1}} \ls \|f\|_{L^{\frac ns, \infty}}\|\nabla^s_-g\|_{L^1}
\ls \|f\|_{L^{\frac ns, \infty}} [g]_{H^{s,1}_-}.
\end{align*}
Combining this and density of $\cs$ in $\mathring H^{s,1}_-$ shows that any $f\in L^{\frac ns, \infty}$ can induce a bounded linear functional on $H^{s,1}_-$. This proves $$L^{\frac ns, \infty}\subset[\mathring H^{s,1}_-]^\ast.$$

Finally, due to $$Y_0\in I_s[\mathring H^{s,1}_-]^\ast=\vec R\cdot(L^\infty)^n,$$ we can find a vector-valued function
$$
\vec{g}=(g_1,...,g_n)\in \big(L^\infty\big)^n
$$
such that
$$
Y_0=\sum_{j=1}^nR_jg_j=\nabla\cdot\big((-\Delta)^{-\frac12}\vec{g}\big)=\text{div}\Big((-\Delta)^{-\frac12}g_1,...,(-\Delta)^{-\frac12}g_n\Big)\quad \text{in}\quad \cs'/\mathcal P.
$$
\end{proof}

\noindent{\bf Acknowledgement}: We would like to thank D. Spector for his constructive comments on the original version of this paper.

\end{document}